\begin{document}

\begin{frontmatter}
\title{On Lasso refitting strategies}
\runtitle{On Lasso refitting strategies}
\begin{aug}
    \author{\fnms{Evgenii} \snm{Chzhen}\thanksref{a,e1}%
    \ead[label=e1, mark]{evgenii.chzhen@univ-paris-est.fr}},
    \author{\fnms{Mohamed} \snm{Hebiri}\thanksref{a,e2}%
    \ead[label=e2, mark]{mohamed.hebiri@u-pem.fr}},
    \and
    \author{\fnms{Joseph} \snm{Salmon}\thanksref{c, e3}%
    \ead[label=e3, mark]{joseph.salmon@umontpellier.fr}%
    }
    \runauthor{E. Chzhen et al.}
    \affiliation{Universit\'e Paris-Est and T\'el\'ecom ParisTech\\}
    \address[a]{LAMA, Universit\'e Paris-Est,
    5 Boulevard Descartes, 77420, Champs-sur-Marne, France
    \printead{e1,e2}}
    \address[c]{IMAG, Universit\'e de Montpellier, Place Eug\`ene Bataillon,
    34095 Montpellier Cedex 5,
    France\\
    \printead{e3}}
\end{aug}

\begin{abstract}
    %!TEX root = ../bernoulli/lasso_refitting.tex

A well-know drawback of $\ell_1$-penalized estimators is the systematic shrinkage of the large coefficients towards zero.
A simple remedy is to treat Lasso as a model-selection procedure and to perform
a second refitting step on the selected support.
In this work we formalize the notion of refitting and provide oracle bounds for arbitrary refitting procedures of the Lasso solution.
One of the most widely used refitting techniques which is based on Least-Squares may bring a problem of interpretability, since the signs of the refitted estimator might be flipped with respect to the original estimator.
This problem arises from the fact that the Least-Squares refitting 
considers  only the support of the Lasso solution, avoiding any information about signs or amplitudes.
To this end we define a sign consistent refitting as an arbitrary refitting procedure, preserving the signs of the first step Lasso solution and provide Oracle inequalities for such estimators.
Finally, we consider special refitting strategies: \Bregman and \Boosted.
\Bregman has a fruitful property to converge to the Sign-Least-Squares refitting (Least-Squares with sign constraints), which provides with greater interpretability.
We additionally study the \Bregman refitting in the case of orthogonal design, providing with simple intuition behind the proposed method.
\Boosted, in contrast, considers information about magnitudes of the first Lasso step and allows to develop better oracle rates for prediction.
Finally, we conduct an extensive numerical study to show advantages of one approach over others in different synthetic and semi-real scenarios.

\end{abstract}

\begin{keyword}
    \kwd{linear regression}
    \kwd{Lasso}
    \kwd{Bregman}
    \kwd{refitting}
\end{keyword}

\end{frontmatter}

%!TEX root = ../bernoulli/lasso_refitting.tex

%%%%%%%%%%%%%%%%%%%%%%%%%%%%%%%%%%%%%%%%%%%%%%%%%%%%%%%%%%%%%%%%%%%%%%%%%%%%%%%
%%%%%%%%%%%%%%%%%%%%%%%%%%%%%%%%%%%%%%%%%%%%%%%%%%%%%%%%%%%%%%%%%%%%%%%%%%%%%%%
\section{Introduction}
\label{sec:introduction}
%%%%%%%%%%%%%%%%%%%%%%%%%%%%%%%%%%%%%%%%%%%%%%%%%%%%%%%%%%%%%%%%%%%%%%%%%%%%%%%
%%%%%%%%%%%%%%%%%%%%%%%%%%%%%%%%%%%%%%%%%%%%%%%%%%%%%%%%%%%%%%%%%%%%%%%%%%%%%%%
Least absolute shrinkage and selection operator (Lasso), introduced by~\citet{Tibshirani96}, became a popular method in high dimensional statistics due to competitive numerical solvers (as it is a convex program) and fruitful statistical guaranties~\citep{Bickel_Ritov_Tsybakov09,Koltchinskii11}.
However, the shrinkage of large magnitudes towards zero, observed in practice, may affect the overall conclusion about the model.
Different remedies were proposed to overcome this affect, all of them having their advantages and disadvantages.
For instance, one may consider a non-convex penalty instead of the $\ell_1$ regularization~\citep{Fan_Li01,Zhang10,Gasso_Rakotomamonjy_Canu08}: this approach increases a computational complexity and might be not applicable in large-scale scenarios.
Another way to avoid the underestimation of the coefficients is to perform the second least-squares refitting step based on the first step Lasso solution~\citep{Belloni_Chernozhukov13,Lederer13}: such an approach brings the problem of interpretability, since the coefficients may switch signs with respect to the original Lasso solution.
A lot of theoretical and applied works are devoted to the study of least-squares refitting of an arbitrary first step estimator~\citep{Belloni_Chernozhukov13,Lederer13,Deledalle_Papadakis_Salmon15,Deledalle_Papadakis_Salmon_Vaiter16}.

Unlike such approaches we are rather interested in general refitting strategies of the Lasso estimator.
{ We provide a natural definition of what general refitting is, in the sense that we aim at reducing the data-fitting term of the original Lasso estimator. Moreover, since our initial (first step) estimator is the Lasso, our approach
allows us to use previous theoretical analysis provided for Lasso to derive guarantees for a wide class of refitting (second step) strategies.}
In~\Cref{sec:notation} we introduce notation, used throughout the article, and the basic Lasso theory is partly covered in~\Cref{subsec:lasso_theory}.
Readers who are familiar with the Lasso theory may skip~\Cref{subsec:lasso_theory} and proceed to the following sections.
\Cref{subsec:refitting_strategy}, is concerned with our theoretical framework, where we define a refitting strategy as an estimator which reaches a lower mean square error (MSE), compared to the first step Lasso solution.
For this family of refitting strategies we show that the rates for prediction are bounded by the Lasso rates plus an $\ell_1$-norm of difference between Lasso estimator and the refitted vector.
Inspired by this result we propose to use an additional information, provided by the Lasso solution, for refitting.
It leads to a least-squares refitting with constraints, to avoid an explosion of the refitted coefficients.
This estimator can be seen as \Boosted strategy~(see \Cref{sec:lasso_boosting} and more particularly~\Cref{lem:boosting_lemma}) allows us to develop better prediction bounds compared to the classical Lasso bounds.

Additionally, we propose another family of refitting strategies in~\Cref{subsec:sign_consistent_refitting_strategy}, which restricts the possibility to switch signs with respect to the first step Lasso solution in addition to lower MSE.
For every refitting strategy in this family we provide a unified oracle inequality stated in~\Cref{thm:oracle_sign_refitting_lasso} showing minimax rates under the same assumptions as oracle inequalities for Lasso.
We introduce \Bregman, which can be seen as a generalization of Bregman Iterations~\citep{Osher_Burger_Goldfarb_Xu_Yin05,osher2016sparse}, widely used method in compressed sensing settings and has a strong connection with the method proposed by~\citet{Brinkmann_Burger_Rasch_Satour16}.
Analyzing the \Bregman in case of denoising model (\Cref{sec:linear_regression_with_orthogonal_design}) we provide useful insights on the proposed method.
Additionally, we show that \Bregman is a refitting strategy converging to \SLSLasso, which can be tracked back in~\citep{Brinkmann_Burger_Rasch_Satour16}.
This estimator restricts the possibility to flip signs, minimizing MSE meanwhile.
For \Bregman, we conduct an intensive analysis in the orthogonal design case which is of independent interest and that exhibits some interesting interpretation of this method, and makes some analogies between \Bregman and well-known existing thresholding methods (such as soft/hard/firm-thresholding).

Finally, we conduct an extensive numerical study of different post-Lasso refitting strategies to show advantages of different estimators in various scenarios.
{ Let us conclude this section by summarizing our main contributions, in this paper we aim at:
\begin{itemize}
\item defining formally a refitting Lasso estimator,
\item introducing specific refitting methods that exploit particular properties, such as preserving the Lasso signs, constraining the coefficients amplitudes, etc.
\item providing oracle inequalities for particular refitting strategies, such as Bregman iteration, Sign-Least-Squares refitting and the Boosted refitting,
\item providing further understanding of the Bregman Iterations \cite{Osher_Burger_Goldfarb_Xu_Yin05}.
\end{itemize}}

%!TEX root = ../bernoulli/lasso_refitting.tex

%%%%%%%%%%%%%%%%%%%%%%%%%%%%%%%%%%%%%%%%%%%%%%%%%%%%%%%%%%%%%%%%%%%%%%%%%%%%%%%
%%%%%%%%%%%%%%%%%%%%%%%%%%%%%%%%%%%%%%%%%%%%%%%%%%%%%%%%%%%%%%%%%%%%%%%%%%%%%%%
\section{Framework and notation}
\label{sec:notation}
%%%%%%%%%%%%%%%%%%%%%%%%%%%%%%%%%%%%%%%%%%%%%%%%%%%%%%%%%%%%%%%%%%%%%%%%%%%%%%%
%%%%%%%%%%%%%%%%%%%%%%%%%%%%%%%%%%%%%%%%%%%%%%%%%%%%%%%%%%%%%%%%%%%%%%%%%%%%%%%

The standard Euclidean norm is written $\|\cdot\|_2$, the $\ell_1$-norm $\|\cdot\|_1$, and the $\ell_\infty$-norm $\|\cdot\|_\infty$.
For any integer $d \in \bbN$, we denote by $[d]$ the set $\{1, \ldots, d\}$ and by $Q^\top$ the transpose of a matrix $Q$, and $I_d \in \bbR^{d\times d}$ is the identity matrix of size $d$.
For two real numbers $a, b \in \bbR$ we defined by $a \vee b$ the maximum between $a$ and $b$.
For any vectors $a, b \in \bbR^p$ we denote by $\scalar{a}{b} = a^{\top}b$ the Euclidean inner product and by $a \odot b$ the element wise (Hadamard) product of two vectors.
Our approach is valid for a broad class of models, but to avoid digression, we study the prediction performance of the Lasso and refitting strategies only for Gaussian linear regression models with deterministic design. More specifically, we consider $n$ random observations $y_1,\ldots, y_n \in \bbR$ and fixed covariates $x_1,\dots, x_n \in \bbR^p$.
We further assume that there is a regression vector $\tbeta \in \bbR^p$ which satisfies the following relation:
\begin{equation}
    \label{eq:linear_regression_model}
    y = X\tbeta + \varepsilon,\qquad \varepsilon \sim \mathcal{N}(0, \sigma^2 I_n)\enspace,
\end{equation}
where $y = (y_1, \ldots, y_n)^\top \in \bbR^n$ is the response vector and $X = (x_1, \ldots, x_n)^\top \in \bbR^{n \times p}$ the design matrix.
We additionally assume, that the columns of $X$ are normalized in such a way that for all $j \in [p]$ we have $\normin{X_j}_2^2 = n$, where $X_j$ is $j^{th}$ column of the matrix $X$.
For any set $E \subset [p]$,  we denote by $E^c$ the complement to $E$ (\ie $E\cup E^c = [p]$) and by $X_E$ the matrix obtained from the matrix $X$ by erasing all the columns whose indexes are not in $E$.
Similarly, for any $\beta \in \bbR^p$ we write $\beta_E$ to denote the vector obtained from $\beta$ by erasing all the components whose indexes are not in $E$.
For all vectors $\beta \in \bbR^p$ we write $\supp(\beta) \subset [p]$ for the support of the vector $\beta$, \ie $\supp(\beta) = \{j \in [p]: \beta_j \neq 0\}$.
For every real-valued function $f : \bbR^p \mapsto \bbR$ we say that $g \in \bbR^p$ is a subgradient of $f$ at $x \in \bbR^p$ if $f(y) \geq f(x) + \scalar{g}{y - x}$ for all $y \in \bbR^p$.
The set of all subgradients of $f$ at $x \in \bbR^p$ is called subdifferential of $f$ at $x \in \bbR^p$ and written as $\partial f(x)$.
We also remind, that the subdifferential of the $\ell_1$-norm $\partial\norm{\cdot}_1$ is a set valued vector function $\sign(\cdot) = (\sign(\cdot)_1,\ldots,\sign(\cdot)_p)^\top$, defined element-wise by
\begin{equation}
    \label{eq:definition_signs}
    \forall\beta\in\bbR^p,\,\forall j \in [p], \quad \sign(\beta)_j = \begin{cases}
                        \{ 1 \},\quad &\beta_j > 0\\
                        \{ -1 \}, \quad &\beta_j < 0\\
                        [-1, 1],\quad &\beta_j = 0
                     \end{cases}\enspace.
\end{equation}
% In the sequel, and by abuse of notation, we write $ \sign(\beta)$ for the sign vector $ (\sign(\beta)_1,\ldots,\sign(\beta)_p)^\top$ for any $p$-dimensional vector $\beta$. 
Also, we assume that the unknown vector $\tbeta$ is sparse, \ie $\supp(\tbeta) = S$ has small cardinality $s$ compared to $n$ and $p$.
To estimate ${\beta}^*$ we first minimize the negative log-likelihood with $\ell_1$ penalty~\citep{Tibshirani96}, which is equivalent for a fixed $\lambda > 0$ to the following optimization problem
\begin{align}
    \label{eq:Lasso}
    \hbeta \in \argmin_{\beta \in \bbR^p} \frac{1}{2n}\norm{y - X\beta}_2^2 + \lambda{\norm{\beta}_1}\enspace.
\end{align}
We also remind the Karush-Kuhn-Tucker (KKT) conditions for \Cref{eq:Lasso}:
\begin{lemma}[KKT conditions for Lasso]
  \label{lem:kkt_lasso}
  The Karush-Kuhn-Tucker conditions~\citep{Boyd_Vandenberghe04} for the Lasso problem~\Cref{eq:Lasso} read as follows:
  $0 \in \lambda\sign(\hbeta) - \frac{1}{n}X^\top(y - X\hbeta),$
  or equivalently: there exists $\hrho \in \sign(\hbeta)$, such that $\label{eq:kkt_lasso_good_form}
        \frac{1}{n}X^\top(y - X\hbeta) = \lambda\hrho.$
\end{lemma}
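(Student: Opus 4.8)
The plan is to invoke the standard first-order optimality characterization for convex programs. The Lasso objective
\[
F(\beta) = \frac{1}{2n}\norm{y - X\beta}_2^2 + \lambda\norm{\beta}_1
\]
is a sum of two convex functions and hence convex, so by Fermat's rule a vector $\hbeta$ is a global minimizer of $F$ if and only if $0 \in \partial F(\hbeta)$. The whole argument therefore reduces to computing the subdifferential of $F$ at $\hbeta$.

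First I would split $F$ into its smooth part $g(\beta) = \frac{1}{2n}\norm{y - X\beta}_2^2$ and its nonsmooth part $h(\beta) = \lambda\norm{\beta}_1$. The function $g$ is differentiable everywhere, and a direct chain-rule computation gives $\nabla g(\beta) = -\frac{1}{n}X^\top(y - X\beta)$. The function $h$ is a positively scaled $\ell_1$-norm, whose subdifferential is exactly the set-valued map $\lambda\sign(\cdot)$ recalled in~\Cref{eq:definition_signs}: coordinate-wise it returns $\{+1\}$ on positive entries, $\{-1\}$ on negative entries, and the full interval $[-1,1]$ on vanishing entries.

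Because $g$ is finite and differentiable on all of $\bbR^p$, the additivity rule for subdifferentials applies and yields
\[
\partial F(\hbeta) = \nabla g(\hbeta) + \partial h(\hbeta) = -\frac{1}{n}X^\top(y - X\hbeta) + \lambda\sign(\hbeta)\enspace.
\]
Substituting this into the optimality condition $0 \in \partial F(\hbeta)$ produces the first claimed form. Finally, since $\sign(\hbeta)$ is a set, the membership $0 \in \lambda\sign(\hbeta) - \frac{1}{n}X^\top(y - X\hbeta)$ is equivalent to the existence of a selection $\hrho \in \sign(\hbeta)$ with $\frac{1}{n}X^\top(y - X\hbeta) = \lambda\hrho$, which is the second form.

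The only point deserving care is the validity of the additivity of subdifferentials, and I expect this to be the main (though mild) obstacle. It is resolved by noting that the smoothness of the quadratic data-fitting term makes the domains overlap everywhere, so the sum rule holds with equality rather than mere inclusion; no further constraint qualification is needed.
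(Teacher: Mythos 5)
Your proof is correct and is precisely the standard convex-analytic argument the paper implicitly relies on: the paper states the lemma without proof, citing Boyd and Vandenberghe, and what that citation covers is exactly your chain of Fermat's rule, the gradient computation $\nabla g(\beta) = -\frac{1}{n}X^\top(y - X\beta)$, the subdifferential $\partial(\lambda\norm{\cdot}_1)(\beta) = \lambda\sign(\beta)$, and the sum rule, whose validity you correctly justify by the everywhere-finiteness and differentiability of the quadratic term (so no further constraint qualification is needed). The passage between the set-membership form and the existential form with a selection $\hrho \in \sign(\hbeta)$ is also handled correctly, so nothing is missing.
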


%!TEX root = ../bernoulli/lasso_refitting.tex

% %%%%%%%%%%%%%%%%%%%%%%%%%%%%%%%%%%%%%%%%%%%%%%%%%%%%%%%%%%%%%%%%%%%%%%%%%%%%%%%
% %%%%%%%%%%%%%%%%%%%%%%%%%%%%%%%%%%%%%%%%%%%%%%%%%%%%%%%%%%%%%%%%%%%%%%%%%%%%%%%
% \section{Refitting and oracle inequalities}
% \label{sec:oracle_inequality_for_sls}
% %%%%%%%%%%%%%%%%%%%%%%%%%%%%%%%%%%%%%%%%%%%%%%%%%%%%%%%%%%%%%%%%%%%%%%%%%%%%%%%
% %%%%%%%%%%%%%%%%%%%%%%%%%%%%%%%%%%%%%%%%%%%%%%%%%%%%%%%%%%%%%%%%%%%%%%%%%%%%%%%

%%%%%%%%%%%%%%%%%%%%%%%%%%%%%%%%%%%%%%%%%%%%%%%%%%%%%%%%%%%%%%%%%%%%%%%%%%%%%%%
%%%%%%%%%%%%%%%%%%%%%%%%%%%%%%%%%%%%%%%%%%%%%%%%%%%%%%%%%%%%%%%%%%%%%%%%%%%%%%%
\section{Lasso theory}
\label{subsec:lasso_theory}
%%%%%%%%%%%%%%%%%%%%%%%%%%%%%%%%%%%%%%%%%%%%%%%%%%%%%%%%%%%%%%%%%%%%%%%%%%%%%%%
%%%%%%%%%%%%%%%%%%%%%%%%%%%%%%%%%%%%%%%%%%%%%%%%%%%%%%%%%%%%%%%%%%%%%%%%%%%%%%%

In this section we provide one of the classical Lasso oracle inequalities.
To this end, we introduce the restricted eigenvalue condition~\citep{Bickel_Ritov_Tsybakov09}, a widely used assumption on the design matrix $X$.

\begin{definition}[\citet{Bickel_Ritov_Tsybakov09}]
	We say that $X \in \bbR^{n\times p}$ satisfies the Restricted Eigenvalue condition RE($c_0$, $s$), where $c_0 > 0$ and $s \in [p]$, if $\exists \kappa(c_0, s) > 0$ such that for all $J \subset [p]$ with $|J| \leq s$ we have for all $\Delta \in \bbR^p$
	\begin{equation*}
		\normin{\Delta_{J^c}}_1 \leq c_0 \normin{\Delta_{J}}_1\quad \implies \quad \frac{\normin{X\Delta}_2^2}{n\normin{\Delta_J}_2^2} \geq \kappa^2(c_0, s)\enspace.
	\end{equation*}
\end{definition}
We also state below some classical concentration bound on tail of sup of Gaussian random variables.
\begin{lemma}
	\label{lem:union_bound}
	Let $\varepsilon \sim \mathcal{N}(0, \sigma^2I_n)$ and $X \in \bbR^{n \times p}$ be such that $\forall j \in [p]$ we have $\normin{X_j}_2^2 = n$, hence with probability at least $1 - \delta$ we have
	\begin{align*}
		\norm{X^\top \varepsilon / n}_{\infty} \leq {\lambda} / 2\enspace,
	\end{align*}
	where $\lambda = 2\sigma\sqrt{{2\log{(p/\delta)}}/{n}}$.
\end{lemma}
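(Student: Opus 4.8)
The plan is to reduce the statement to a one-dimensional Gaussian tail estimate applied coordinatewise, followed by a union bound. First I would observe that for each fixed $j \in [p]$ the scalar $X_j^\top \varepsilon / n$ is a linear functional of the Gaussian noise vector $\varepsilon \sim \mathcal{N}(0, \sigma^2 I_n)$ and is therefore itself a centered Gaussian; its variance equals $\sigma^2 \normin{X_j}_2^2 / n^2 = \sigma^2/n$, where the final equality is exactly the column-normalization hypothesis $\normin{X_j}_2^2 = n$. Hence $X_j^\top \varepsilon / n \sim \mathcal{N}(0, \sigma^2/n)$ for every $j$.

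Second, I would apply the classical Gaussian tail bound: for $W \sim \mathcal{N}(0, \sigma^2/n)$ and $t > 0$ one has $\mathbb{P}(|W| > t) \le 2\exp(-nt^2/(2\sigma^2))$. Choosing the threshold $t = \lambda/2 = \sigma\sqrt{2\log(p/\delta)/n}$ makes the exponent equal to $-\log(p/\delta)$, so that $\mathbb{P}(|X_j^\top \varepsilon/n| > \lambda/2)$ is controlled, up to constants, by $\delta/p$ for each $j$.

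Third, summing these coordinatewise bounds via the union bound gives
\[
\mathbb{P}\left(\norm{X^\top \varepsilon/n}_\infty > \lambda/2\right) \le \sum_{j=1}^p \mathbb{P}\left(|X_j^\top\varepsilon/n| > \lambda/2\right) \le \delta,
\]
and taking complements yields the claimed statement that $\norm{X^\top\varepsilon/n}_\infty \le \lambda/2$ with probability at least $1-\delta$.

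The mathematical content is elementary, so the only genuine obstacle is the bookkeeping of the constant: the crude two-sided bound $2\exp(-nt^2/(2\sigma^2))$ leaves a spurious factor $2$, giving probability $1-2\delta$ rather than $1-\delta$. To recover the exact constant as stated, I would either replace the crude tail by the sharper Mills-ratio estimate $\mathbb{P}(W>t) \le \frac{\sigma}{t\sqrt{2\pi n}}\exp(-nt^2/(2\sigma^2))$, whose prefactor is bounded by a constant strictly less than one as soon as $\log(p/\delta)$ exceeds a small absolute threshold, or equivalently absorb the factor $2$ into the logarithm by reading the union bound with $2p/\delta$ in place of $p/\delta$. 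Either route closes the argument.
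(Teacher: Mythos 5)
Your argument is correct in substance and follows the only natural route --- a coordinatewise variance computation giving $X_j^\top\varepsilon/n \sim \mathcal{N}(0,\sigma^2/n)$, a scalar Gaussian tail bound, and a union bound over $j \in [p]$; note that the paper itself states this lemma without proof, as a classical fact, and the standard references it cites prove it exactly this way. Two remarks on your handling of the factor $2$. First, your second suggested fix (reading the union bound with $2p/\delta$ in place of $p/\delta$) does not prove the lemma as stated: it proves a variant with a redefined $\lambda = 2\sigma\sqrt{2\log(2p/\delta)/n}$. So only your Mills-ratio route actually closes the stated claim, and it does so only under the side condition $\log(p/\delta) \geq 1/\pi$ that you correctly flag --- harmless whenever $p \geq 2$, but the statement formally allows $p=1$ with $\delta$ near $1$. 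Second, the side condition can be removed entirely: for a standard Gaussian $Z$ one has $\mathbb{P}(Z > z) \leq \tfrac{1}{2}e^{-z^2/2}$ for \emph{all} $z \geq 0$ (the difference of the two sides vanishes at $z=0$ and at infinity, and its derivative changes sign exactly once, so it is nonnegative throughout), whence $\mathbb{P}(|Z| > z) \leq e^{-z^2/2}$ with no spurious prefactor. Applying this with $z = \sqrt{2\log(p/\delta)}$ gives $\mathbb{P}\bigl(|X_j^\top\varepsilon/n| > \lambda/2\bigr) \leq \delta/p$ for every $j$, every $p \geq 1$ and every $\delta \in (0,1)$, and the union bound then yields probability exactly $1-\delta$ as claimed.
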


The following theorem is a starting point of our analysis. Therefore, and for the sake of completeness we state its proof in the Appendix. We mention that similar techniques of the proof can be found in~\citep{Giraud_stat_book14,Dalalyan_Hebiri_Lederer17}.

\begin{theorem}
	\label{thm:oracle_lasso}
	If the design matrix $X \in \bbR^{n \times p}$ satisfies the restricted eigenvalue condition RE($3$, $s$) and ${\lambda} = 2\sigma\sqrt{{2\log{(p/\delta)}}/{n}}$ for every $\delta \in (0, 1)$, then with probability $1 - \delta$ the following bound holds
	\begin{equation*}
			\frac{1}{n}\normin{X(\beta^* - \hbeta)}_2^2 \leq \frac{9\lambda^2 s}{4 \kappa^2(3, s)}\enspace,
	\end{equation*}
	where $\hbeta$ is a Lasso solution with tuning parameter $\lambda$.
\end{theorem}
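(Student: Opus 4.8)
The plan is to run the standard ``basic inequality'' argument for the Lasso, controlling the stochastic term on the favourable event supplied by \Cref{lem:union_bound}. Write $\Delta = \hbeta - \beta^*$ and $S = \supp(\beta^*)$, so that $|S| = s$ and $\Delta_{S^c} = \hbeta_{S^c}$. First I would exploit the optimality of $\hbeta$ in \Cref{eq:Lasso}: since $\beta^*$ is feasible, $\frac{1}{2n}\normin{y - X\hbeta}_2^2 + \lambda\normin{\hbeta}_1 \leq \frac{1}{2n}\normin{y - X\beta^*}_2^2 + \lambda\normin{\beta^*}_1$. Substituting $y = X\beta^* + \varepsilon$, expanding the squares and cancelling the common $\normin{\varepsilon}_2^2$ term yields
\begin{equation*}
	\frac{1}{2n}\normin{X\Delta}_2^2 \leq \frac{1}{n}\scalar{X^\top\varepsilon}{\Delta} + \lambda\big(\normin{\beta^*}_1 - \normin{\hbeta}_1\big)\enspace.
\end{equation*}

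Next I would control the two terms on the right. For the stochastic part, H\"older's inequality gives $\tfrac{1}{n}\scalar{X^\top\varepsilon}{\Delta} \leq \normin{X^\top\varepsilon/n}_\infty\normin{\Delta}_1$, and on the event of \Cref{lem:union_bound}, which holds with probability at least $1-\delta$ for the prescribed $\lambda$, this is at most $\tfrac{\lambda}{2}\normin{\Delta}_1$. For the penalty part, using $\beta^*_{S^c} = 0$ together with the triangle inequality gives $\normin{\beta^*}_1 - \normin{\hbeta}_1 \leq \normin{\Delta_S}_1 - \normin{\Delta_{S^c}}_1$. Splitting $\normin{\Delta}_1 = \normin{\Delta_S}_1 + \normin{\Delta_{S^c}}_1$ and combining everything, the inequality reduces to $\frac{1}{2n}\normin{X\Delta}_2^2 \leq \tfrac{3\lambda}{2}\normin{\Delta_S}_1 - \tfrac{\lambda}{2}\normin{\Delta_{S^c}}_1$.

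From here the argument closes by activating the restricted eigenvalue condition. Since the left-hand side is nonnegative, the last bound forces the cone constraint $\normin{\Delta_{S^c}}_1 \leq 3\normin{\Delta_S}_1$, which is exactly the hypothesis needed to invoke RE($3$, $s$) with $J = S$; this yields $\normin{\Delta_S}_2^2 \leq \normin{X\Delta}_2^2/(n\kappa^2(3,s))$. Dropping the nonpositive term and applying Cauchy--Schwarz $\normin{\Delta_S}_1 \leq \sqrt{s}\,\normin{\Delta_S}_2$, I would obtain $\frac{1}{2n}\normin{X\Delta}_2^2 \leq \tfrac{3\lambda}{2}\sqrt{s}\,\normin{X\Delta}_2/(\sqrt{n}\,\kappa(3,s))$, a quadratic inequality in the single quantity $\normin{X\Delta}_2/\sqrt{n}$. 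Solving it and squaring delivers the claimed bound on $\tfrac1n\normin{X(\beta^* - \hbeta)}_2^2$.

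The conceptual steps are all routine; the one place demanding care is the matching between the cone constant produced by the noise bound and the parameter of the restricted eigenvalue condition. Because I bound $\normin{X^\top\varepsilon/n}_\infty$ by $\lambda/2$ (half of $\lambda$), the penalty cancellation produces precisely the factor $3$ in the cone $\normin{\Delta_{S^c}}_1 \leq 3\normin{\Delta_S}_1$, so RE($3$, $s$) --- and not some other RE($c_0$, $s$) --- is the right assumption; a different split of $\lambda$ would change $c_0$ and hence the admissible design class. The remaining subtlety is purely the bookkeeping of multiplicative constants through the final quadratic step, and all of this is understood conditionally on the probability-$(1-\delta)$ event of \Cref{lem:union_bound}.
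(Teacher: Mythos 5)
There is a genuine gap, and it sits exactly at the point you dismiss as ``bookkeeping of multiplicative constants'': your argument does not deliver the stated constant. Following your steps, after the cone constraint and RE($3$,$s$) you reach $\frac{1}{2n}\normin{X\Delta}_2^2 \leq \frac{3\lambda\sqrt{s}}{2\kappa(3,s)}\cdot\frac{\normin{X\Delta}_2}{\sqrt{n}}$; writing $x = \normin{X\Delta}_2/\sqrt{n}$ this is $x^2/2 \leq \frac{3\lambda\sqrt{s}}{2\kappa(3,s)}\,x$, whence $x \leq 3\lambda\sqrt{s}/\kappa(3,s)$ and $\frac{1}{n}\normin{X\Delta}_2^2 \leq \frac{9\lambda^2 s}{\kappa^2(3,s)}$ --- four times larger than the bound $\frac{9\lambda^2 s}{4\kappa^2(3,s)}$ claimed in the theorem. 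The loss is structural, not cosmetic: comparing objective values at $\hbeta$ and $\beta^*$ produces the quadratic term with coefficient $\tfrac12$, namely $\frac{1}{2n}\normin{X\Delta}_2^2 \leq \frac{1}{n}\scalar{X^\top\varepsilon}{\Delta} + \lambda(\normin{\beta^*}_1 - \normin{\hbeta}_1)$, and within that setup no rearrangement of the final quadratic step (e.g.\ splitting $bx \leq tx^2/2 + b^2/(2t)$ instead of cancelling one power of $x$) recovers more than $x^2 \leq 9\lambda^2 s/\kappa^2(3,s)$, since one has no usable lower bound on the discarded term $\normin{\Delta_{S^c}}_1$.

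The paper avoids this by starting from the KKT conditions (\Cref{lem:kkt_lasso}) rather than from the comparison of objective values: pairing $\frac{1}{n}X^\top(y - X\hbeta) = \lambda\hrho$ with $\beta^* - \hbeta$ and using $\scalar{\hrho}{\hbeta} = \normin{\hbeta}_1$ together with $\scalar{\hrho}{\beta^*} \leq \normin{\beta^*}_1$ yields $\frac{1}{n}\normin{X\Delta}_2^2 \leq \frac{1}{n}\scalar{X^\top\varepsilon}{\Delta} + \lambda(\normin{\beta^*}_1 - \normin{\hbeta}_1)$, with coefficient $1$ on the quadratic term. (The paper phrases this through the polarization identity $2\scalar{u}{v} = \normin{u}_2^2 + \normin{v}_2^2 - \normin{u-v}_2^2$ so as to carry an extra copy of $\normin{X(\beta-\hbeta)}_2^2$ on the left and obtain a full oracle inequality over all $\beta$ and $|J|\leq s$; specialized to $\beta = \beta^*$ it amounts to exactly this factor-of-two strengthening.) Running your own cone, RE and Cauchy--Schwarz steps from this stronger starting point gives $x^2 \leq \frac{3\lambda\sqrt{s}}{2\kappa(3,s)}\,x$, hence $x \leq \frac{3\lambda\sqrt{s}}{2\kappa(3,s)}$ and the claimed $\frac{9\lambda^2 s}{4\kappa^2(3,s)}$. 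Everything else in your proposal --- the event from \Cref{lem:union_bound}, the penalty decomposition, and the matching of the cone constant $3$ with RE($3$,$s$) --- is correct and coincides with the paper's reasoning; only the first inequality must be upgraded from the basic inequality to the first-order optimality condition (equivalently, apply your basic inequality at $\beta_t = t\beta^* + (1-t)\hbeta$ and let $t \downarrow 0$).
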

\begin{proof} See supplementary materials for the proof.
\end{proof}

%%%%%%%%%%%%%%%%%%%%%%%%%%%%%%%%%%%%%%%%%%%%%%%%%%%%%%%%%%%%%%%%%%%%%%%%%%%%%%%
%%%%%%%%%%%%%%%%%%%%%%%%%%%%%%%%%%%%%%%%%%%%%%%%%%%%%%%%%%%%%%%%%%%%%%%%%%%%%%%
\section{Refitting strategies}
\label{subsec:refitting_strategy}
%%%%%%%%%%%%%%%%%%%%%%%%%%%%%%%%%%%%%%%%%%%%%%%%%%%%%%%%%%%%%%%%%%%%%%%%%%%%%%%
%%%%%%%%%%%%%%%%%%%%%%%%%%%%%%%%%%%%%%%%%%%%%%%%%%%%%%%%%%%%%%%%%%%%%%%%%%%%%%%
Underestimation of large coefficients by Lasso and other $\ell_1$-penalized estimators have long been well known by practitioners, and simple remedies
have been proposed on case by case analysis.
One of such approaches is Least-Squares refitting - widely used in high-dimensional regression to reduce the bias of the coefficients and consists in performing a least-squares re-estimation of the non-zero coefficients of the solution.
Such a procedure is theoretically analyzed by~\citet{Belloni_Chernozhukov13} applied to an arbitrary first-step estimator.
\Citet{Lederer13}, showed that blind Least-Squares refitting of the Lasso solution is not advised in all possible scenarios and developed a refitting criteria.
\Citet{Deledalle_Papadakis_Salmon15,Deledalle_Papadakis_Salmon_Vaiter16} are mostly concerned with the practical aspects of refitting and provide efficient numerical procedures to perform refitting simultaneously along with the original estimator.
In contrast, here we are interested in arbitrary refitting of the Lasso solution, which allows to exploit Lasso theory to provide new insights.
In this section we study a general framework for refitting techniques.
We define a refitting of a Lasso solution $\hbeta$ as:

\begin{definition}
	\label{def:refititng_vector}
	Let $\hbeta = \hbeta(\lambda)$ be a Lasso solution with regularization $\lambda$. We call a vector $\bbeta = \bbeta(\hat\beta, X, y)$ a refitting of the Lasso $\hbeta$ if it reduces the original loss function, namely if:
	\begin{align}
	    \normin{y - X\bbeta}_2 &\leq \normin{y - X\hbeta}_2\enspace.
	\end{align}
\end{definition}
{Several known refitting strategies are falling inside of this framework, for instance, Least-Squares Lasso and Relaxed Lasso~\citep{MEINSHAUSEN07}.
Relaxed Lasso was introduced by~\citet{MEINSHAUSEN07} and is defined for any positive $\lambda$ and $\phi \in [0, 1]$ as a solution to the following convex problem:
\begin{align}
    \label{eq:relaxed_lasso}
    \bbeta^{\lambda, \phi} \in \argmin_{\beta : \supp(\beta) \subset \supp(\hbeta^{\lambda})} \frac{1}{2n}\norm{y - X\beta}_2^2 + \phi\lambda{\norm{\beta}_1}\enspace,
\end{align}
where $\hbeta^{\lambda}$ is the first step Lasso solution with parameter $\lambda$.
To see that the relaxed Lasso in~\cref{eq:relaxed_lasso} is a refitting strategy in the sense of~\Cref{def:refititng_vector} it is sufficient to notice that:
\begin{align*}
    \frac{1}{2n}\norm{y - X\bbeta^{\lambda, \phi}}_2^2 + \phi\lambda{\norm{\bbeta^{\lambda, \phi}}_1} &\leq \frac{1}{2n}\norm{y - X\hbeta^{\lambda}}_2^2 + \phi\lambda{\norm{\hbeta^{\lambda}}_1}\enspace;\\
    \frac{1}{2n}\norm{y - X\hbeta^{\lambda}}_2^2 + \lambda{\norm{\hbeta^{\lambda}}_1} &\leq  \frac{1}{2n}\norm{y - X\bbeta^{\lambda, \phi}}_2^2 + \lambda{\norm{\bbeta^{\lambda, \phi}}_1}\enspace.
\end{align*}
Summing these inequalities and using the fact that $\lambda - \lambda\phi \ge 0$ we obtain the required condition in~\Cref{def:refititng_vector}.}

We emphasize, that one should not expect a superior performance of a refitting from ~\Cref{def:refititng_vector}, since the only information available for $\bbeta$ is the mean square error.
Additionally notice, that the least-squares solution is obviously a refitting strategy for every Lasso solution, which justifies the previous remark.
However, defining and analyzing such a family provides with interesting insights and serves as the step towards more thoughtful refitting strategies which are discussed in~\Cref{sec:lasso_boosting}.
\begin{theorem}
	\label{thm:oracle_refitting_lasso}
	For ${\lambda} = 2\sigma\sqrt{{2\log{(p/\delta)}}/{n}}$, where $\delta \in (0, 1)$ is arbitrary, with probability $1 - \delta$ the following bound holds
	\begin{equation*}
			\frac{1}{n}\normin{X(\beta^* - \bbeta)}_2^2 \leq \frac{1}{n}\normin{X(\tbeta - \hbeta)}_2^2 + {\lambda}\normin{\bbeta - \hbeta}_1\enspace,
	\end{equation*}
	where $\bbeta$ is a refitting of Lasso solution $\hbeta$.
\end{theorem}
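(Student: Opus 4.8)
The plan is to reduce everything to the defining relation $y = X\tbeta + \varepsilon$ and then control the single random term that survives. First I would expand both squared residuals around the truth: since $y - X\bbeta = X(\tbeta - \bbeta) + \varepsilon$ and likewise $y - X\hbeta = X(\tbeta - \hbeta) + \varepsilon$, expanding the squares writes each residual as a deterministic prediction-error term, a noise cross term, and the common $\normin{\varepsilon}_2^2$. Squaring the defining inequality $\normin{y - X\bbeta}_2 \le \normin{y - X\hbeta}_2$ of~\Cref{def:refititng_vector} preserves it, so subtracting the two expansions cancels $\normin{\varepsilon}_2^2$ and leaves
\begin{equation*}
\normin{X(\tbeta - \bbeta)}_2^2 \le \normin{X(\tbeta - \hbeta)}_2^2 + 2\scalar{\varepsilon}{X(\bbeta - \hbeta)}\enspace,
\end{equation*}
where the two noise cross terms have combined into a single inner product against the \emph{difference} $\bbeta - \hbeta$ of the two estimators.

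The remaining task is to bound the random term $2\scalar{\varepsilon}{X(\bbeta - \hbeta)}$. Transferring the noise onto the design, $\scalar{\varepsilon}{X(\bbeta - \hbeta)} = \scalar{X^\top\varepsilon}{\bbeta - \hbeta}$, and applying H\"older's inequality (the $\ell_1/\ell_\infty$ duality) gives $\scalar{X^\top\varepsilon}{\bbeta - \hbeta} \le \normin{X^\top\varepsilon}_\infty \normin{\bbeta - \hbeta}_1$. On the event of~\Cref{lem:union_bound}, which holds with probability at least $1 - \delta$ for the stated choice $\lambda = 2\sigma\sqrt{2\log(p/\delta)/n}$, we have $\normin{X^\top\varepsilon}_\infty \le n\lambda/2$, so the cross term is at most $n\lambda\normin{\bbeta - \hbeta}_1$. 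Substituting and dividing by $n$ then produces exactly the claimed inequality (with $\beta^* = \tbeta$).

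I do not expect a genuine obstacle here: the argument is essentially the standard ``basic inequality'' manipulation used for the Lasso itself, but with the suboptimality now coming from~\Cref{def:refititng_vector} rather than from a KKT/penalty comparison. The one point deserving care is that~\Cref{def:refititng_vector} only controls the data-fitting term and supplies no penalty structure, so — unlike in the Lasso proof — I cannot invoke any cone condition or the restricted eigenvalue constant to absorb the $\ell_1$ term. This is precisely why the bound retains the residual $\lambda\normin{\bbeta - \hbeta}_1$ instead of converting it into an $s\lambda^2$-type rate, and it signals that the refitting error can be tamed only once further structure (preserving signs or amplitudes, as developed in the subsequent sections) is imposed on $\bbeta$.
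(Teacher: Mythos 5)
Your proof is correct and is essentially identical to the paper's own argument: the paper likewise substitutes $y = X\tbeta + \varepsilon$ into the squared inequality of \Cref{def:refititng_vector}, cancels $\normin{\varepsilon}_2^2$ to isolate the cross term $\tfrac{2}{n}\scalar{X^\top\varepsilon}{\bbeta - \hbeta}$, and bounds it by H\"older's inequality on the event $\{\normin{X^\top\varepsilon/n}_\infty \leq \lambda/2\}$ of \Cref{lem:union_bound}. Your factor bookkeeping ($2 \cdot n\lambda/2 = n\lambda$, then dividing by $n$) is exact, and your closing observation that no cone or restricted eigenvalue structure is available to absorb the $\lambda\normin{\bbeta - \hbeta}_1$ term accurately reflects why the paper leaves the bound in this form.
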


Previous theorem shows, that by controlling the $\ell_1$-distance between refitting and Lasso solution, one might obtain satisfying performance, we discuss this idea in~\Cref{sec:lasso_boosting}.
{ In what follows, we consider specific refitting methods in order to refine prediction error bounds.
First, we consider the sign consistent refitting family that shares the same sign vector as the initial Lasso estimator.
Then, we develop the Boosted Lasso, a refitting method that aims at reducing the $\ell_1$ distance with the initial Lasso estimator.
Finally, we introduce the Bregman Lasso, which somehow generalizes the Bregman Iterations \cite{Osher_Burger_Goldfarb_Xu_Yin05} by adding some flexibility on the tuning parameters.
We also highlight its connections with the Sign-Least-Squares Lasso, a particular sign consistent refitting strategy.}

%%%%%%%%%%%%%%%%%%%%%%%%%%%%%%%%%%%%%%%%%%%%%%%%%%%%%%%%%%%%%%%%%%%%%%%%%%%%%%%
%%%%%%%%%%%%%%%%%%%%%%%%%%%%%%%%%%%%%%%%%%%%%%%%%%%%%%%%%%%%%%%%%%%%%%%%%%%%%%%
\subsection{Sign consistent refitting strategies}
\label{subsec:sign_consistent_refitting_strategy}
%%%%%%%%%%%%%%%%%%%%%%%%%%%%%%%%%%%%%%%%%%%%%%%%%%%%%%%%%%%%%%%%%%%%%%%%%%%%%%%
%%%%%%%%%%%%%%%%%%%%%%%%%%%%%%%%%%%%%%%%%%%%%%%%%%%%%%%%%%%%%%%%%%%%%%%%%%%%%%%

Previous section is concerned with an arbitrary refitting strategy, which only uses the information about the mean square error of the Lasso solution.
Here, we are interested in a more sophisticated family of refitting strategies, which additionally exploits the information provided by the sign of the Lasso solution.
Such an approach has some similarities with the methods introduced by~\Citet{Brinkmann_Burger_Rasch_Satour16}, even thought the authors had a different motivation.

\begin{definition}
	\label{def:sign_refitting_vector}
	Let $\hbeta = \hbeta(\lambda)$ be a Lasso solution with regularization $\lambda$. We call a $\bbeta = \bbeta(\hat\beta, X, y)$ sign-consistent refitting of the Lasso solution $\hbeta$ if
	\begin{align}
			\label{eq:refitting}
			\normin{y - X\bbeta}_2 \leq \normin{y - X\hbeta}_2\quad&\text{(refitting)}\enspace,\\
			\label{eq:sign_refitting}
			\frac{1}{n}X^\top(y - X\hbeta) \in \lambda\sign(\bbeta)\quad&\text{(sign-consistency)}\enspace.
	\end{align}
\end{definition}
\begin{remark}
	\label{rem:sign_refitting}
	The sign consistency property in~\Cref{def:sign_refitting_vector} ensures that a sign-consistent refitting vector $\bbeta$ satisfies for all $j \in [p]$ the following conditions
	\begin{itemize}
		\item if $n^{-1}X^\top_j(y - X\hbeta) = \lambda$, hence $\bbeta_j \geq 0$;
		\item if $n^{-1}X^\top_j(y - X\hbeta) = -\lambda$, hence $\bbeta_j \leq 0$;
		\item if $n^{-1} |X^\top_j(y - X\hbeta)| < \lambda$, hence $\bbeta_j = 0$;
	\end{itemize}
	% We will call the set $E = \{j \in [p]: X^\top_j(y - X\hbeta)| = n\lambda\}$ as equicorrelation set, following~\citep{Tibshirani13}.
\end{remark}
Notice, that the equations in the previous remark are exactly the first-order optimality conditions for the Lasso problem written component-wise.
The next theorem shows that the definition of the sign-refitting allows to develop oracle rates without any additional assumptions, except classical ones used for Lasso bounds.

\begin{theorem}
	\label{thm:oracle_sign_refitting_lasso}
	If the design matrix $X \in \bbR^{n \times p}$ satisfies the restricted eigenvalue condition RE($3$, $s$) and ${\lambda} = 6\sigma\sqrt{{2\log{(p/\delta)}}/{n}}$ for some $\delta \in (0 , 1)$, then with probability $1 - \delta$ the following bound holds
	\begin{align*}
	    \frac{1}{n}\normin{X(\tbeta - \bbeta)}_2^2 + \frac{1}{n}\normin{X(\tbeta - \hbeta)}_2^2 \leq \lambda^2s\Big(\frac{9}{\kappa^2(3, s)} + \frac{16}{\kappa^2(1, s)}\Big)\enspace,
	\end{align*}
	where $\bbeta$ is a sign-consistent refitting of Lasso solution $\hbeta$.
\end{theorem}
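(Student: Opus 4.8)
The plan is to work throughout on the event $\mathcal{A} = \{\normin{X^\top\varepsilon/n}_\infty \leq \lambda/6\}$, which has probability at least $1-\delta$: since $\lambda = 6\sigma\sqrt{2\log(p/\delta)/n}$ is three times the tuning parameter of \Cref{lem:union_bound}, that lemma applied with $\lambda_0 = \lambda/3$ gives $\normin{X^\top\varepsilon/n}_\infty \leq \lambda_0/2 = \lambda/6$. I would write $u = \tbeta - \hbeta$, $v = \tbeta - \bbeta$, the correlation vector $\hat g = \tfrac1n X^\top(y - X\hbeta)$ (so $\hat g = \lambda\hrho$ by \Cref{lem:kkt_lasso}), and $L = \tfrac1n\normin{Xu}_2^2$, $R = \tfrac1n\normin{Xv}_2^2$. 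The first summand is exactly the Lasso term: using only the weaker bound $\normin{X^\top\varepsilon/n}_\infty \leq \lambda/2$, the argument of \Cref{thm:oracle_lasso} gives the cone inequality $\normin{u_{S^c}}_1 \leq 3\normin{u_S}_1$ and then, via RE($3,s$), $L \leq 9\lambda^2 s/\kappa^2(3,s)$. It remains to show $R \leq 16\lambda^2 s/\kappa^2(1,s)$ and add.

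For the refitting term, the crux is the sign-consistency condition~\Cref{eq:sign_refitting}, which says precisely that $\hat g$ is a subgradient of $\lambda\norm{\cdot}_1$ at $\bbeta$ (as it is, by \Cref{lem:kkt_lasso}, at $\hbeta$). First I would record its two consequences: the identity $\scalar{\hat g}{\bbeta} = \lambda\norm{\bbeta}_1$, and — splitting $\scalar{\hat g}{\bbeta - \tbeta}$ over $S$ and $S^c$ as in the Lasso proof, using $\norm{\hat g}_\infty \leq \lambda$ on $S$ and the saturation $\hat g_j \in \lambda\sign(\bbeta_j)$ on $S^c$ — the lower bound $\scalar{\hat g}{\bbeta - \tbeta} \geq \lambda\normin{v_{S^c}}_1 - \lambda\normin{v_S}_1$. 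On the other hand, substituting $y = X\tbeta + \varepsilon$ yields the exact expansion $\scalar{\hat g}{\bbeta - \tbeta} = -\tfrac1n\scalar{Xu}{Xv} - \tfrac1n\scalar{\varepsilon}{Xv}$. Combining the two and bounding $|\tfrac1n\scalar{\varepsilon}{Xv}| \leq \tfrac\lambda6\norm{v}_1$ produces an inequality relating $\normin{v_{S^c}}_1$, $\normin{v_S}_1$ and the cross term $\tfrac1n\scalar{Xu}{Xv}$.

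The main obstacle is exactly this cross term $\tfrac1n\scalar{Xu}{Xv}$, which couples the refitting error $R$ to the Lasso error $L$; without controlling it one cannot isolate a cone condition for $v$. I would handle it by Cauchy–Schwarz, $|\tfrac1n\scalar{Xu}{Xv}| \leq \sqrt{LR}$, feeding in the already-proved bound $\sqrt L \leq 3\lambda\sqrt s/\kappa(3,s)$ together with the refitting inequality $R \leq L + \tfrac2n\scalar{\varepsilon}{X(\bbeta - \hbeta)}$ (the expanded form of~\Cref{eq:refitting}, cf.~\Cref{thm:oracle_refitting_lasso}) to absorb the coupling. The enlarged tuning parameter supplies the slack needed to carry out this absorption and still close the argument with the parameter-$1$ cone $\normin{v_{S^c}}_1 \leq \normin{v_S}_1$ and a basic inequality of the form $R \leq 4\lambda\normin{v_S}_1$; applying RE($1,s$) through $R \leq 4\lambda\normin{v_S}_1 \leq 4\lambda\sqrt s\,\sqrt R/\kappa(1,s)$ then gives $\sqrt R \leq 4\lambda\sqrt s/\kappa(1,s)$, i.e.\ $R \leq 16\lambda^2 s/\kappa^2(1,s)$. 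Summing the two bounds gives the claim. I expect the delicate bookkeeping to lie in this last, self-referential (quadratic-in-$\sqrt R$) step, since it is here that the tight cone parameter and the constant $16$ must be extracted simultaneously from the coupled inequality.
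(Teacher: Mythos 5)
Your setup (the event $\normin{X^\top\varepsilon/n}_\infty\leq\lambda/6$ with probability $1-\delta$, the subgradient identity $\scalar{\hat g}{\bbeta}=\lambda\normin{\bbeta}_1$, and the bound $L\leq 9\lambda^2s/\kappa^2(3,s)$ for the Lasso half — in fact $L\leq 9\lambda^2 s/(4\kappa^2(3,s))$ holds) is correct, but the refitting half contains a genuine gap: the step ``$R\leq 4\lambda\normin{v_S}_1\leq 4\lambda\sqrt{s}\sqrt{R}/\kappa(1,s)$'' is unjustified, because RE($1$,$s$) is applicable only if $v=\tbeta-\bbeta$ lies exactly in the cone $\normin{v_{S^c}}_1\leq\normin{v_S}_1$, and your own inequalities never deliver that cone. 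Carrying out your plan, the sign-consistency relation after bounding the noise by $\tfrac{\lambda}{6}\normin{v}_1$ and the cross term by $\sqrt{LR}$ reads $\tfrac{5}{6}\normin{v_{S^c}}_1\leq\tfrac{7}{6}\normin{v_S}_1+\sqrt{LR}/\lambda$: a cone of parameter $7/5$ \emph{plus} a positive additive slack, from which membership in the $1$-cone cannot be deduced (the inequality goes the wrong way for that purpose, and no amount of slack elsewhere repairs it). Likewise the ``basic inequality'' $R\leq 4\lambda\normin{v_S}_1$ is never derived: expanding~\Cref{eq:refitting} gives $R\leq L+\tfrac{\lambda}{3}\normin{u}_1+\tfrac{\lambda}{3}\normin{v}_1$, which mixes in $u$-terms and $\normin{v_{S^c}}_1$ rather than producing a pure $\normin{v_S}_1$ bound. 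The best your decoupled route can yield, after a dichotomy on the slack, is $R\lesssim\lambda^2 s/\kappa^2(c,s)$ with $c\geq 7/5$ (effectively $c\approx 3$), i.e.\ a bound in which $\kappa(1,s)$ is replaced by a smaller restricted eigenvalue — a strictly weaker statement than the theorem whenever $\kappa(1,s)\gg\kappa(3,s)$.

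The missing idea is that the paper never decouples $L$ from $R$ and never applies Cauchy--Schwarz to the cross term. It polarizes exactly, $2\scalar{Xa}{Xb}=\normin{Xa}_2^2+\normin{Xb}_2^2-\normin{X(a-b)}_2^2$, and takes a weighted sum of the two subgradient inequalities (at $\hbeta$ and, via sign-consistency, at $\bbeta$) together with the refitting inequality, chosen so that the coupling term $\normin{X(\bbeta-\hbeta)}_2^2$ is absorbed via $2ab\leq a^2+b^2$ rather than estimated by $\sqrt{LR}$. This produces the single combined inequality $\tfrac{1}{n}\normin{Xv}_2^2+\tfrac{1}{n}\normin{Xu}_2^2\leq\lambda\{3\normin{v_S}_1-\normin{v_{S^c}}_1\}+\lambda\{4\normin{u_S}_1-4\normin{u_{S^c}}_1\}$, and since the left side is nonnegative a negative bracket can simply be dropped, so one may assume both brackets are nonnegative: this yields cone($3$) for the \emph{refitting} error $v$ and cone($1$) for the \emph{Lasso} error $u$ simultaneously. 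Note this pairing is the reverse of yours: in the paper's proof $9/\kappa^2(3,s)$ belongs to the refitting error and $16/\kappa^2(1,s)$ to the Lasso error; the $1$-cone for $u$ is an artifact of the coupled combination and is not available from the standard Lasso analysis in isolation, which is precisely why your attempt to attach $\kappa(1,s)$ to $R$ via a separate argument cannot close.
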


The proof of this theorem is based on~\Cref{def:sign_refitting_vector}, to be more precise, we can use an additional equality due to the sign-consistency property in~\cref{eq:sign_refitting}, that is:
\begin{align*}
	\frac{1}{n}(\bbeta - \hbeta)^\top X^\top(y - X\hbeta) &= \lambda\scalar{\bbeta - \hbeta}{\hrho} = \lambda(\normin{\bbeta}_1 - \normin{\hbeta}_1)\enspace.
\end{align*}
This relation holds, since two estimators $\hbeta$ (Lasso) and $\bbeta$ (sign-refitting) share the same subgradient.
Hence, we are able to re-use proof techniques similar to the one of~\Cref{thm:oracle_lasso} to provide an oracle inequality.

%!TEX root = ../bernoulli/lasso_refitting.tex

%%%%%%%%%%%%%%%%%%%%%%%%%%%%%%%%%%%%%%%%%%%%%%%%%%%%%%%%%%%%%%%%%%%%%%%%%%%%%%%
%%%%%%%%%%%%%%%%%%%%%%%%%%%%%%%%%%%%%%%%%%%%%%%%%%%%%%%%%%%%%%%%%%%%%%%%%%%%%%%
\subsection{\Boosted}
\label{sec:lasso_boosting}
%%%%%%%%%%%%%%%%%%%%%%%%%%%%%%%%%%%%%%%%%%%%%%%%%%%%%%%%%%%%%%%%%%%%%%%%%%%%%%%
%%%%%%%%%%%%%%%%%%%%%%%%%%%%%%%%%%%%%%%%%%%%%%%%%%%%%%%%%%%%%%%%%%%%%%%%%%%%%%%
Notice that to prove~\Cref{thm:oracle_refitting_lasso}, we used only the refitting property~\Cref{def:refititng_vector}, one of the possible candidates is the following refitting step
\begin{align}
    \label{eq:estimator_without_the_name}
     \bbeta \in \argmin_{\beta \in \Gamma} \frac{1}{2n}\normin{y - X\beta}_2^2 \enspace,
\end{align}
where $\Gamma = \{\beta \in \bbR^p : \normin{\beta - \hbeta}_1 \leq \hat{s}\lambda\}$
and $\hat s = |\supp(\hbeta)|$.
Intuitively, the Lasso coefficients are shrunk towards zero by a value proportional to the tuning parameter $\lambda$.
Since there are $\hat s$ non-zero coefficient in the Lasso solution, the proposed refitting strategy tries to "unshrink" $\hat s$ non-zero coefficients.
As we measure the "shrinkage" factor globally with the $\ell_1$-norm, it is natural (inspired by the orthogonal design) to set this factor as $\hat s \lambda$.
This motivates our choice of the feasible set $\Gamma$.
This estimator is legitimate following results given in~\citep[Theorem 7.2]{Bickel_Ritov_Tsybakov09} :
\begin{theorem}[\citet{Bickel_Ritov_Tsybakov09}]
    \label{thm:hat_sparsity_of_lasso}
    If the design matrix $X \in \bbR^{n \times p}$ satisfies the restricted eigenvalue condition RE($3$, $s$) and ${\lambda} = 2\sigma\sqrt{{2\log{(p/\delta)}}/{n}}$ for some $\delta \in (0, 1)$, then with probability $1 - \delta$ the following bound holds
    \begin{align}
        \hat s \leq \frac{9s\phi_{\max}}{\kappa^2(3, s)}\enspace,
    \end{align}
    where $\phi_{\max}$ is the maximal eigenvalue of $X^\top X/n$.
\end{theorem}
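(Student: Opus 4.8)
The plan is to turn the Karush--Kuhn--Tucker conditions into an \emph{exact} identity for the active coordinates of $\hbeta$, and then bound the resulting quantity from above using the noise control of~\Cref{lem:union_bound} together with the prediction bound of~\Cref{thm:oracle_lasso}. Throughout I work on the event $\{\normin{X^\top\varepsilon/n}_\infty \le \lambda/2\}$, which has probability at least $1-\delta$ by~\Cref{lem:union_bound} and on which~\Cref{thm:oracle_lasso} holds; this is why no additional event is lost.

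First I set $\hat S = \supp(\hbeta)$, so that $\hat s = |\hat S|$. By~\Cref{lem:kkt_lasso} there is $\hrho \in \sign(\hbeta)$ with $\frac{1}{n}X^\top(y - X\hbeta) = \lambda\hrho$, and for every $j \in \hat S$ one has $\hrho_j = \pm 1$, hence $\frac{1}{n}|X_j^\top(y - X\hbeta)| = \lambda$. Summing the squares over $\hat S$ gives the key exact identity
\[
\left\|\frac{1}{n}X_{\hat S}^\top(y - X\hbeta)\right\|_2^2 = \lambda^2 \hat s\enspace.
\]
The whole argument then reduces to bounding the left-hand side from above, since any such bound translates directly into an upper bound on $\hat s$.

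Next I decompose the residual as $y - X\hbeta = \varepsilon + X(\tbeta - \hbeta)$ and split $\frac{1}{n}X_{\hat S}^\top(y - X\hbeta)$ into a noise part and a signal part via the triangle inequality. For the noise part, on the above event each coordinate of $\frac{1}{n}X_{\hat S}^\top\varepsilon$ is at most $\lambda/2$ in absolute value, so its Euclidean norm is at most $\frac{\lambda}{2}\sqrt{\hat s}$. For the signal part I bound $\normin{\frac{1}{n}X_{\hat S}^\top X(\tbeta - \hbeta)}_2$ by $\sqrt{\phi_{\max}}\,\big(\frac{1}{n}\normin{X(\tbeta - \hbeta)}_2^2\big)^{1/2}$, using that the largest eigenvalue of $X_{\hat S}X_{\hat S}^\top/n$ is dominated by the largest eigenvalue $\phi_{\max}$ of $X^\top X/n$; then~\Cref{thm:oracle_lasso} controls the remaining prediction error by $\frac{9\lambda^2 s}{4\kappa^2(3,s)}$, yielding the signal bound $\frac{3\lambda\sqrt{s\phi_{\max}}}{2\kappa(3,s)}$.

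Combining the two parts gives $\lambda\sqrt{\hat s} \le \frac{\lambda}{2}\sqrt{\hat s} + \frac{3\lambda\sqrt{s\phi_{\max}}}{2\kappa(3,s)}$. The term $\frac{\lambda}{2}\sqrt{\hat s}$ appears on both sides and is absorbed on the left, after which dividing by $\lambda$ and squaring produces the claimed bound $\hat s \le \frac{9s\phi_{\max}}{\kappa^2(3,s)}$. The argument is essentially routine; the one point I would state with care is the eigenvalue comparison $\phi_{\max}(X_{\hat S}) \le \phi_{\max}(X)$ (deleting columns cannot increase the top eigenvalue of the Gram matrix) used in the signal bound. The genuinely load-bearing mechanism is the cancellation of the $\frac{\lambda}{2}\sqrt{\hat s}$ term, which is what lets the \emph{exact} KKT identity become a \emph{self-improving} upper bound on $\hat s$ rather than a tautology.
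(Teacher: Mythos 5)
Your proof is correct and is essentially the argument the paper delegates to \citet{Bickel_Ritov_Tsybakov09} (their Theorem~7.2) combined with \Cref{thm:oracle_lasso}: the KKT equality $|X_j^\top(y-X\hbeta)|/n=\lambda$ on the active set, noise control at level $\lambda/2$ on the very event underlying \Cref{thm:oracle_lasso} (so indeed no probability is lost), the spectral comparison through $\phi_{\max}$, and the oracle prediction bound. Your arrangement as an exact $\ell_2$ identity followed by triangle-inequality absorption of the $\tfrac{\lambda}{2}\sqrt{\hat s}$ term is only a cosmetic variant of the componentwise lower bound $|X_j^\top X(\tbeta-\hbeta)|/n\ge\lambda/2$ used in that reference, and all your constants and the eigenvalue comparison for the principal submatrix check out.
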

The proof of this theorem is a direct application of Theorem~7.2 in~\citep{Bickel_Ritov_Tsybakov09} together with ~\Cref{thm:oracle_lasso} to bound $\frac{1}{n}\normin{X(\beta^* - \hat\beta)}_2^2$, we then omit it here.
%\begin{proof}
%    See~\citep{Bickel_Ritov_Tsybakov09} and apply~\Cref{thm:oracle_lasso} to bound $\frac{1}{n}\normin{X(\beta^* - \hat\beta)}_2^2$.
%\end{proof}
For the estimator, described in~\Cref{eq:estimator_without_the_name}, we can state the following result, which relies on using both~\Cref{thm:hat_sparsity_of_lasso} and~\Cref{thm:oracle_refitting_lasso}:
\begin{cor}
    \label{cor:oracle_for_constrained_boosted}
    If the design matrix $X \in \bbR^{n \times p}$ satisfies the restricted eigenvalue condition RE($3$, $s$) and ${\lambda} = 2\sigma\sqrt{{2\log{(p/\delta)}}/{n}}$ for some $\delta \in (0, 1)$, then with probability $1 - \delta$ the following bound holds
    \begin{equation*}
            \frac{1}{n}\normin{X(\beta^* - \bbeta)}_2^2 \leq \frac{\lambda^2s}{\kappa^2(3, s)}\Big(\frac{9}{4}+9\phi_{\max}\Big)\enspace.
    \end{equation*}
\end{cor}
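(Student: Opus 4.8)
The plan is to recognize that the constrained estimator in \Cref{eq:estimator_without_the_name} is itself a refitting in the sense of \Cref{def:refititng_vector}, so that \Cref{thm:oracle_refitting_lasso} applies verbatim and reduces the whole statement to controlling two explicit quantities. Indeed, since $\hbeta$ is feasible for the program defining $\bbeta$ (because $\normin{\hbeta - \hbeta}_1 = 0 \le \hat s \lambda$), the minimality of $\bbeta$ over $\Gamma$ gives $\normin{y - X\bbeta}_2 \le \normin{y - X\hbeta}_2$, which is exactly the refitting condition. Hence on the event supplied by \Cref{lem:union_bound} I may invoke \Cref{thm:oracle_refitting_lasso} to write $\frac{1}{n}\normin{X(\beta^* - \bbeta)}_2^2 \le \frac{1}{n}\normin{X(\tbeta - \hbeta)}_2^2 + \lambda \normin{\bbeta - \hbeta}_1$.

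It then remains to bound the two terms on the right-hand side separately. For the first term I would apply \Cref{thm:oracle_lasso}, which yields $\frac{1}{n}\normin{X(\tbeta - \hbeta)}_2^2 \le 9\lambda^2 s / (4\kappa^2(3,s))$ and accounts for the $9/4$ factor in the final bound. For the second term I would use the feasibility constraint defining $\Gamma$ directly: by construction $\normin{\bbeta - \hbeta}_1 \le \hat s \lambda$, so $\lambda\normin{\bbeta - \hbeta}_1 \le \hat s \lambda^2$. The crucial point is that the random quantity $\hat s = |\supp(\hbeta)|$ is controlled by \Cref{thm:hat_sparsity_of_lasso}, giving $\hat s \lambda^2 \le 9 s \phi_{\max}\lambda^2/\kappa^2(3,s)$, which produces the $9\phi_{\max}$ term.

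Summing the two bounds gives $\frac{\lambda^2 s}{\kappa^2(3,s)}\big(\tfrac{9}{4} + 9\phi_{\max}\big)$, matching the claim. The only genuine subtlety --- and a step I would treat with care rather than a true obstacle --- is the probabilistic bookkeeping: \Cref{thm:oracle_lasso}, \Cref{thm:oracle_refitting_lasso} and \Cref{thm:hat_sparsity_of_lasso} are each stated as holding with probability $1-\delta$, and a naive union over three events would degrade the confidence level. I would avoid this by observing that all three results are established on the single common event $\{\normin{X^\top\varepsilon/n}_\infty \le \lambda/2\}$ of \Cref{lem:union_bound}, which has probability at least $1 - \delta$; on this one event every inequality above holds simultaneously, so the three results chain together at no cost in probability, and the choice $\lambda = 2\sigma\sqrt{2\log(p/\delta)/n}$ is exactly the one under which all of them are stated.
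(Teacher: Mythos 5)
Your proposal is correct and follows exactly the route the paper intends: invoke \Cref{thm:oracle_refitting_lasso} (after noting $\hbeta \in \Gamma$ makes $\bbeta$ a valid refitting), bound the first term via \Cref{thm:oracle_lasso} and the second via the constraint $\normin{\bbeta - \hbeta}_1 \leq \hat{s}\lambda$ together with \Cref{thm:hat_sparsity_of_lasso}. Your observation that all three ingredients hold simultaneously on the single event $\{\normin{X^\top\varepsilon/n}_\infty \leq \lambda/2\}$ of \Cref{lem:union_bound}, so no union bound is needed, is precisely the bookkeeping the paper leaves implicit.
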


Note that the control over the magnitudes of the coefficients allows to develop desirable rates for the refitted estimator.
The description of the set $\Gamma$ in the definition of the refitting~\eqref{eq:estimator_without_the_name} motivates us to consider the following \Boosted estimator \citep{Buhlmann_Yu03}:
\begin{definition}[\Boosted]
For any $\lambda_1, \lambda_2 > 0$ we call $\hbeta^{\lambda_1, \lambda_2}$ a \Boosted refitting if it is a solution of
    \begin{align}
    \label{eq:lasso_boosting_strange_form}
    \hbeta^{\lambda_1, \lambda_2} \in \argmin_{\beta \in \bbR^p} \frac{1}{2n}\norm{y - X\beta}_2^2 + \lambda_2\normin{\beta - \hbeta^{\lambda_1}}_1\enspace,
\end{align}
where $\hbeta^{\lambda_1}$ is the Lasso solution with tuning parameter $\lambda_1$.
\end{definition}
\begin{remark}
    {Indeed, procedure~\eqref{eq:lasso_boosting_strange_form} consists in applying the \emph{Twicing} method \cite{Tukey77} to the Lasso, which is is equivalent to a two-step boosting approach.} To see that, let $\hat\Delta = \beta - \hbeta$. Then, with a change of variable we get:
    \begin{align}
        \label{eq:lasso_boosting}
        \hat\Delta \in \argmin_{\Delta \in \bbR^p} \frac{1}{2n}\norm{(y - X\hbeta^{\lambda_1}) - X\Delta}_2^2 + \lambda_2\normin{\Delta}_1\enspace,
    \end{align}
    and finally $\hbeta^{\lambda_1, \lambda_2} = \hat\Delta + \hbeta^{\lambda_1}$.
\end{remark}
It is known result that in the Lasso case there exists a critical value $\lambda_{1, \max} = \normin{X^\top y / n}_{\infty}$ such that $\hbeta^{\lambda_1} = 0$ iff $\lambda_1 \geq \lambda_{1, \max}$, due to the previous remark, the \Boosted can be written as a Lasso problem and we can give the result of the same nature:
\begin{proposition}
\label{prop:GoodProp}
    If $\lambda_1 < \lambda_{1, \max}$, then the solution of \Boosted satisfies: $\hbeta^{\lambda_1, \lambda_2} = \hbeta^{\lambda_1}$ iff $\lambda_2 \geq \lambda_1$. Moreover, if $\lambda_1 \geq \lambda_{1, \max}$, then the \Boosted estimator is simply the Lasso estimator with tuning parameter $\lambda_2$.
\end{proposition}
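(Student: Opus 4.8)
The plan is to exploit the change of variables already recorded in the remark following \eqref{eq:lasso_boosting_strange_form}, which recasts the \Boosted program as a genuine Lasso problem. Writing $r = y - X\hbeta^{\lambda_1}$ for the first-step Lasso residual, the displacement $\hat\Delta = \hbeta^{\lambda_1, \lambda_2} - \hbeta^{\lambda_1}$ minimizes $\frac{1}{2n}\normin{r - X\Delta}_2^2 + \lambda_2\normin{\Delta}_1$, i.e.\ it is a Lasso fit of response $r$ with tuning parameter $\lambda_2$. The identity $\hbeta^{\lambda_1, \lambda_2} = \hbeta^{\lambda_1}$ is therefore equivalent to $\hat\Delta = 0$, so the whole first claim reduces to determining for which $\lambda_2$ the vector $0$ solves this reduced Lasso.

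For that I would invoke the elementary first-order characterization: $0$ minimizes $\frac{1}{2n}\normin{r - X\Delta}_2^2 + \lambda_2\normin{\Delta}_1$ if and only if $0 \in \lambda_2\partial\normin{\cdot}_1(0) - \frac{1}{n}X^\top r$, which, since $\partial\normin{\cdot}_1(0) = \{g : \normin{g}_\infty \le 1\}$, amounts to $\normin{X^\top r / n}_\infty \le \lambda_2$. The crux is then to evaluate $\normin{X^\top r / n}_\infty$, and here the key ingredient is the KKT conditions of the first-step Lasso (\Cref{lem:kkt_lasso}): they give $\frac{1}{n}X^\top r = \lambda_1\hrho$ for some $\hrho \in \sign(\hbeta^{\lambda_1})$, whence $\normin{X^\top r / n}_\infty = \lambda_1\normin{\hrho}_\infty$.

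The main (and only mildly delicate) obstacle is to argue that $\normin{\hrho}_\infty = 1$. Since we are in the regime $\lambda_1 < \lambda_{1,\max}$, the first-step Lasso is nonzero, so there is a coordinate with $\hbeta^{\lambda_1}_j \neq 0$, which by the definition of the subdifferential in \eqref{eq:definition_signs} forces $|\hrho_j| = 1$; combined with $\normin{\hrho}_\infty \le 1$ this yields $\normin{\hrho}_\infty = 1$ and hence $\normin{X^\top r / n}_\infty = \lambda_1$. Substituting into the characterization above shows that $\hat\Delta = 0$ holds if and only if $\lambda_2 \ge \lambda_1$; the only-if direction is immediate, since $\lambda_2 < \lambda_1 = \normin{X^\top r / n}_\infty$ makes the optimality condition at $0$ fail, so $0$ cannot be a solution.

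For the second claim I would simply substitute $\lambda_1 \ge \lambda_{1,\max}$, which forces $\hbeta^{\lambda_1} = 0$. The penalty $\lambda_2\normin{\beta - \hbeta^{\lambda_1}}_1$ in \eqref{eq:lasso_boosting_strange_form} then collapses to $\lambda_2\normin{\beta}_1$, so the defining program of the \Boosted estimator becomes verbatim the Lasso problem \eqref{eq:Lasso} with tuning parameter $\lambda_2$, establishing $\hbeta^{\lambda_1, \lambda_2} = \hbeta^{\lambda_2}$ and concluding the proof.
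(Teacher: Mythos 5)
Your proof is correct and takes essentially the same route as the paper's: both reduce \Boosted{} to a Lasso problem in $\Delta$ via the change of variables, identify the critical value $\lambda_{2,\max} = \normin{X^\top(y - X\hbeta^{\lambda_1})/n}_\infty = \lambda_1\normin{\hrho^{\lambda_1}}_\infty$ through the first-step KKT conditions, and conclude from $\lambda_1 < \lambda_{1,\max}$ that $\hbeta^{\lambda_1} \neq 0$ and hence $\normin{\hrho^{\lambda_1}}_\infty = 1$, while the second claim follows in both cases by substituting $\hbeta^{\lambda_1} = 0$. The only difference is cosmetic: you spell out the subdifferential characterization of $0$ being a solution of the reduced problem, where the paper invokes the known formula for the critical tuning parameter of the Lasso.
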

    Control over the $\ell_1$-norm of the difference, allows to develop oracle inequality with minimax rates (\Cref{cor:oracle_for_constrained_boosted}), similar result but somehow stronger can be shown for the regularized version (\Boosted):
\begin{lemma}
    \label{lem:boosting_lemma}
    For any $\hbeta^{\lambda_1}$ on the event $\{\normin{\varepsilon^\top X/n}_{\infty} \leq {\lambda_1}/{2}\}$ we have:
        \begin{align}\label{ineq:boosting_lemma}
        \frac{1}{n}\norm{X(\tbeta - \hbeta^{\lambda_1, \lambda_2})}_2^2 + (2\lambda_2 - \lambda_1)\normin{\hbeta^{\lambda_1, \lambda_2} - \hbeta^{\lambda_1}}_1\leq \frac{1}{n}\norm{X(\tbeta - \hbeta^{\lambda_1})}_2^2\enspace.
    \end{align}
\end{lemma}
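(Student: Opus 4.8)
The plan is to run the standard ``basic inequality'' argument, but with the decisive twist of comparing the Boosted minimizer against the first-step Lasso $\hbeta^{\lambda_1}$ rather than against the oracle $\tbeta$. The point is that evaluating the objective in~\eqref{eq:lasso_boosting_strange_form} at $\beta = \hbeta^{\lambda_1}$ annihilates the penalty term $\lambda_2\normin{\beta - \hbeta^{\lambda_1}}_1$, leaving only $\frac{1}{2n}\normin{y - X\hbeta^{\lambda_1}}_2^2$. Since $\hbeta^{\lambda_1, \lambda_2}$ is a minimizer, optimality immediately gives
\[
\frac{1}{2n}\normin{y - X\hbeta^{\lambda_1, \lambda_2}}_2^2 + \lambda_2\normin{\hbeta^{\lambda_1, \lambda_2} - \hbeta^{\lambda_1}}_1 \leq \frac{1}{2n}\normin{y - X\hbeta^{\lambda_1}}_2^2 \enspace.
\]

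Next I would substitute $y = X\tbeta + \varepsilon$ into both residuals and expand the two squared norms. The term $\frac{1}{2n}\normin{\varepsilon}_2^2$ is common to both sides and cancels, and the two linear-in-$\varepsilon$ cross terms combine into a single one. After rearranging, the inequality reads
\[
\frac{1}{2n}\normin{X(\tbeta - \hbeta^{\lambda_1, \lambda_2})}_2^2 + \lambda_2\normin{\hbeta^{\lambda_1, \lambda_2} - \hbeta^{\lambda_1}}_1 \leq \frac{1}{2n}\normin{X(\tbeta - \hbeta^{\lambda_1})}_2^2 + \frac{1}{n}\varepsilon^\top X(\hbeta^{\lambda_1, \lambda_2} - \hbeta^{\lambda_1}) \enspace.
\]
The feature of this manipulation that makes the lemma work is that the surviving noise term is paired with exactly the $\ell_1$ distance $\normin{\hbeta^{\lambda_1, \lambda_2} - \hbeta^{\lambda_1}}_1$ that already appears on the left-hand side.

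It then remains to control this cross term. I would bound it by H\"older's inequality, $\frac{1}{n}|\varepsilon^\top X(\hbeta^{\lambda_1, \lambda_2} - \hbeta^{\lambda_1})| \leq \normin{\varepsilon^\top X/n}_\infty \normin{\hbeta^{\lambda_1, \lambda_2} - \hbeta^{\lambda_1}}_1$, and on the assumed event replace $\normin{\varepsilon^\top X/n}_\infty$ by $\lambda_1/2$. Absorbing the resulting $\frac{\lambda_1}{2}\normin{\hbeta^{\lambda_1, \lambda_2} - \hbeta^{\lambda_1}}_1$ into the left-hand side collapses the coefficient of the $\ell_1$ term to $\lambda_2 - \lambda_1/2$, and multiplying the whole inequality by $2$ produces the stated constants $\frac{1}{n}$ and $(2\lambda_2 - \lambda_1)$.

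Unlike the oracle inequalities of~\Cref{thm:oracle_lasso} and~\Cref{thm:oracle_sign_refitting_lasso}, this argument needs neither a restricted-eigenvalue condition nor a sparsity/support analysis, precisely because the comparison is made against $\hbeta^{\lambda_1}$ and not against $\tbeta$. I expect the only genuinely nontrivial step to be the very first one --- recognizing that $\hbeta^{\lambda_1}$, rather than the oracle, is the right comparison point, which is what simultaneously kills the penalty and keeps the noise term tethered to the quantity we wish to bound.
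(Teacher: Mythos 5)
Your proposal is correct and follows essentially the same route as the paper's proof: optimality of $\hbeta^{\lambda_1,\lambda_2}$ compared against $\hbeta^{\lambda_1}$ (which kills the penalty term), substitution of the model, then H\"older's inequality on the event to absorb the noise term into the $\ell_1$ distance. The algebra, including the final multiplication by $2$ yielding the coefficient $2\lambda_2 - \lambda_1$, matches the paper's argument exactly.
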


Combining~\Cref{lem:boosting_lemma} and~\Cref{thm:oracle_lasso} we can state the following corollary
\begin{cor}
    \label{cor:boosted_oracle}
    If the design matrix $X \in \bbR^{n \times p}$ satisfies the restricted eigenvalue condition RE($3$, $s$) and ${\lambda_1} = 2\sigma\sqrt{{2\log{(p/\delta)}}/{n}}$ for some $\delta \in (0, 1)$, then with probability $1 - \delta$ the following bound holds
    \begin{equation*}
            \frac{1}{n}\normin{X(\beta^* - \bbeta)}_2^2 + ({2\lambda_2 - \lambda_1})\normin{\bbeta - \hbeta}_1 \leq \frac{9\lambda^2_1 s}{4 \kappa^2(3, s)}\enspace,
    \end{equation*}
    where $\bbeta = \hbeta^{\lambda_1, \lambda_2}$ is a boosting refitting of Lasso solution $\hbeta = \hbeta^{\lambda_1}$.
\end{cor}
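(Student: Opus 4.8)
The plan is to recognize that this corollary is a direct synthesis of the two results it cites, so the only point genuinely requiring care is that both must be invoked on a \emph{single} probabilistic event, so that no union bound (and hence no degradation from $1-\delta$ to $1-2\delta$) is incurred. First I would fix $\lambda_1 = 2\sigma\sqrt{2\log(p/\delta)/n}$ and apply \Cref{lem:union_bound} to conclude that the event $\mathcal{E} = \{\normin{X^\top\varepsilon/n}_\infty \leq \lambda_1/2\}$ holds with probability at least $1-\delta$. All subsequent reasoning is then purely deterministic on $\mathcal{E}$.

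Working on $\mathcal{E}$, I would invoke \Cref{lem:boosting_lemma}, whose hypothesis is precisely membership in $\mathcal{E}$. Writing $\bbeta = \hbeta^{\lambda_1, \lambda_2}$ and $\hbeta = \hbeta^{\lambda_1}$, and identifying the true regression vector $\tbeta$ with $\beta^*$ (the paper uses both symbols interchangeably), this gives
\[
\frac{1}{n}\normin{X(\beta^* - \bbeta)}_2^2 + (2\lambda_2 - \lambda_1)\normin{\bbeta - \hbeta}_1 \leq \frac{1}{n}\normin{X(\beta^* - \hbeta)}_2^2\enspace.
\]
Still on the same event $\mathcal{E}$, I would bound the right-hand side by \Cref{thm:oracle_lasso}. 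The crucial observation is that the proof of \Cref{thm:oracle_lasso} also rests, via \Cref{lem:union_bound}, on the identical event $\mathcal{E}$; hence it applies without any further probabilistic cost and yields $\frac{1}{n}\normin{X(\beta^* - \hbeta)}_2^2 \leq 9\lambda_1^2 s / (4\kappa^2(3,s))$. Chaining the two displays produces exactly the claimed inequality.

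The main (and essentially the only) subtlety to verify is therefore that the event underlying \Cref{thm:oracle_lasso} coincides with the event $\mathcal{E}$ demanded by \Cref{lem:boosting_lemma}; since both are $\{\normin{X^\top\varepsilon/n}_\infty \leq \lambda_1/2\}$ with the same $\lambda_1$, the two bounds hold simultaneously on a set of probability $1-\delta$ and combine directly. No restricted-eigenvalue manipulation or fresh concentration argument is needed here, since \Cref{lem:boosting_lemma} already encapsulates the twicing/boosting algebra and \Cref{thm:oracle_lasso} already supplies the sparse oracle rate; the corollary is then a one-line transitive comparison.
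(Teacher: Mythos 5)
Your proposal is correct and follows exactly the paper's own route: the paper proves this corollary in one line by combining \Cref{lem:boosting_lemma} with \Cref{thm:oracle_lasso}. Your explicit check that both results are invoked on the single event $\{\normin{X^\top\varepsilon/n}_{\infty}\leq \lambda_1/2\}$, so that the probability stays at $1-\delta$ with no union bound, is a detail the paper leaves implicit, but the argument is the same.
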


The \Boosted is obviously a refitting strategy, and the previous result shows that it is worth refitting in terms of prediction error.
Moreover,~\Cref{prop:GoodProp} and~\Cref{cor:boosted_oracle} suggest to select $\lambda_2 \in (\lambda_1/2, \lambda_1)$, such a choice allows to improve the Lasso prediction accuracy with high probability.
Besides, the result can be applied iteratively as it does not depend on the choice of $\hbeta^{\lambda_1}$, hence it works for any iteration step.
\begin{remark}
    {A possible extension of the \Boosted is the Boosted Support Lasso:
        \begin{align}
            \label{eq:boosted_support_lasso}
            \bbeta^{\lambda_1, \lambda_2} \in \argmin_{\beta : \supp(\beta) \subset \supp(\hbeta^{\lambda_1})} \frac{1}{2n}\norm{y - X\beta}_2^2 + \lambda_2{\normin{\beta - \hbeta^{\lambda_1}}_1}\enspace,
        \end{align}
        this estimator is inspired by the Relaxed Lasso in~\Cref{eq:relaxed_lasso} and the \Boosted in~\Cref{eq:lasso_boosting_strange_form}.
        Interestingly, both~\Cref{lem:boosting_lemma} and~\Cref{cor:boosted_oracle} hold for the Boosted Support Lasso refitting and follow completely identical proof.}
\end{remark}

%!TEX root = ../bernoulli/lasso_refitting.tex

%%%%%%%%%%%%%%%%%%%%%%%%%%%%%%%%%%%%%%%%%%%%%%%%%%%%%%%%%%%%%%%%%%%%%%%%%%%%%%%
%%%%%%%%%%%%%%%%%%%%%%%%%%%%%%%%%%%%%%%%%%%%%%%%%%%%%%%%%%%%%%%%%%%%%%%%%%%%%%%
\subsection{\Bregman}
\label{sec:bregman_iteration}
%%%%%%%%%%%%%%%%%%%%%%%%%%%%%%%%%%%%%%%%%%%%%%%%%%%%%%%%%%%%%%%%%%%%%%%%%%%%%%%
%%%%%%%%%%%%%%%%%%%%%%%%%%%%%%%%%%%%%%%%%%%%%%%%%%%%%%%%%%%%%%%%%%%%%%%%%%%%%%%
We first remind the definition of the Bregman divergence associated with the $\ell_1$-norm
\begin{definition}[Bregman divergence for the $\ell_1$-norm]
  For any $z, w \in \bbR^p$ and any $\rho \in \partial\norm{w}_1$, the Bregman divergence for the $\ell_1$-norm is defined as
\begin{equation}
    \label{eq:definition_bregman_divergence}
    \Breg{\ell_1}^{\rho}(z, w) = \norm{z}_1 - \normin{w}_1 - \scalar{\rho}{z - w} \geq 0, \quad \rho \in \partial\norm{w}_1\enspace.
\end{equation}
\end{definition}
In ~\citep{Osher_Burger_Goldfarb_Xu_Yin05}, the authors proposed the Bregman Iterations procedure, originally designed to improve iso-TV results.
In Lasso case, Bregman Iterations has the following expression: for a fixed $\lambda > 0$, initializing with $\hrho_0= \hbeta_0  = 0_p$,
\begin{equation}
\label{eq:old_bregman_iteration}
\begin{aligned}
    \hbeta_k &\in \argmin_{\beta \in \bbR^p} \frac{1}{2n}\norm{y - X\beta}_2^2 + \lambda\Breg{\ell_1}^{\hrho_{k-1}}(\beta, \hbeta_{k-1})\enspace,\\
    \text{s.t.} \quad \hrho_{k} &= \hrho_{k-1} + \frac{1}{\lambda n}X^\top(y - X{\hbeta_{k}})\enspace,
\end{aligned}
\end{equation}
where $\Breg{\ell_1}^{\rho}(\cdot, \cdot)$ is the Bregman divergence defined in~\cref{eq:definition_bregman_divergence}.
The Bregman Iterations can be seen as a discretization of Bregman Inverse Scale Space (ISS), which is analyzed by~\citet{osher2016sparse}, who
 provided statistical guarantees for the ISS dynamic.
One of the drawbacks of such an approach is the iterative nature of the algorithm: one need to tune the number of iterations $k$ and the regularization parameter $\lambda > 0$.
In the recent work by~\citet{Brinkmann_Burger_Rasch_Satour16}, the authors proposed another closely related algorithm, which for a given Lasso solution $\hbeta^{\lambda_1}$, performs the following refitting
\begin{equation}
    \label{eq:bregman_constraint_equal_zero}
    \begin{aligned}
        & {\bbeta}\in \argmin_{{\beta} \in \bbR^p}
          \frac{1}{2n}\norm{y - X\beta}_2^2 \\
        & \text{s.t.}
         \quad \Breg{\ell_1}^{\hrho^{\lambda_1}}(\beta, \hbeta^{\lambda_1}) = 0\enspace.
    \end{aligned}
\end{equation}
Unlike the previous approaches, in this section we consider the Lasso solution $\hbeta^{\lambda_1}$ and the following \Bregman refitting strategy, defined as
\begin{definition}[\Bregman]
For any $\lambda_1, \lambda_2 > 0$ we call $\hbeta^{\lambda_1, \lambda_2}$ a \Bregman refitting a solution of
\begin{align}
    \label{eq:bregman_iteration_linear}
    \hbeta^{\lambda_1, \lambda_2} \in \argmin_{\beta \in \bbR^p} \frac{1}{2n}\norm{y - X\beta}_2^2 + \lambda_2\Breg{\ell_1}^{\hrho^{\lambda_1}}(\beta, \hbeta^{\lambda_1})\enspace,
\end{align}
where $\hbeta^{\lambda_1}$ is a Lasso solution with tuning parameter $\lambda_1$ and $\Breg{\ell_1}^{\hrho^{\lambda_1}}(\cdot, \cdot)$ is the Bregman divergence given in~\Cref{eq:definition_bregman_divergence}.
\end{definition}
Considering the particular case of orthogonal design, we show in~\Cref{sec:linear_regression_with_orthogonal_design} that \Bregman refitting is a generalization of both approaches.
In particular for this design, the \Bregman refitting can recover Bregman Iterations for any $k$.
Also, \Bregman is computationally more appealing since it only requires evaluating two Lasso problems, while the Bregman Iterations would require $k$ evaluations.

We start by introducing some basic properties of the Bregman divergence associated with  the $\ell_1$-norm.
\begin{lemma}
\label{lemma:props_bregman_divergence}
Let ${z}, w \in \bbR^p$, and denote $\rho \in \partial\norm{{w}}_1$ a subgradient of the $\ell_1$-norm evaluated at ${w} \in \bbR^p$, therefore the following properties hold independent of the choice of $\rho$
\begin{enumerate}
    \item ${z} \mapsto \Breg{\ell_1}^{\rho}({z}, w)$ is convex for all $w$,
    \item $\Breg{\ell_1}^{\rho}({z}, w) = \norm{{z}}_1 - \scalar{{\rho}}{{z}}$, \label{prop:breg_via_subgrad}
    \item $0 \leq \Breg{\ell_1}^{\rho}({z}, w) \leq 2\norm{{z}}_1$, \label{prop:breg_bounds}
    \item $\Breg{\ell_1}^{\rho}({z}, w) = \sum_{i = 1}^p |z_i| - z_i\rho_i  = \sum_{i = 1}^p \Breg{\ell_1}^{\rho_i}(z_i, w_i)$, where $z_i, w_i, \rho_i$ are the $i^{th}$ components of ${z}, w, {\rho}$, \label{prop:separ}
    \item If $\sign({z}) = \sign(w)$, therefore $\Breg{\ell_1}^{\rho}({z},w) = 0$.
\end{enumerate}
\end{lemma}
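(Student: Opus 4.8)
The plan is to establish property~\ref{prop:breg_via_subgrad} first, because it is the engine driving all the others. The one identity doing the work is $\scalar{\rho}{w} = \norm{w}_1$ for every $\rho \in \partial\norm{w}_1$: componentwise, $w_i > 0$ forces $\rho_i = 1$ and $w_i < 0$ forces $\rho_i = -1$, so $\rho_i w_i = |w_i|$ on the support, while $w_i = 0$ gives $\rho_i w_i = 0 = |w_i|$ for any admissible $\rho_i \in [-1, 1]$; summing over $i$ yields the identity. Substituting it into the definition~\eqref{eq:definition_bregman_divergence} cancels the two $w$-dependent terms and leaves $\Breg{\ell_1}^{\rho}(z, w) = \norm{z}_1 - \scalar{\rho}{z}$. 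The first property is then immediate from the definition, since $z \mapsto \Breg{\ell_1}^{\rho}(z, w)$ is the sum of the convex map $z \mapsto \norm{z}_1$ and a term affine in $z$.

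Properties~\ref{prop:breg_bounds} and~\ref{prop:separ} I would deduce directly from property~\ref{prop:breg_via_subgrad}. For property~\ref{prop:breg_bounds}, the lower bound $\Breg{\ell_1}^{\rho}(z, w) \geq 0$ is just the subgradient inequality for $\norm{\cdot}_1$ at $w$ (already recorded in~\eqref{eq:definition_bregman_divergence}), and the upper bound follows from $\norm{\rho}_\infty \leq 1$, which gives $|\scalar{\rho}{z}| \leq \norm{\rho}_\infty \norm{z}_1 \leq \norm{z}_1$ and hence $\Breg{\ell_1}^{\rho}(z, w) = \norm{z}_1 - \scalar{\rho}{z} \leq 2\norm{z}_1$. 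Property~\ref{prop:separ} is property~\ref{prop:breg_via_subgrad} read coordinate by coordinate: writing $\norm{z}_1 = \sum_i |z_i|$ and $\scalar{\rho}{z} = \sum_i \rho_i z_i$ expresses the divergence as $\sum_i (|z_i| - \rho_i z_i)$, and the scalar instance of the same formula identifies each summand with $\Breg{\ell_1}^{\rho_i}(z_i, w_i)$.

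For the last property I would inspect this separated sum under the hypothesis $\sign(z) = \sign(w)$. Equality of the set-valued signs forces $z$ and $w$ to share the same sign pattern, so three cases exhaust each coordinate: $z_i > 0$ makes $\rho_i = 1$ and $|z_i| - \rho_i z_i = 0$; $z_i < 0$ makes $\rho_i = -1$ and again $|z_i| - \rho_i z_i = 0$; and $z_i = 0$ kills the summand for any $\rho_i \in [-1, 1]$. Every term vanishes, so $\Breg{\ell_1}^{\rho}(z, w) = 0$. The arguments are routine, and the single delicate point is exactly here: because $\sign(\cdot)$ is set-valued, I must verify that the conclusion holds for \emph{every} admissible subgradient $\rho$, not merely one, which is why the coordinatewise case analysis above is stated for an arbitrary $\rho \in \partial\norm{w}_1$. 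The recurring identity $\scalar{\rho}{w} = \norm{w}_1$ is what guarantees this independence from the choice of $\rho$ throughout.
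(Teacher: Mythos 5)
Your proposal is correct and follows essentially the same route as the paper's proof: the identity $\scalar{\rho}{w} = \norm{w}_1$ yields $\Breg{\ell_1}^{\rho}(z, w) = \norm{z}_1 - \scalar{\rho}{z}$, from which the bounds follow via $\normin{\rho}_{\infty} \leq 1$, separability from that of the $\ell_1$-norm, and the vanishing property from the sign structure of the subgradient. The only cosmetic difference is convexity, where you invoke ``convex plus affine'' while the paper runs the explicit triangle-inequality computation; your componentwise case analysis for the last property simply spells out what the paper leaves implicit, and correctly handles arbitrariness of $\rho$.
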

\begin{proof}
    See supplementary material for details.
\end{proof}
\begin{figure}[t!]
\centering
\begin{subfigure}[b]{0.45\textwidth}
\centering
\begin{tikzpicture}[
    scale=0.99,
    thick,
    >=stealth',
    dot/.style = {\
      draw,
      fill = white,
      circle,
      inner sep = 0pt,
      minimum size = 4pt
    }
  ]
  \coordinate (O) at (0,0);
  \clip(-3, -2.1) rectangle (3, 3);
  \draw[->] (-3,0) -- (3,0) coordinate[label = {below: }] (xmax);
  \draw[->] (0,-2) -- (0,3) coordinate[label = {right: }] (ymax);
  \path[name path=x] (0.3,0.5) -- (6.7,4.7);
  \path[name path=y] plot[smooth] coordinates {(-0.3,2) (2,1.5) (4,2.8) (6,5)};
  \scope[name intersections = {of = x and y, name = i}]
    \draw[color=black]      (0,0) -- (2.5,2.5) node[pos=0.8, below right] {};
    \draw[color=black]      (0,0) -- (-2.5,2.5) node[pos=0.8, below right] {};
    \draw[color=blue, dashed]      (2,0) -- (2,2) node[pos=0.35, above right] {$|w|$};
    \draw[dashed] (-1.3,-1.3)--(2,2) node[pos=0, below]{$\rho$};
    \draw [blue,fill=blue] (2, 0) circle(0.35ex);
    \node [blue] at (2, -0.3) {$w$};
    \draw [blue,fill=blue] (2, 2) circle(0.35ex);
    \draw[color=black, dashed]      (-1,-1) -- (-1,1) node[pos=0.5, below left] {};
    \draw [blue,fill=blue] (-1, 0) circle(0.35ex);
    \draw [black,fill=black] (-1, 1) circle(0.35ex);
    \draw [black,fill=black] (-1, -1) circle(0.35ex);
    \node [blue] at (-0.8, 0.2) {$z$};
    \draw[decorate, decoration={brace, mirror, amplitude=5pt, raise=3pt},] (-1,1)-- node[black,midway, below left=0.25cm]
         {$\Breg{\ell_1}^{\rho}(z, w)$}(-1,-1);
% \node (title) at (0, 4) {$y \neq 0$ and $\sign(x) \neq \sign(y)$};
   \endscope
\end{tikzpicture}%
\caption{Case $w \neq 0$ and $\sign(z) \neq \sign(w)$}
\label{fig:subgradient_1}
\end{subfigure}%
\begin{subfigure}[b]{0.45\textwidth}
\centering
\begin{tikzpicture}[
    scale=0.99,
    thick,
    >=stealth',
    dot/.style = {
      draw,
      fill = white,
      circle,
      inner sep = 0pt,
      minimum size = 4pt
    }
  ]
  \coordinate (O) at (0,0);
  \clip(-3, -2.1) rectangle (3, 3);
  \draw[->] (-3,0) -- (3,0) coordinate[label = {below: }] (xmax);
  \draw[->] (0,-2) -- (0,3) coordinate[label = {right: }] (ymax);
  \path[name path=x] (0.3,0.5) -- (6.7,4.7);
  \path[name path=y] plot[smooth] coordinates {(-0.3,2) (2,1.5) (4,2.8) (6,5)};
  \scope[name intersections = {of = x and y, name = i}]
    \draw[color=black]      (0,0) -- (2.5,2.5) node[pos=0.8, below right] {};
    \draw[color=black]      (0,0) -- (-2.5,2.5) node[pos=0.8, below right] {};
    \draw[dashed] (-1.5,-0.75)--(2.5,1.25) node[pos=0, below]{$\rho$};
    \draw [blue,fill=blue] (0, 0) circle(0.35ex);
    \node [blue] at (0.3, -0.3) {$w$};
    \draw[color=black, dashed]      (-1,-0.5) -- (-1,1) node[pos=0.5, below left] {};
    \draw [blue,fill=blue] (-1, 0) circle(0.35ex);
    \draw [black,fill=black] (-1, 1) circle(0.35ex);
    \draw [black,fill=black] (-1, -0.5) circle(0.35ex);
    \node [blue] at (-0.8, 0.2) {$z$};
    \draw[decorate, decoration={brace, mirror, amplitude=5pt, raise=3pt},] (-1,1)-- node[black,midway, below left=0.25cm]
         {$\Breg{\ell_1}^{\rho}(z, w)$}(-1,-0.5);
% \node (title) at (0, 4) {$y = 0$ and $\Breg{\ell_1}(x, y) = [0, 2|x|]$};
    \endscope
\end{tikzpicture}%
\caption{Case $w = 0$ and some $\rho \in (-1, 1)$}
\label{fig:subgradient_2}
\end{subfigure}
\caption{Geometrical interpretation of Bregman divergence. {On the left plot the subgradient is uniquely defined and the Bregman divergence equals $2|z|$}. On the right plot the subgradient is a set, therefore the Bregman divergence can be any number from $0$ to $2|z|$ depending on the choice of $\rho$.} \label{fig:M1}
\end{figure}
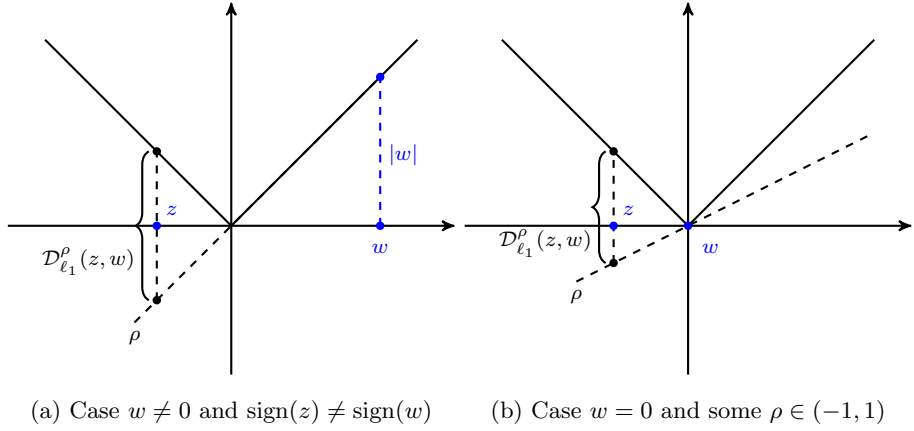
There is a simple geometrical interpretation of Bregman divergence, associated with the $\ell_1$-norm, which is illustrated by \Cref{fig:M1}.
% Notice, that a Bregman divergence is a multi-valued function, when $w = 0$ and is equal to zero, when $\sign(z) = \sign(w)$.
According to Property~\ref{prop:breg_via_subgrad} of ~\Cref{lemma:props_bregman_divergence}, Bregman divergence in~\Cref{eq:bregman_iteration_linear} can be evaluated as
\begin{equation*}
    \Breg{\ell_1}^{\hrho^{\lambda_1}}(\beta, {\hbeta}^{\lambda_1}) = \normin{\beta}_1 - \scalar{\hrho^{\lambda_1}}{\beta}\enspace,
\end{equation*}
where $\hrho^{\lambda_1}$ is a fixed subgradient of the $\ell_1$-norm evaluated at $\hbeta^{\lambda_1}$.
Since, the subdifferential of the $\ell_1$-norm is not uniquely defined when evaluated at zero, it is important to fix the way to pick a subgradient.
Possible, and probably the most obvious way to evaluate the subgradient is to write the KKT conditions for Problem~\eqref{eq:bregman_iteration_linear} and fix ${\hrho}^{\lambda_1}$ as follows
\begin{equation}
    \label{eq:refitting_subgradient}
    {\hrho}^{\lambda_1} = \frac{1}{\lambda_1 n}X^\top(y - X{\hbeta}^{\lambda_1})\enspace.
\end{equation}
Starting from here we stick to this choice of the subgradient.
\begin{proposition}
  \label{prop:bregman_as_Lasso}
    With the choice of the subgradient as in~\Cref{eq:refitting_subgradient} and setting $\bar y = y + \frac{\lambda_2}{\lambda_1}(y - X\hbeta^{\lambda_1})$, the \Bregman refitting step can be evaluated as
    \begin{align}
        \label{eq:bregman_as_Lasso}
        \hbeta^{\lambda_1, \lambda_2} \in \argmin_{\beta \in \bbR^p} \frac{1}{2n}\norm{\bar y - X\beta}_2^2 + \lambda_2\normin{\beta}_1\enspace.
    \end{align}
\end{proposition}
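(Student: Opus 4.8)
The plan is to reduce the \Bregman objective to a standard Lasso objective in the modified response $\bar y$ by a completion-of-square argument. The starting point is Property~\ref{prop:breg_via_subgrad} of~\Cref{lemma:props_bregman_divergence}, which lets me replace the Bregman divergence by an explicit linear expression, namely $\Breg{\ell_1}^{\hrho^{\lambda_1}}(\beta, \hbeta^{\lambda_1}) = \normin{\beta}_1 - \scalar{\hrho^{\lambda_1}}{\beta}$. After substituting the fixed subgradient from~\Cref{eq:refitting_subgradient}, the only part of the objective that couples to $\beta$ through $\hrho^{\lambda_1}$ is the linear term $-\lambda_2\scalar{\hrho^{\lambda_1}}{\beta}$, and the whole point is that this linear term is exactly the cross-term one picks up when expanding the squared loss around $\bar y$ instead of $y$.

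\textbf{Key step.} Concretely, I would expand the quadratic loss in~\Cref{eq:bregman_as_Lasso} as
\begin{equation*}
    \frac{1}{2n}\norm{\bar y - X\beta}_2^2 = \frac{1}{2n}\norm{y - X\beta}_2^2 - \frac{1}{n}\scalar{\bar y - y}{X\beta} + \frac{1}{2n}\big(\norm{\bar y}_2^2 - \norm{y}_2^2\big)\enspace.
\end{equation*}
Since $\bar y - y = \tfrac{\lambda_2}{\lambda_1}(y - X\hbeta^{\lambda_1})$, the middle term rewrites as
\begin{equation*}
    -\frac{1}{n}\scalar{\bar y - y}{X\beta} = -\frac{\lambda_2}{\lambda_1 n}\scalar{X^\top(y - X\hbeta^{\lambda_1})}{\beta} = -\lambda_2\scalar{\hrho^{\lambda_1}}{\beta}\enspace,
\end{equation*}
where the last equality is precisely the definition~\Cref{eq:refitting_subgradient} of the chosen subgradient. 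The final term $\tfrac{1}{2n}(\norm{\bar y}_2^2 - \norm{y}_2^2)$ does not depend on $\beta$.

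\textbf{Conclusion and main difficulty.} Adding $\lambda_2\normin{\beta}_1$ to both sides and combining with the identity from Property~\ref{prop:breg_via_subgrad} then shows that the \Bregman objective in~\Cref{eq:bregman_iteration_linear} and the Lasso objective in~\Cref{eq:bregman_as_Lasso} differ only by the constant $\tfrac{1}{2n}(\norm{\bar y}_2^2 - \norm{y}_2^2)$, which is independent of $\beta$. Since adding a constant does not change the set of minimizers, the two argmin sets coincide, which is the claim. There is no genuine obstacle here: the argument is a direct algebraic identity, and the only care needed is bookkeeping to confirm that the linear term generated by the shift from $y$ to $\bar y$ matches $-\lambda_2\scalar{\hrho^{\lambda_1}}{\beta}$ exactly, including the factor $\lambda_2/\lambda_1$, and that the residual terms are genuinely constant in $\beta$.
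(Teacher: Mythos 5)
Your proof is correct and follows essentially the same route as the paper: both rely on Property~\ref{prop:breg_via_subgrad} of~\Cref{lemma:props_bregman_divergence} together with the subgradient choice~\Cref{eq:refitting_subgradient} to identify the linear term $-\lambda_2\scalar{\hrho^{\lambda_1}}{\beta}$ with the cross-term from completing the square in $\bar y$, and both conclude by observing that the two objectives differ only by a $\beta$-independent constant. The only (immaterial) difference is the direction of the algebra—you expand $\tfrac{1}{2n}\normin{\bar y - X\beta}_2^2$ around $y$, whereas the paper expands the Bregman objective and collects the linear terms into $\bar y$.
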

{ \Cref{prop:bregman_as_Lasso} suggests that one can easily compute the Bregman Lasso refitting based on some Lasso solver using a modified response vector. Moreover, it
is important to notice that the Bregman refitting is a particular instance of our proposed framework.}
In particular the following result holds:
\begin{proposition}
    \label{lem:bregman_is_refitting}
    \Bregman refitting
     % $\hbeta^{\lambda_1, \lambda_2}$
     is a refitting strategy in the sense of~\Cref{def:refititng_vector}.
\end{proposition}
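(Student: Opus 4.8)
The plan is to exploit two elementary facts about the Bregman divergence: it is nonnegative, and it vanishes at its anchor point. The whole argument then reduces to plugging the Lasso solution $\hbeta^{\lambda_1}$ itself into the \Bregman objective~\eqref{eq:bregman_iteration_linear} as a feasible competitor and reading off the resulting inequality from the optimality of $\hbeta^{\lambda_1, \lambda_2}$.

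First I would observe that, directly from the definition in~\Cref{eq:definition_bregman_divergence}, one has
\begin{equation*}
    \Breg{\ell_1}^{\hrho^{\lambda_1}}(\hbeta^{\lambda_1}, \hbeta^{\lambda_1}) = \normin{\hbeta^{\lambda_1}}_1 - \normin{\hbeta^{\lambda_1}}_1 - \scalar{\hrho^{\lambda_1}}{\hbeta^{\lambda_1} - \hbeta^{\lambda_1}} = 0\enspace,
\end{equation*}
so that the value of the objective in~\eqref{eq:bregman_iteration_linear} evaluated at $\beta = \hbeta^{\lambda_1}$ is exactly $\tfrac{1}{2n}\norm{y - X\hbeta^{\lambda_1}}_2^2$. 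Since $\hbeta^{\lambda_1, \lambda_2}$ is by definition a minimizer of that same objective, comparing its value against the one attained at the feasible point $\hbeta^{\lambda_1}$ gives
\begin{equation*}
    \frac{1}{2n}\norm{y - X\hbeta^{\lambda_1, \lambda_2}}_2^2 + \lambda_2\Breg{\ell_1}^{\hrho^{\lambda_1}}(\hbeta^{\lambda_1, \lambda_2}, \hbeta^{\lambda_1}) \leq \frac{1}{2n}\norm{y - X\hbeta^{\lambda_1}}_2^2\enspace.
\end{equation*}
Because $\lambda_2 > 0$ and the Bregman divergence is nonnegative (Property~\ref{prop:breg_bounds} of~\Cref{lemma:props_bregman_divergence}), the second term on the left can only be dropped, which leaves $\tfrac{1}{2n}\norm{y - X\hbeta^{\lambda_1, \lambda_2}}_2^2 \leq \tfrac{1}{2n}\norm{y - X\hbeta^{\lambda_1}}_2^2$. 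Taking square roots yields precisely the refitting inequality of~\Cref{def:refititng_vector} with $\bbeta = \hbeta^{\lambda_1, \lambda_2}$ and $\hbeta = \hbeta^{\lambda_1}$.

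I do not anticipate any real obstacle here, as the statement is a direct consequence of the variational definition of the estimator. The only point that deserves a careful sentence is that the vanishing of the divergence at the anchor, as well as its nonnegativity, holds independently of the particular subgradient chosen; thus the fixed choice $\hrho^{\lambda_1}$ from~\Cref{eq:refitting_subgradient} is legitimate and the conclusion is unaffected by the non-uniqueness of the subdifferential of $\norm{\cdot}_1$ at zero.
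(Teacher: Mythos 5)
Your proof is correct and follows essentially the same route as the paper: both compare the minimizer $\hbeta^{\lambda_1, \lambda_2}$ against the feasible point $\hbeta^{\lambda_1}$, where the Bregman divergence vanishes, and then drop the nonnegative divergence term. The paper states the resulting inequality directly from optimality and nonnegativity of the divergence; your version merely makes the vanishing at the anchor and the subgradient-choice remark explicit, which changes nothing of substance.
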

{
\begin{theorem}
  \label{thm:bregman_racle}
  If the design matrix $X \in \bbR^{n \times p}$ satisfies the restricted eigenvalue condition RE($3$, $s$) and ${\lambda_1} = \tfrac{1+2r}{r}2\sigma\sqrt{{2\log{(p/\delta)}}/{n}}$ for some $\delta \in (0, 1)$, then with probability $1 - \delta$ the following bound holds:
  \[
  \frac{r + 2}{r}\frac{1}{n}\normin{X(\tbeta - \bbeta)}_2^2 + \frac{1}{n}\normin{X(\tbeta - \hbeta)}_2^2
      \leq \frac{\lambda^2_1 s}{\kappa^2(1, s)} + \frac{9\lambda^2_1 s}{4\kappa^2(3, s)}\enspace,
    \]
    where $r = \lambda_2/\lambda_1$ and $\bbeta = \hbeta^{\lambda_1, \lambda_2}$ is a Bregman refitting of Lasso solution $\hbeta = \hbeta^{\lambda_1}$.
\end{theorem}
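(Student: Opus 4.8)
The plan is to treat the \Bregman refitting as a Lasso-type estimator, run a basic-inequality argument against $\tbeta$, and then add the plain Lasso oracle bound of~\Cref{thm:oracle_lasso}. First I would use Property~\ref{prop:breg_via_subgrad} of~\Cref{lemma:props_bregman_divergence} to write the penalty as $\lambda_2\Breg{\ell_1}^{\hrho^{\lambda_1}}(\beta,\hbeta) = \lambda_2(\normin{\beta}_1 - \scalar{\hrho^{\lambda_1}}{\beta})$, so that $\bbeta = \hbeta^{\lambda_1,\lambda_2}$ minimises a smooth-plus-$\ell_1$ functional (equivalently, the modified-response Lasso of~\Cref{prop:bregman_as_Lasso}). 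Comparing the objective at $\bbeta$ and at $\tbeta$ and using $y = X\tbeta + \varepsilon$ gives, after cancelling $\normin{\varepsilon}_2^2$, the basic inequality
\[
\frac{1}{2n}\normin{X(\tbeta - \bbeta)}_2^2 + \lambda_2\Breg{\ell_1}^{\hrho^{\lambda_1}}(\bbeta,\hbeta) \le \frac{1}{n}\scalar{X^\top\varepsilon}{\bbeta - \tbeta} + \lambda_2\Breg{\ell_1}^{\hrho^{\lambda_1}}(\tbeta,\hbeta)\enspace.
\]
I would work throughout on the event $\mathcal{A} = \{\normin{X^\top\varepsilon/n}_\infty \le \lambda_0/2\}$ with $\lambda_0 := 2\sigma\sqrt{2\log(p/\delta)/n}$, which by~\Cref{lem:union_bound} has probability at least $1-\delta$; note that $\lambda_1 = \tfrac{1+2r}{r}\lambda_0 \ge \lambda_0$.

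On $\mathcal{A}$ the noise term is bounded by $\tfrac{\lambda_0}{2}\normin{\bbeta - \tbeta}_1$. The key structural input is the Lasso KKT identity of~\Cref{lem:kkt_lasso}, $\tfrac1n X^\top(y - X\hbeta) = \lambda_1\hrho^{\lambda_1}$, equivalently $\tfrac1n X^\top\varepsilon = \lambda_1\hrho^{\lambda_1} - \tfrac1n X^\top X(\tbeta - \hbeta)$; substituting it trades the stochastic term for the deterministic subgradient $\hrho^{\lambda_1}$ plus the Lasso residual $X(\tbeta-\hbeta)$. Writing $u = \tbeta - \bbeta$, $v = \tbeta - \hbeta$ and using the separability of the Bregman divergence (Property~\ref{prop:separ}) over $S = \supp(\tbeta)$, I would bound the $S$-part of $\Breg{\ell_1}^{\hrho^{\lambda_1}}(\tbeta,\hbeta)$ by a multiple of $\normin{v_S}_1$, and exploit the nonnegativity and sign structure of the $S^c$-part of $\Breg{\ell_1}^{\hrho^{\lambda_1}}(\bbeta,\hbeta)$ together with the identity above to absorb $\normin{u_{S^c}}_1$. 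The target of this bookkeeping is a cone inequality $\normin{u_{S^c}}_1 \le \normin{u_S}_1$, which makes RE($1$,$s$) applicable to $u$ and explains the appearance of $\kappa(1,s)$.

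Once the cone for $u$ is in hand, I would convert $\normin{u_S}_1 \le \sqrt{s}\,\normin{u_S}_2 \le \sqrt{s}\,\normin{Xu}_2/(\sqrt{n}\,\kappa(1,s))$ via RE($1$,$s$), and likewise $\normin{v_S}_1$ via the Lasso cone and RE($3$,$s$). Substituting into the basic inequality produces a quadratic inequality in $\normin{X(\tbeta-\bbeta)}_2/\sqrt n$ whose solution gives $\tfrac1n\normin{X(\tbeta-\bbeta)}_2^2 \lesssim \lambda_1^2 s/\kappa^2(1,s)$; carrying the constants through $\lambda_0 = \tfrac{r}{1+2r}\lambda_1$ and $\lambda_2 = r\lambda_1$ is what yields exactly the prefactor $\tfrac{r+2}{r}$ on the prediction term together with the constant $1$ in $\lambda_1^2 s/\kappa^2(1,s)$. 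Finally, since $\lambda_1 \ge \lambda_0$, the event $\mathcal{A}$ is contained in $\{\normin{X^\top\varepsilon/n}_\infty \le \lambda_1/2\}$, so~\Cref{thm:oracle_lasso} applied with tuning parameter $\lambda_1$ gives $\tfrac1n\normin{X(\tbeta-\hbeta)}_2^2 \le \tfrac{9\lambda_1^2 s}{4\kappa^2(3,s)}$; adding this to the bound for $u$ delivers the stated combined inequality.

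The main obstacle is the derivation of the cone condition for the refit direction $u$. Unlike the Lasso, the \Bregman penalty assigns \emph{zero} cost to perturbations along the active signs of $\hbeta$ (the coordinates where $\hrho^{\lambda_1}_j = \pm1$), so the Bregman term on its own does not control $\normin{u_{S^c}}_1$ on the spurious support of $\hbeta$. Taming those coordinates forces one to couple the subgradient choice~\eqref{eq:refitting_subgradient} with the Lasso solution and to re-use the Lasso cone implicit in~\Cref{thm:oracle_lasso}; this is the delicate step, and it is precisely what sharpens the cone constant to $1$, hence $\kappa(1,s)$, rather than the looser $\kappa(3,s)$ of the plain Lasso analysis.
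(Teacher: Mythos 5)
There is a genuine gap, and it sits exactly where you flagged the ``delicate step'': the standalone cone inequality $\normin{u_{S^c}}_1 \le \normin{u_S}_1$ for the refit direction $u = \tbeta - \bbeta$ is not derivable by the means you sketch, and your assignment of the two RE constants is the reverse of what the argument actually yields. When you substitute the Lasso KKT identity $\tfrac1n X^\top\varepsilon = \lambda_1\hrho^{\lambda_1} - \tfrac1n X^\top X(\tbeta-\hbeta)$ into the noise term of the Bregman basic inequality, you create the cross term $\tfrac1n u^\top X^\top X (\tbeta - \hbeta)$, which couples the refit error to the Lasso error and cannot be dropped. Since, as you yourself observe, the Bregman penalty costs nothing along the active-sign directions of the equicorrelation set, the only control of $\normin{u_{S^c}}_1$ comes through this coupling; handling the cross term (via polarization or $2ab \le a^2+b^2$) produces a \emph{joint} inequality of the form $\lambda_1\bigl(2\normin{\hat\Delta_S}_1 - 2\normin{\hat\Delta_{S^c}}_1 + 3\normin{\bar\Delta_S}_1 - \normin{\bar\Delta_{S^c}}_1\bigr)$ on the right, where $\hat\Delta = \tbeta - \hbeta$ and $\bar\Delta = \tbeta - \bbeta$. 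In this joint inequality the \emph{refit} error lives in the cone with constant $3$ (hence $\kappa(3,s)$ and the $\tfrac{9}{4\kappa^2(3,s)}$ term), while it is the \emph{Lasso} error that gets the sharpened cone constant $1$ (hence $\kappa(1,s)$) --- exactly opposite to your claim that the bookkeeping sharpens the cone for $u$ to constant $1$. The paper's remark following the theorem confirms this reading: the improvement $\kappa(1,s)$ pertains to the Lasso bound, not to the refitting.

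A second, related problem is structural: your plan decouples the theorem into two separately provable bounds --- $\tfrac{r+2}{r}\tfrac1n\normin{X u}_2^2 \le \lambda_1^2 s/\kappa^2(1,s)$ on its own, plus \Cref{thm:oracle_lasso} at parameter $\lambda_1$ for the Lasso error --- and then adds them. The second ingredient is fine on your event $\mathcal{A}$ (since $\lambda_1 \ge \lambda_0$, the proof of \Cref{thm:oracle_lasso} goes through at $\lambda_1$), but the first is a strictly stronger statement than the theorem and is not available: because of the coupling above, only the \emph{weighted sum} of the two prediction errors can be controlled, and even the refined version of the result is a dichotomy (with probability $1-\delta$ at least one of two inequalities holds), not a pair of simultaneous standalone bounds. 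The correct route keeps the two optimality conditions together from the start: pair the Lasso KKT with the Bregman KKT $\tfrac1n X^\top X(\tbeta - \bbeta) + \tfrac1n X^\top\varepsilon = \lambda_2(\brho - \hrho)$, test them against $\bbeta - \hbeta$ and $\tbeta - \bbeta$, and sum with the Lasso basic inequality weighted by $r$; the constant $\tfrac{r+2}{r}$, the amplified noise factor $\tfrac{1+2r}{r}$ (matching your event $\mathcal{A}$), and both RE terms then fall out of a single completed-squares step rather than from adding two independent oracle bounds.
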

}

{The proof is postponed to \Cref{sec:appendix_proofs}, we only mention that an important step in the proof is to obtain the following inequality:
\begin{align}
    \label{eq:refitting_bregman_quantification}
    \normin{X(\hbeta - \bbeta)}_2^2 + \normin{y - X\bbeta}_2^2 \leq \normin{y - X\hbeta}_2^2\enspace,
\end{align}
which quantifies the improvement for the data-fitting term.
Several interesting consequences can be mentioned based on~\Cref{thm:bregman_racle}:
first notice that the oracle inequality depends weakly on the second parameter $\lambda_2$, which indicates that the Bregman refitting is not sensitive to the second parameter;
also, the rate on the right hand side $9\lambda^2 s/4\kappa^2(3, s)$ is matching the Lasso upper bound up to a constant term (to the cost of a lower probability).
\begin{remark}
  A careful analysis of the proof suggests a stronger result:
  under the assumptions of~\Cref{thm:bregman_racle}, with probability $1 - \delta$ at least one of the following inequalities holds:
  \begin{align*}
    \frac{r + 2}{r}\frac{1}{n}\normin{X(\tbeta - \bbeta)}_2^2 + \frac{1}{n}\norm{X(\tbeta - \hbeta)}_2^2 &\leq \frac{9\lambda^2_1 s}{4\kappa^2(3, s)}\enspace,\\
    \frac{1}{n}\normin{X(\tbeta - \hbeta)}_2^2 &\leq \frac{9\lambda^2_1 s}{4\kappa^2(1, s)}\enspace,
  \end{align*}
    where $r = \lambda_2/\lambda_1$ and $\bbeta = \hbeta^{\lambda_1, \lambda_2}$ is the Bregman refitting of the Lasso solution $\hbeta = \hbeta^{\lambda_1}$.
    Note that although one cannot tell apart which one of these two inequalities holds, the prediction bounds are improved in both cases.
    In case the first inequality is true and the classical Lasso bound is tight the Bregman refitting improves on the Lasso.
    In case the second inequality holds then the Lasso bound from \Cref{thm:oracle_lasso} can be improved using a better constant $\kappa^2(1, s)$ instead of $\kappa^2(3, s)$, which can differ significantly.
\end{remark}
}

%%%%%%%%%%%%%%%%%%%%%%%%%%%%%%%%%%%%%%%%%%%%%%%%%%%%%%%%%%%%%%%%%%%%%%%%%%%%%%%
\subsubsection{Geometrical interpretation}
\label{subsec:geometrical_interpretation}
%%%%%%%%%%%%%%%%%%%%%%%%%%%%%%%%%%%%%%%%%%%%%%%%%%%%%%%%%%%%%%%%%%%%%%%%%%%%%%%
It is known that~\eqref{eq:Lasso} can be equivalently written in the following form
\begin{equation}
    \label{eq:MLE_with_ell_1_constrained}
    \begin{aligned}
        & {\hbeta}^{T_1}\in \argmin_{{\beta} \in \bbR^p}
        \frac{1}{2n}\norm{y - X\beta}_2^2 \\
        & \text{s.t.}
        \quad \norm{\beta}_{1} \leq T_1\enspace,
    \end{aligned}
\end{equation}
for some $T_1 = T_1(\lambda_1) \geq 0$.
Similarly, we can write the \Bregman refitting step~\Cref{eq:bregman_iteration_linear} in the constrained form as
\begin{equation}
    \label{eq:refitting_with_ell_1_constrained}
    \begin{aligned}
        & {\hbeta}^{T_1, T_2}\in \argmin_{{\beta} \in \bbR^p}
         \frac{1}{2n}\norm{y - X\beta}_2^2 \\
        & \text{s.t.}
        \quad \Breg{\ell_1}^{\hrho^{T_1}}(\beta, \hbeta^{T_1}) \leq T_2\enspace,
    \end{aligned}
\end{equation}
where $\hbeta^{T_1}$ is a solution of the constrained version given in~\Cref{eq:MLE_with_ell_1_constrained}.
The choice $T_2 = 0$ is considered in~\citep{Brinkmann_Burger_Rasch_Satour16}, which corresponds to $\lambda_2$ being large enough in~\Cref{eq:bregman_iteration_linear}.
To provide a geometrical intuition we consider the simple case where $n = p = 2$.
We denote by $\hrho^{T_1}$ the subgradient of the $\ell_1$-norm evaluated at the first-step estimator ${\hbeta}^{T_1}$.
Assume, that the first-step estimator is given, therefore we consider two principal scenarios (case with negative values could be obtained symmetrically):
\begin{itemize}
    \item When ${\hbeta}^{T_1}_1 > 0$, ${\hbeta}^{T_1}_2 = 0$, the feasible set of Problem~\eqref{eq:refitting_with_ell_1_constrained} is given by
    \begin{equation*}
        \{(\beta_1,\beta_2)^\top \in \bbR \times \bbR : |\beta_1| - \beta_1 + |\beta_2| - \hrho^{T_1}_2\beta_2 \leq T_2 \}\enspace.
    \end{equation*}
    Notice that any positive value for $\beta_1$ is admissible in this case. We illustrate this phenomenon on~\Cref{fig:sparse_lasso_solution_small_bregman}, \Cref{fig:sparse_lasso_solution_large_bregman} and \Cref{fig:T2_is_zero} for various values of $T_2$.
    \item When ${\hbeta}^{T_1}_1 > 0$, ${\hbeta}^{T_1}_2 > 0$, the feasible set of Problem~\eqref{eq:refitting_with_ell_1_constrained} is given by
    \begin{equation*}
        \{(\beta_1,\beta_2)^\top \in \bbR \times \bbR :  |\beta_1| - \beta_1 + |\beta_2| - \beta_2 \leq T_2\}\enspace.
    \end{equation*}
    Notice that every pair $(\beta_1, \beta_2)^\top \in \bbR^{+}\times\bbR^{+}$ is inside of the feasible set.
    We illustrate this phenomenon on~\Cref{fig:not_sparse_lasso_solution}.
\end{itemize}

\begin{figure}[t!]
\centering
        \begin{subfigure}[b]{0.49\textwidth}
        \centering
        \includegraphics[trim={1cm 2cm 1cm 1.5cm},clip,width=0.99\textwidth]{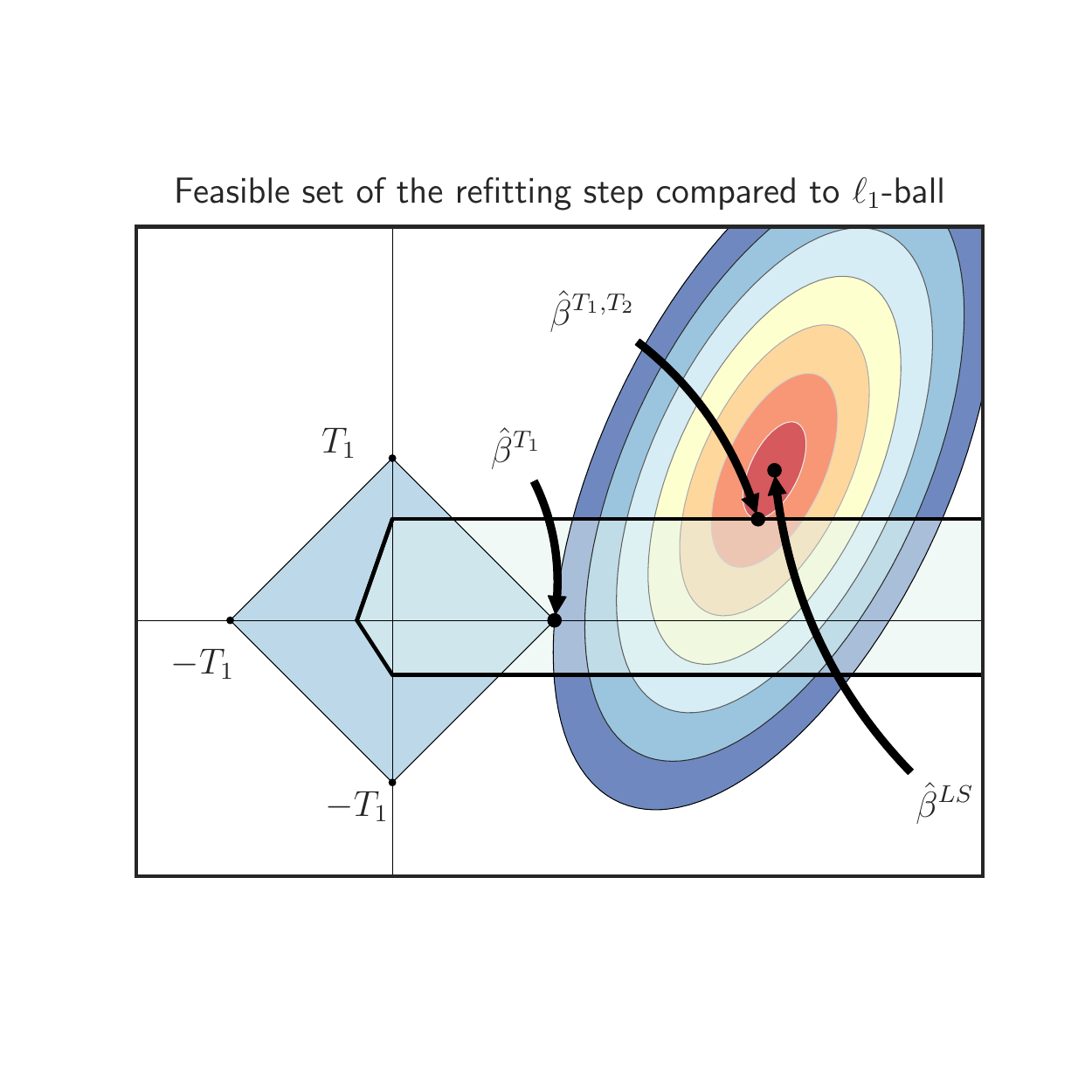}
        \caption{Sparse Lasso solution, $T_2$ - small.}
        \label{fig:sparse_lasso_solution_small_bregman}
        \end{subfigure}
        \begin{subfigure}[b]{0.49\textwidth}
        \centering
        \includegraphics[trim={1cm 2cm 1cm 1.5cm},clip,width=0.99\textwidth]{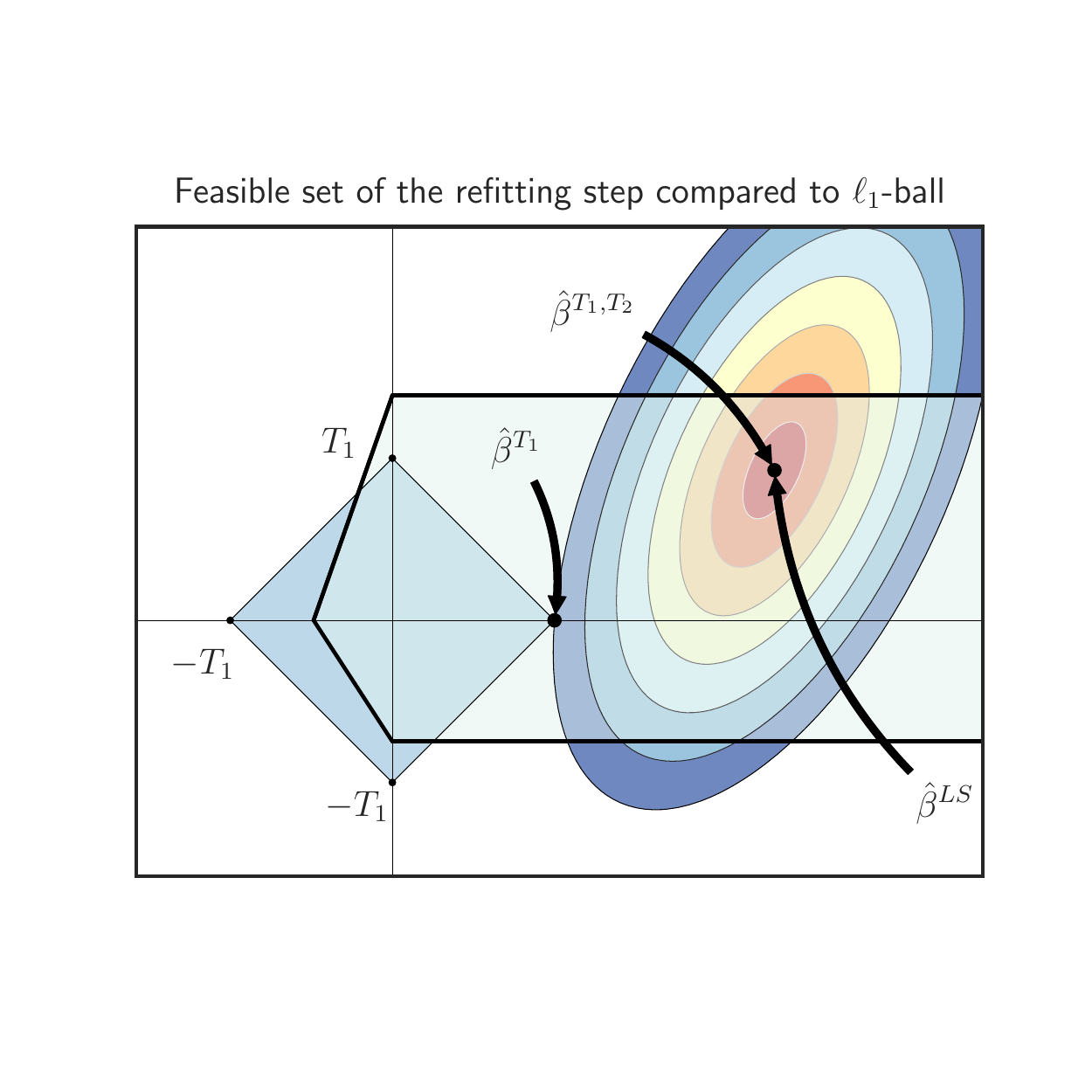}
        \caption{Sparse Lasso solution, $T_2$ - large.}
        \label{fig:sparse_lasso_solution_large_bregman}
        \end{subfigure}
        \begin{subfigure}[b]{0.49\textwidth}
        \centering
        \includegraphics[trim={1cm 2cm 1cm 1.5cm},clip,width=0.99\textwidth]{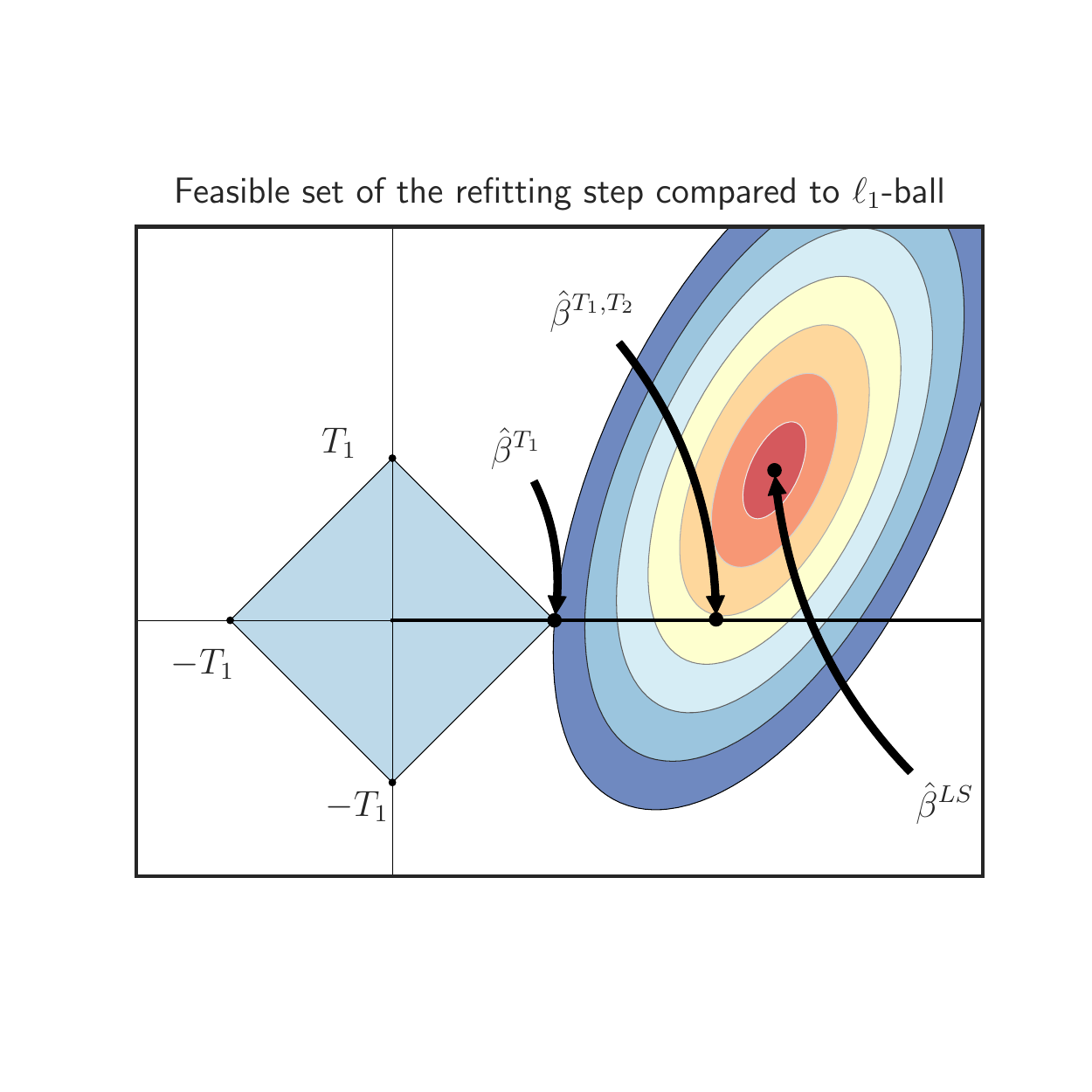}
        \caption{Sparse Lasso solution, $T_2 = 0$.}
        \label{fig:T2_is_zero}
        \end{subfigure}
        \begin{subfigure}[b]{0.49\textwidth}
        \centering
        \includegraphics[trim={1cm 2cm 1cm 1.5cm},clip,width=0.99\textwidth]{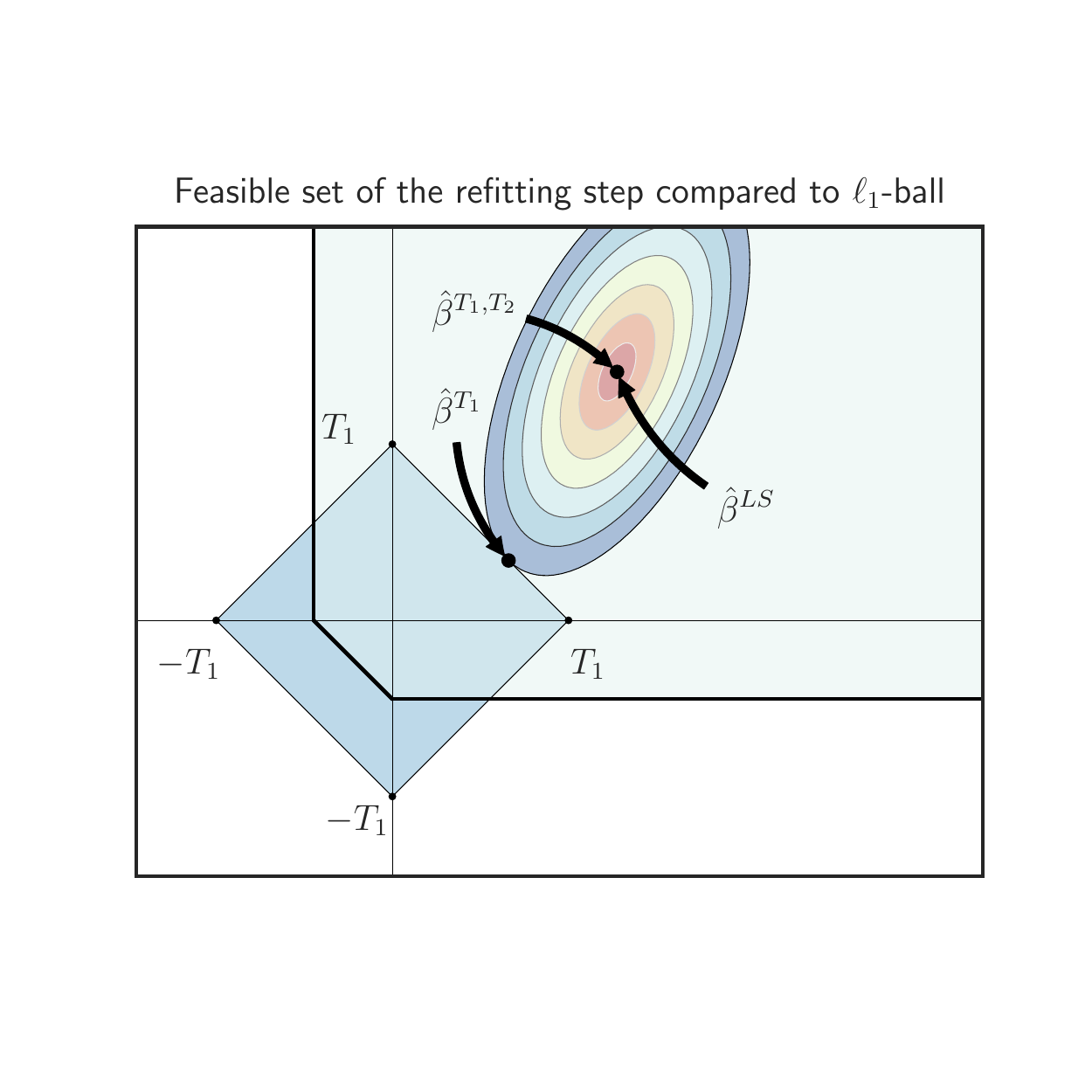}
        \caption{Dense Lasso solution.}
        \label{fig:not_sparse_lasso_solution}
        \end{subfigure}
        \caption{Feasible set of the first and the refitting steps. Here, the ellipses are levels of the objective function and $\hbeta^{LS}$ is the least squares estimator. Geometry of the feasible set for the refitting step is described in terms of $T_2$ and the subgradient $\hrho^{T_1}$.}
\end{figure}
In the Lasso case, if $T_1=0$ there is only one trivial solution $\hbeta^{T_1} = 0$, this is not the case for the refitting step.
Indeed, if $T_2 = 0$ the feasible set of the refitting step might be non-trivial, depending on the first step Lasso solution.
More precisely, if there exists $j_1 \in [p]$, such that $\hbeta^{T_1}_{j_1} \neq 0$ the feasible set of the refitting step always contains $\{\beta : \beta_{j_1} \leq 0, \beta_{j_2} = 0 \ \forall j_2 \neq j_1\}$ or $\{\beta : \beta_{j_1} \geq 0, \beta_{j_2} = 0 \ \forall j_2 \neq j_1\}$, depending on $\sign(\hbeta^{T_1}_{j_1})$.

%%%%%%%%%%%%%%%%%%%%%%%%%%%%%%%%%%%%%%%%%%%%%%%%%%%%%%%%%%%%%%%%%%%%%%%%%%%%%%%
%%%%%%%%%%%%%%%%%%%%%%%%%%%%%%%%%%%%%%%%%%%%%%%%%%%%%%%%%%%%%%%%%%%%%%%%%%%%%%%
\subsubsection{Orthogonal design}
\label{sec:linear_regression_with_orthogonal_design}
%%%%%%%%%%%%%%%%%%%%%%%%%%%%%%%%%%%%%%%%%%%%%%%%%%%%%%%%%%%%%%%%%%%%%%%%%%%%%%%
%%%%%%%%%%%%%%%%%%%%%%%%%%%%%%%%%%%%%%%%%%%%%%%%%%%%%%%%%%%%%%%%%%%%%%%%%%%%%%%

In this section we
% we consider
% a particular member of the exponential family~\cref{eq:canon_exp} to
investigate some important properties of the algorithm in~\cref{eq:bregman_iteration_linear} for the denoising model (\ie we drop the statistical convention and instead assume that $n=p$ and $X=I_p$)
$y = \beta^* + \varepsilon$,
where $y, \beta^*  \in \bbR^p$, $\varepsilon \sim \mathcal{N}\big(0, \sigma^2\Id_p\big)$.
The following estimators correspond to~\cref{eq:Lasso} and ~\cref{eq:bregman_iteration_linear} respectively
\begin{align}
    \label{eq:orthogonal_lasso}
    \hbeta^{\lambda_1} &= \argmin_{\beta \in \bbR^p} \frac{1}{2}\norm{y - \beta}_2^2 + \lambda_1\norm{\beta}_1\enspace,\\
    \label{eq:orthogonal_refitting}
    \hbeta^{\lambda_1, \lambda_2} &= \argmin_{\beta \in \bbR^p} \frac{1}{2}\norm{y - \beta}_2^2 + \lambda_2\Big(\norm{\beta}_1 - \scalar{\hrho^{\lambda_1}}{\beta}\Big)\enspace,
\end{align}
where the subgradient $\hrho^{\lambda_1}$ is given by~\cref{eq:refitting_subgradient} and simplifies to
\begin{equation}
    \label{eq:orthogonal_subgradient}
    \hrho^{\lambda_1} = \frac{y - \hbeta^{\lambda_1}}{\lambda_1}\enspace.
\end{equation}
We remind that for orthogonal design, the Lasso estimator is simply a soft-thresholding version of the observation  $\hbeta^{\lambda_1}=\ST(y, \lambda_1)$, where $\ST(\cdot, \cdot)$ is defined component-wise for any $j\in [p]$ by:
% \begin{equation}
%     \label{eq:soft_threshold}
 $   \ST(y, \lambda_1)_j = \sign(y_j)(|y_j| - \lambda_1)_+$.
% \end{equation}

The next proposition shows that the subgradient follows the signs of $y$.
\begin{proposition}
    \label{prop:orthogonal_subgradien_same_sign}
    For all $\lambda_1 > 0$ we have $\sign(y_j) = \sign(\hrho^{\lambda_1}_j),  \forall j \in [p]$.
\end{proposition}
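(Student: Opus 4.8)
The plan is to reduce the statement to a coordinatewise scalar computation by exploiting the closed form of the Lasso solution under orthogonal design. Since $\hbeta^{\lambda_1} = \ST(y, \lambda_1)$ acts separately on each coordinate and the subgradient is $\hrho^{\lambda_1} = (y - \hbeta^{\lambda_1})/\lambda_1$ by~\cref{eq:orthogonal_subgradient}, the claim decouples across indices. I would therefore fix $j \in [p]$ and argue only about the scalars $y_j$, $\hbeta^{\lambda_1}_j$ and $\hrho^{\lambda_1}_j$.

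The key intermediate identity I would establish is
\[ y_j - \hbeta^{\lambda_1}_j = \sign(y_j)\,\min(|y_j|, \lambda_1), \]
which formalizes the fact that soft-thresholding only shrinks $y_j$ towards zero without ever crossing it, so that the residual keeps the direction of $y_j$. This is immediate from the two branches of $\ST$: when $|y_j| > \lambda_1$ we have $\hbeta^{\lambda_1}_j = \sign(y_j)(|y_j| - \lambda_1)$, whence the residual equals $\sign(y_j)\lambda_1$; when $|y_j| \le \lambda_1$ we have $\hbeta^{\lambda_1}_j = 0$, whence the residual equals $y_j = \sign(y_j)|y_j|$. In both regimes the right-hand side is $\sign(y_j)\min(|y_j|, \lambda_1)$, and the boundary case $|y_j| = \lambda_1$ falls into the second branch.

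Dividing by $\lambda_1 > 0$ then yields $\hrho^{\lambda_1}_j = \sign(y_j)\,\min(|y_j|, \lambda_1)/\lambda_1$, where the scalar factor $\min(|y_j|, \lambda_1)/\lambda_1$ lies in $(0, 1]$ when $y_j \neq 0$ and vanishes when $y_j = 0$. Hence, for $y_j \neq 0$, the number $\hrho^{\lambda_1}_j$ is a strictly positive multiple of $y_j$, so that $\sign(\hrho^{\lambda_1}_j) = \sign(y_j)$ as singletons; and for $y_j = 0$ both $\sign(y_j)$ and $\sign(\hrho^{\lambda_1}_j)$ equal the full interval $[-1, 1]$ by the convention~\cref{eq:definition_signs}. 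Collecting this over all $j \in [p]$ gives the assertion. I do not anticipate any genuine difficulty here; the only point requiring care is the set-valued definition of $\sign$ at zero, which is precisely why I treat $y_j = 0$ as a separate case.
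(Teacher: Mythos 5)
Your proof is correct and follows essentially the same route as the paper: the paper likewise reduces to a coordinatewise computation, writing $\hrho^{\lambda_1}_i = \sign(y_i)$ when $|y_i| \geq \lambda_1$ and $\hrho^{\lambda_1}_i = y_i/\lambda_1$ when $|y_i| < \lambda_1$, which is exactly your identity $\hrho^{\lambda_1}_j = \sign(y_j)\min(|y_j|, \lambda_1)/\lambda_1$ split into its two branches. Your explicit treatment of the set-valued case $y_j = 0$ is a small point of care that the paper's one-line proof leaves implicit, but it changes nothing substantive.
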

The following results states that the Bregman refitting also enjoys close properties to the Lasso but thresholds a translated response vector.
This relation can be obviously proved using~\Cref{eq:bregman_as_Lasso} of~\Cref{prop:bregman_as_Lasso}, which establishes that the \Bregman can be seen as Lasso solution applied to a modified signal.
\begin{proposition}
    \label{prop:solution_of_refitting_with_orthogonal design}
    The solution of the \Bregman refitting step in~\Cref{eq:orthogonal_refitting} relies on the soft-threshold operator and reads:\, $\hbeta^{\lambda_1, \lambda_2} = \ST(y + \lambda_2 \hrho^{\lambda_1}, \lambda_2)$.
\end{proposition}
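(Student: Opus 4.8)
The plan is to reduce the Bregman refitting problem in~\Cref{eq:orthogonal_refitting} to an ordinary soft-thresholding problem and then read off the closed form. First I would invoke~\Cref{prop:bregman_as_Lasso}: it states that the Bregman refitting coincides with a Lasso estimator computed on the modified response $\bar y = y + \tfrac{\lambda_2}{\lambda_1}(y - X\hbeta^{\lambda_1})$. Specializing to the denoising model $X = I_p$ and recalling the explicit subgradient from~\Cref{eq:orthogonal_subgradient}, namely $\hrho^{\lambda_1} = (y - \hbeta^{\lambda_1})/\lambda_1$, I would simplify $y - X\hbeta^{\lambda_1} = y - \hbeta^{\lambda_1} = \lambda_1 \hrho^{\lambda_1}$, so that $\bar y = y + \lambda_2 \hrho^{\lambda_1}$.

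Alternatively, and to sidestep the mismatch between the $1/n$ normalization used in~\Cref{prop:bregman_as_Lasso} and the unnormalized objective of~\Cref{eq:orthogonal_refitting}, I would derive $\bar y$ directly by completing the square. Collecting the terms of the objective that are linear and quadratic in $\beta$ gives $\tfrac{1}{2}\norm{\beta}_2^2 - \scalar{y + \lambda_2\hrho^{\lambda_1}}{\beta}$ plus a constant, which equals $\tfrac{1}{2}\normin{(y + \lambda_2\hrho^{\lambda_1}) - \beta}_2^2$ up to a term independent of $\beta$. Hence the Bregman refitting step is equivalent to minimizing $\tfrac{1}{2}\normin{(y + \lambda_2\hrho^{\lambda_1}) - \beta}_2^2 + \lambda_2\norm{\beta}_1$.

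Finally, I would apply the standard closed form of the orthogonal Lasso recalled just above the statement: the minimizer of $\tfrac{1}{2}\normin{\bar y - \beta}_2^2 + \lambda_2\norm{\beta}_1$ is the soft-thresholding $\ST(\bar y, \lambda_2)$, taken component-wise. Substituting $\bar y = y + \lambda_2\hrho^{\lambda_1}$ yields exactly $\hbeta^{\lambda_1, \lambda_2} = \ST(y + \lambda_2\hrho^{\lambda_1}, \lambda_2)$, as claimed.

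I do not anticipate a genuine obstacle here: the argument is essentially a change of variables followed by the textbook soft-thresholding formula. The only point requiring care is the bookkeeping of the normalization constant, since~\Cref{prop:bregman_as_Lasso} is stated with the $1/n$ factor whereas the orthogonal objectives in~\Cref{eq:orthogonal_lasso,eq:orthogonal_refitting} are written without it; completing the square directly in~\Cref{eq:orthogonal_refitting} avoids any ambiguity and is the route I would take to be safe.
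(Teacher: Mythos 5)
Your proposal is correct and follows essentially the same route as the paper: the appendix proof completes the square in the objective of~\Cref{eq:orthogonal_refitting} to rewrite it as $\tfrac{1}{2}\normin{y + \lambda_2\hrho^{\lambda_1} - \beta}_2^2 + \lambda_2\norm{\beta}_1$ up to constants, then applies the orthogonal soft-thresholding formula, exactly as in your second derivation. Your first route via~\Cref{prop:bregman_as_Lasso} is also the one the paper itself flags as an obvious alternative just before the statement, and your care about the $1/n$ normalization mismatch is well placed and correctly resolved by the direct computation.
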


It is worth mentioning, that in the Lasso case, there exists a so called $\lambda_{1, \max}$, which is the smallest value of regularization parameter for which the solution $\hbeta^{\lambda_1} = 0$ for all $\lambda_1 > \lambda_{1, \max}$.
However, even though the refitting step can be formulated as Lasso problem, there is not such parameter $\lambda_{2, \max}$.
It can be counter intuitive on the first sight, but since the parameter $\lambda_2$ is present inside the data-fitting term, it becomes clear that such
extreme value does not exist.

\begin{figure}[t!]
\centering
    \begin{subfigure}[b]{0.49\textwidth}
    \centering
    \includegraphics[width=0.99\textwidth]{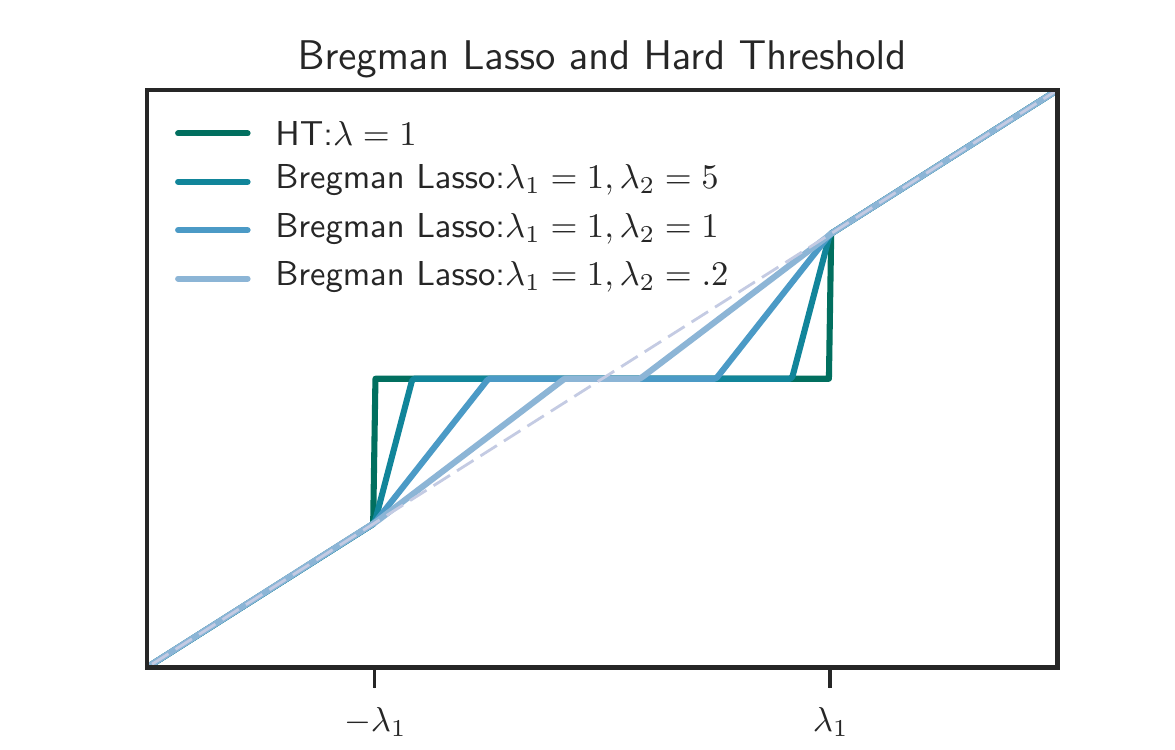}
    \caption{Fixed $\lambda_1$, various $\lambda_2$.}
    \label{fig:2bregman_hard}
    \end{subfigure}
    \begin{subfigure}[b]{0.49\textwidth}
    \centering
    \includegraphics[width=0.99\textwidth]{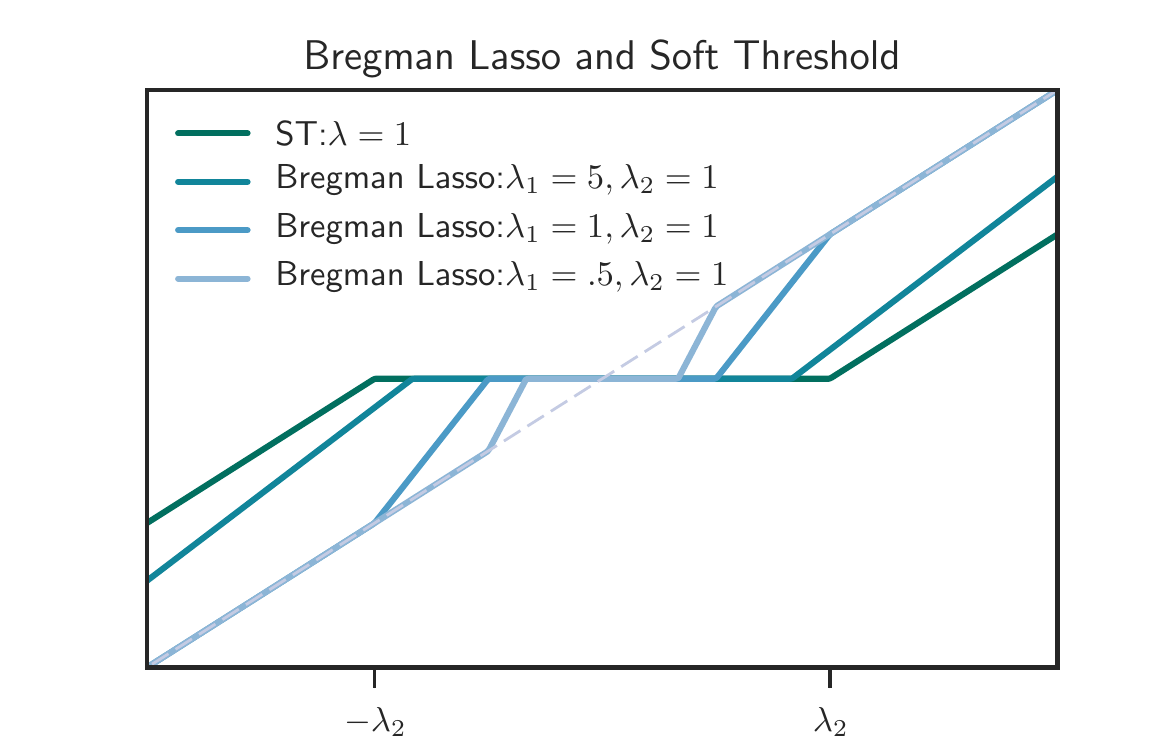}
    \caption{Fixed $\lambda_2$, various $\lambda_1$.}
    \label{fig:2bregman_soft}
    \end{subfigure}
    \caption{The solution of the refitting step is equivalent to the MCP penalty. Extreme cases include both hard and soft threshold operators.}
\end{figure}
In the sequel we provide another interpretation of the \Bregman refitting in terms of firm-thresholding operator, introduced and analyzed by~\citet{Gao_Bruce_97}.
We additionally mention, that the firm-thresholding operator is the solution of the least squares problem penalized with MCP regularization~\citep{Zhang10} (for orthogonal design), which is a non-convex problem.
Additionally, the firm-thresholding operator outperforms soft/hard-threshold in terms of bias-variance trade-off, see~\citep{Gao_Bruce_97} for theoretical and numerical analysis.
\begin{proposition}
    \label{prop:orthogonal_refitting_is_mcp}
    The solution of the \Bregman refitting step in case of orthogonal design is given by: \, $\hbeta^{\lambda_1, \lambda_2} = \MCP(y, \lambda_H, 1 + \tfrac{\lambda_1}{\lambda_2})$,
    where $\lambda_H = (1/\lambda_1 + 1/\lambda_2)^{-1}$ and the firm-thresholding operator $\MCP$ is defined for $\mu > 0$, $\gamma > 1$ component-wise for any $j\in [p]$ by:
    \begin{equation}\label{eq:firm_thresholding}
        \MCP(y, \mu, \gamma)_j = \begin{cases}
                                            \frac{\gamma}{\gamma - 1}\ST(y_j, \mu), \quad &\abs{y_j}\leq \mu \gamma \\
                                            y_j, \quad &\abs{y_j} > \mu \gamma
                                     \end{cases}\enspace.
    \end{equation}
    In the \Bregman (orthogonal) case $\mu = \lambda_H$ and $\gamma = 1 + \lambda_1/\lambda_2=\lambda_1/\lambda_H$.
\end{proposition}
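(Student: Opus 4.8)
The plan is to exploit the fact that for orthogonal design everything decouples coordinate by coordinate, so I would fix an index $j \in [p]$ and reduce the claim to a scalar identity in $y_j$. The starting point is Proposition \ref{prop:solution_of_refitting_with_orthogonal design}, which already gives $\hbeta^{\lambda_1, \lambda_2}_j = \ST(y_j + \lambda_2 \hrho^{\lambda_1}_j, \lambda_2)$, so the entire task reduces to evaluating the shifted argument $y_j + \lambda_2 \hrho^{\lambda_1}_j$ and feeding it into the scalar soft-threshold.

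The first substantive step is to compute the subgradient $\hrho^{\lambda_1}_j = (y_j - \hbeta^{\lambda_1}_j)/\lambda_1$ from \eqref{eq:orthogonal_subgradient}, using $\hbeta^{\lambda_1}_j = \ST(y_j, \lambda_1)$. This splits into two regimes: when $|y_j| \leq \lambda_1$ the Lasso coordinate vanishes and $\hrho^{\lambda_1}_j = y_j/\lambda_1$, whereas when $|y_j| > \lambda_1$ a short cancellation yields $\hrho^{\lambda_1}_j = \sign(y_j)$. Plugging each regime into $\ST(\cdot, \lambda_2)$ then gives, on the one hand, $\ST\big(y_j(\lambda_1+\lambda_2)/\lambda_1,\, \lambda_2\big)$ and, on the other hand, $\ST\big(\sign(y_j)(|y_j| + \lambda_2),\, \lambda_2\big) = y_j$.

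The final step is to massage the first expression into firm-thresholding form. Writing the soft-threshold out, its argument is active exactly when $|y_j| > \lambda_1\lambda_2/(\lambda_1 + \lambda_2) = \lambda_H$, and in that case a direct simplification gives $\sign(y_j)\tfrac{(\lambda_1+\lambda_2)|y_j| - \lambda_1\lambda_2}{\lambda_1}$. I would then match this against definition \eqref{eq:firm_thresholding} with $\mu = \lambda_H$ and $\gamma = 1 + \lambda_1/\lambda_2$, checking that $\gamma/(\gamma - 1) = (\lambda_1+\lambda_2)/\lambda_1$ and $(\lambda_1+\lambda_2)\lambda_H = \lambda_1\lambda_2$, so that the branch $\tfrac{\gamma}{\gamma-1}\ST(y_j, \lambda_H)$ reproduces the computed expression verbatim.

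The observation that makes all branches line up, and the only point requiring genuine care, is the identity $\mu\gamma = \lambda_H(1 + \lambda_1/\lambda_2) = \lambda_1$: the threshold at which the subgradient switches from its linear regime $y_j/\lambda_1$ to the constant $\sign(y_j)$ coincides exactly with the kink of the firm-thresholding operator. Once this alignment is recognized, verifying that $|y_j| > \lambda_1$ produces the identity map $y_j$ (the unshrunk regime of $\MCP$) and that $|y_j| \leq \lambda_H$ produces zero in both expressions is immediate, and the remaining algebra is routine.
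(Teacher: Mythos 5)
Your proposal is correct and follows essentially the same route as the paper: both start from Proposition~\ref{prop:solution_of_refitting_with_orthogonal design}, split into the regimes $|y_j|\leq\lambda_1$ (where $\hrho^{\lambda_1}_j = y_j/\lambda_1$) and $|y_j|>\lambda_1$ (where $\hrho^{\lambda_1}_j = \sign(y_j)$), and match the resulting expressions to the firm-thresholding branches via the identities $\mu\gamma = \lambda_1$ and $\gamma/(\gamma-1) = (\lambda_1+\lambda_2)/\lambda_1$. The only cosmetic difference is that you carry $\sign(y_j)$ and $|y_j|$ throughout instead of treating $y_j>0$ and handling negatives by symmetry as the paper does.
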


\begin{remark}
    \label{rem:extreme_cases_of_the_MCP_operator}
    We notice the following behavior of the solution, described in~\Cref{prop:orthogonal_refitting_is_mcp}:
    \begin{itemize}
        \item $\hbeta^{\lambda_1, \lambda_2} \rightarrow \HT(y, \lambda_1)$, when $\lambda_2 \rightarrow \infty$,
        \item $\hbeta^{\lambda_1, \lambda_2} \rightarrow \ST(y, \lambda_2)$, when $\lambda_1 \rightarrow \infty$.
    \end{itemize}
    These properties are illustrated on~\Cref{fig:2bregman_hard,fig:2bregman_soft} for several values of $\lambda_1$ and $\lambda_2$.
\end{remark}
{We would like to emphasize that the Bregman Lasso refitting formulated in the orthogonal case, \ie \Cref{eq:firm_thresholding}, coincides with the Firm Thresholding \cite{Gao_Bruce_97}.
In the extreme case where $\lambda_2 \rightarrow \infty$, one recovers the Hard Thresholding. Note that these two cases are connected to non-convex regularizations corresponding to the MCP \cite{Zhang10} and the $\ell_0$.}
For the sake of completeness, we additionally provide the analysis for the Bregman Iterations (orthogonal case) in the form of~\Cref{eq:old_bregman_iteration} to give insights on our motivation to consider two-step iteration with two tuning parameters $\lambda_1, \lambda_2$.
Similar analysis can be found in~\citep{Yin_Osher_Goldfarb_Darbon08, Xu_Osher07}.
Following the proof of~\Cref{prop:solution_of_refitting_with_orthogonal design} one can prove by induction that the following result holds for the Bregman Iterations defined in~\Cref{eq:old_bregman_iteration}.
\begin{proposition}
    For any $\lambda > 0$ and $k > 0$, the Bregman Iteration~\cref{eq:old_bregman_iteration} is given by
    \begin{align*}
        \hbeta_{k+1} = \ST(y + \lambda\hrho_{k}, \lambda)\enspace.
    \end{align*}
\end{proposition}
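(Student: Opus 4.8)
The plan is to prove the identity by induction on $k$, reusing the completing-the-square computation from the proof of \Cref{prop:solution_of_refitting_with_orthogonal design}. The essential tool is Property~\ref{prop:breg_via_subgrad} of~\Cref{lemma:props_bregman_divergence}, which lets me rewrite the Bregman divergence appearing in~\eqref{eq:old_bregman_iteration} as $\Breg{\ell_1}^{\hrho_{k-1}}(\beta, \hbeta_{k-1}) = \normin{\beta}_1 - \scalar{\hrho_{k-1}}{\beta}$. This substitution is only legitimate when $\hrho_{k-1}$ is a genuine subgradient of the $\ell_1$-norm at $\hbeta_{k-1}$, so the induction must carry two statements simultaneously: the membership $\hrho_{k-1} \in \partial\normin{\hbeta_{k-1}}_1$, and the closed form for $\hbeta_k$. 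In the base case $k = 0$ one has $\hrho_0 = \hbeta_0 = 0_p$ and $0 \in [-1,1]^p = \partial\normin{0}_1$, so the subgradient invariant holds trivially.

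For the inductive step, assuming $\hrho_{k-1} \in \partial\normin{\hbeta_{k-1}}_1$, I specialize~\eqref{eq:old_bregman_iteration} to the orthogonal convention ($X = I_p$, data-fitting term $\tfrac12\normin{y - \beta}_2^2$, dual update $\hrho_{k} = \hrho_{k-1} + \tfrac{1}{\lambda}(y - \hbeta_k)$ as in~\eqref{eq:orthogonal_subgradient}) and apply Property~\ref{prop:breg_via_subgrad} to reduce the $k$-th objective to
\[
    \tfrac{1}{2}\normin{y - \beta}_2^2 + \lambda\big(\normin{\beta}_1 - \scalar{\hrho_{k-1}}{\beta}\big)\enspace.
\]
Completing the square in the quadratic-plus-linear part, i.e.\ absorbing $-\lambda\scalar{\hrho_{k-1}}{\beta}$ into $\tfrac12\normin{y-\beta}_2^2$, turns this, up to an additive constant independent of $\beta$, into the orthogonal-design Lasso objective $\tfrac{1}{2}\normin{(y + \lambda\hrho_{k-1}) - \beta}_2^2 + \lambda\normin{\beta}_1$. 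Its minimizer is the soft-threshold $\hbeta_k = \ST(y + \lambda\hrho_{k-1}, \lambda)$; re-indexing $k \mapsto k+1$ then gives the claimed $\hbeta_{k+1} = \ST(y + \lambda\hrho_k, \lambda)$.

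To close the induction I must check that the updated dual variable is again admissible. The first-order optimality condition for the $k$-th step reads $\hbeta_k - y + \lambda\big(\partial\normin{\hbeta_k}_1 - \hrho_{k-1}\big) \ni 0$, which rearranges to $\hrho_{k-1} + \tfrac{1}{\lambda}(y - \hbeta_k) \in \partial\normin{\hbeta_k}_1$. Since the left-hand side is exactly the update $\hrho_k$ prescribed in~\eqref{eq:old_bregman_iteration}, we obtain $\hrho_k \in \partial\normin{\hbeta_k}_1$, restoring the invariant and completing the induction. The only genuine obstacle is precisely maintaining this subgradient membership along the recursion: once it is in place the computation reduces to the single-step case of \Cref{prop:solution_of_refitting_with_orthogonal design}, and the engineered form of the dual update is what guarantees that Property~\ref{prop:breg_via_subgrad} remains applicable at every iteration.
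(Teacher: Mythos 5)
Your proof is correct and takes essentially the same route as the paper, whose argument for this proposition is exactly the one-line remark that the claim follows by induction from the completing-the-square computation in \Cref{prop:solution_of_refitting_with_orthogonal design}, i.e.\ rewriting each iterate as an orthogonal-design Lasso applied to the shifted signal $y + \lambda\hrho_{k-1}$. Your explicit maintenance of the invariant $\hrho_k \in \partial\normin{\hbeta_k}_1$ via the KKT conditions is sound bookkeeping that the paper leaves implicit; note only that it is not strictly needed for the closed form, since $\Breg{\ell_1}^{\hrho_{k-1}}(\beta, \hbeta_{k-1})$ differs from $\normin{\beta}_1 - \scalar{\hrho_{k-1}}{\beta}$ by a term constant in $\beta$, so the minimizer is the stated soft-threshold regardless.
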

We also prove that for every $k > 1$, firm-thresholding yields to Bregman Iterations.
\begin{proposition}\label{prop:breg_isfirw}
    For any $\lambda > 0$ and $k > 0$, the solution generated by the Bregman Iterations~\cref{eq:old_bregman_iteration} is given by
    \begin{align*}
        \hbeta_{k+1} = \MCP(y, \tfrac{\lambda}{k+1}, \tfrac{k+1}{k}) = \begin{cases}
        (k+1)\ST(y, \tfrac{\lambda}{k+1}), &\text{ if } |y_j| \leq \tfrac{\lambda}{k}\\
        y_j&\text{ if } |y_j| > \tfrac{\lambda}{k}
        \end{cases}\enspace.
    \end{align*}
\end{proposition}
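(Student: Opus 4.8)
The plan is to exploit orthogonality to reduce the whole problem to a scalar recursion, then prove a closed form for the dual (subgradient) variable $\hrho_k$ by induction, from which the stated firm-thresholding formula for $\hbeta_{k+1}$ drops out immediately. Since $X = I_p$ and $n = p$, both the minimization defining $\hbeta_k$ in~\eqref{eq:old_bregman_iteration} and the subgradient update separate across coordinates, and $\ST(\cdot,\lambda)$ acts component-wise, so I fix one coordinate and write $y$ for $y_j$. By the antisymmetry $\ST(-y,\lambda) = -\ST(y,\lambda)$ of the whole recursion I may assume $y > 0$. The previous proposition already supplies the soft-threshold relation $\hbeta_{k+1} = \ST(y + \lambda\hrho_k, \lambda)$, while the dual update (in the orthogonal normalization of this section, cf.~\eqref{eq:orthogonal_subgradient}) reads $\hrho_{k+1} = \hrho_k + \tfrac{1}{\lambda}(y - \hbeta_{k+1})$ with $\hrho_0 = \hbeta_0 = 0$. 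Thus everything collapses to analyzing this scalar pair $(\hbeta_k,\hrho_k)$.

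The key step, and the only place where something must be guessed rather than computed, is to identify the right invariant for the dual variable. I claim that for $y > 0$,
\[
\hrho_k = \begin{cases} ky/\lambda, & y \leq \lambda/k,\\ 1, & y > \lambda/k,\end{cases}
\]
i.e.\ $\hrho_k$ is the clipping of $ky/\lambda$ to $[0,1]$, and I would establish this simultaneously with the formula for $\hbeta_{k+1}$ by induction on $k$. Granting this form of $\hrho_k$, I plug it into $\hbeta_{k+1} = \ST(y + \lambda\hrho_k, \lambda)$ and split on the threshold: in the regime $y \leq \lambda/k$ one has $y + \lambda\hrho_k = (k+1)y$, so $\hbeta_{k+1} = \big((k+1)y - \lambda\big)_+ = (k+1)\ST(y, \tfrac{\lambda}{k+1})$; in the regime $y > \lambda/k$ one has $y + \lambda\hrho_k = y + \lambda$, so $\hbeta_{k+1} = (y + \lambda - \lambda)_+ = y$. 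This is exactly the displayed piecewise expression, and its identification with $\MCP(y, \tfrac{\lambda}{k+1}, \tfrac{k+1}{k})$ is a one-line check in~\eqref{eq:firm_thresholding}: with $\mu = \tfrac{\lambda}{k+1}$ and $\gamma = \tfrac{k+1}{k}$ one has $\mu\gamma = \tfrac{\lambda}{k}$ (matching the case boundary) and $\tfrac{\gamma}{\gamma - 1} = k+1$ (matching the multiplicative factor).

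To close the induction I must propagate the invariant from $k$ to $k+1$ through the dual update, and here the single regime $y \leq \lambda/k$ used above splits again at $\lambda/(k+1)$. For $y \leq \lambda/(k+1)$ one has $\hbeta_{k+1} = 0$, whence $\hrho_{k+1} = \tfrac{ky}{\lambda} + \tfrac{1}{\lambda}(y - 0) = \tfrac{(k+1)y}{\lambda}$, which is $\leq 1$; for $\lambda/(k+1) < y \leq \lambda/k$ one has $\hbeta_{k+1} = (k+1)y - \lambda$, whence $\hrho_{k+1} = \tfrac{ky}{\lambda} + \tfrac{1}{\lambda}\big(\lambda - ky\big) = 1$; and for $y > \lambda/k$ the value $\hbeta_{k+1} = y$ gives $\hrho_{k+1} = 1 + \tfrac{1}{\lambda}(y - y) = 1$. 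This is precisely the invariant at index $k+1$, completing the step; the base case is the direct computation $\hbeta_1 = \ST(y,\lambda)$ and $\hrho_1 = \tfrac{1}{\lambda}\big(y - \ST(y,\lambda)\big)$, which has the claimed clipped form. I do not expect a genuine obstacle here, since the nontrivial soft-threshold relation is already provided by the previous proposition; the only real subtlety is guessing the dual invariant and tracking the nested case boundaries $\lambda/(k+1)$ versus $\lambda/k$, which is why I carry the statement for $\hrho_k$ alongside the one for $\hbeta_{k+1}$ rather than trying to induct on $\hbeta_{k+1}$ alone.
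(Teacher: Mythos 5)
Your proof is correct, and it follows the paper's overall route: reduce to a single coordinate by separability, take $y>0$ by oddness of the recursion, and induct on $k$ through the relation $\hbeta_{k+1} = \ST(y + \lambda\hrho_k, \lambda)$ supplied by the preceding proposition. The one genuine organizational difference lies in how the subgradient is handled. The paper's induction hypothesis is the closed form of the primal iterates $\hbeta_m$ for all $m \le k$, and at each step it recomputes $\hrho_k$ from scratch by unrolling the telescoping sum $\hrho_k = \tfrac{1}{\lambda}\sum_{m=1}^{k}(y - \hbeta_m)$, which forces a three-way case analysis including the interior regime $\lambda/m^* < y \le \lambda/(m^*-1)$ for $2 \le m^* < k$. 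You instead strengthen the inductive hypothesis to carry the explicit dual invariant $\hrho_k = \min(ky/\lambda,\,1)$ and propagate it through the one-step update $\hrho_{k+1} = \hrho_k + \tfrac{1}{\lambda}(y - \hbeta_{k+1})$. The two arguments establish the same facts in a different order --- the paper's regime-by-regime sums evaluate exactly to your clipped-linear form --- but your bookkeeping is lighter: it eliminates the summation identities and the interior-$m^*$ case entirely, at the cost of having to guess the invariant up front (which, as you note, is the only non-mechanical step). Your case splits at $\lambda/(k+1)$ and $\lambda/k$, the base case $\hbeta_1 = \ST(y,\lambda)$, $\hrho_1 = \tfrac{1}{\lambda}(y - \ST(y,\lambda))$, and the parameter check $\mu\gamma = \lambda/k$, $\gamma/(\gamma - 1) = k+1$ in \Cref{eq:firm_thresholding} are all correct, so the argument is complete as written.
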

Analyzing the previous result helps motivating the introduction of the \Bregman refitting in the form of \cref{eq:bregman_iteration_linear}.
Indeed, for orthogonal design, \Bregman refitting generalizes Bregman Iterations~\cref{eq:old_bregman_iteration} in the sense that for each pair $(k, \lambda)$ there exists a pair $(\lambda_1 = \tfrac{\lambda}{k-1}, \lambda_2=\lambda)$ such that $\hbeta_k = \hbeta^{\lambda_1, \lambda_2}$. In particular, \Bregman refitting includes all possible solution of Bregman Iterations~\cref{eq:old_bregman_iteration}, while the reverse is not true, for instance when $k$ is not an integer.

%%%%%%%%%%%%%%%%%%%%%%%%%%%%%%%%%%%%%%%%%%%%%%%%%%%%%%%%%%%%%%%%%%%%%%%%%%%%%%%
%%%%%%%%%%%%%%%%%%%%%%%%%%%%%%%%%%%%%%%%%%%%%%%%%%%%%%%%%%%%%%%%%%%%%%%%%%%%%%%
\section{\SLSLasso}
\label{sec:arbitrary_design}
%%%%%%%%%%%%%%%%%%%%%%%%%%%%%%%%%%%%%%%%%%%%%%%%%%%%%%%%%%%%%%%%%%%%%%%%%%%%%%%
%%%%%%%%%%%%%%%%%%%%%%%%%%%%%%%%%%%%%%%%%%%%%%%%%%%%%%%%%%%%%%%%%%%%%%%%%%%%%%%
In this section we provide some generalizations of the results obtained in~\Cref{sec:linear_regression_with_orthogonal_design}.
We first notice that in the case of orthogonal design, discussed in~\Cref{sec:linear_regression_with_orthogonal_design}, for a given vector $y$, there exists a value of the regularization parameter $\lambda_2$ such that, the refitting step~\cref{eq:bregman_iteration_linear} is equivalent to hard-thresholding.
One might expect that for the case of arbitrary design there exists such a value of $\lambda_2$ that the refitting step~\cref{eq:bregman_iteration_linear} is equivalent to a Least-Squares refitting on the support obtained via the first Lasso type step as it adds a sign constraints.
Yet, the estimator obtained via the refitting step slightly differs from the simple Least-Squares refitting.
To state our main result of this section, let us introduce some notation.
Consider $\hrho^{\lambda_1}$, defined in~\cref{eq:refitting_subgradient}, we define the equicorrelation set~\citep{Tibshirani13} as
\begin{align}
    \label{eq:equicorrelation_set}
    E^{\lambda_1} = E(\hrho^{\lambda_1}) = \{j \in [p] : |\hrho^{\lambda_1}_j| = 1\}\enspace.
\end{align}
We will omit $\lambda_1$ in $E^{\lambda_1}$ and write $E$ instead for simplicity.
We associate the following \SLSLasso refitting step with the equicorrelation set given by~\cref{eq:equicorrelation_set}
\begin{definition}[\SLSLasso]
For any $\lambda_1 > 0$ we call $\slsbeta$ a \SLSLasso refitting if it satisfies $\slsbeta_{E^{c}} = 0$ and
\begin{align}
  \label{eq:sls_estim}
    \slsbeta_{E} \in \argmin_{\beta_{E} \in \bbR^{|E|} : \hrho^{\lambda_1}_{E} \odot \beta_{E} \geq 0} \frac{1}{2n}\normin{y - X_E\beta_E}_2^2\enspace,
\end{align}
where $|E|$ is the cardinality of $E$ and $\hrho^{\lambda_1}$ is the Lasso subgradient defined in~\Cref{eq:refitting_subgradient}.
\end{definition}
Notice, that the refitting is performed on the equicorrelation set and not on the support of the Lasso solution.
Possible motivation to consider the equicorrelation set instead of the support can be described by uniqueness issues of the Lasso~\citep{Tibshirani13}, while the equicorrelation set (and the signs) is always uniquely defined.
\begin{proposition}
    The \SLSLasso is a sign consistent refitting strategy of the Lasso solution $\hbeta^{\lambda_1}$ in the sense of~\Cref{def:sign_refitting_vector}.
\end{proposition}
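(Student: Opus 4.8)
The plan is to verify directly that $\slsbeta$ satisfies the two defining conditions of \Cref{def:sign_refitting_vector} with $\hbeta = \hbeta^{\lambda_1}$ and $\lambda = \lambda_1$, namely the sign-consistency relation~\cref{eq:sign_refitting} and the refitting inequality~\cref{eq:refitting}. The whole argument hinges on one structural observation, which I would establish first: the support of the Lasso solution is contained in the equicorrelation set, and on that support the sign of $\hbeta^{\lambda_1}$ agrees with $\hrho^{\lambda_1}$. Indeed, from the KKT conditions (\Cref{lem:kkt_lasso}) written through~\cref{eq:refitting_subgradient}, we have $\tfrac1n X^\top(y - X\hbeta^{\lambda_1}) = \lambda_1 \hrho^{\lambda_1}$ with $\hrho^{\lambda_1} \in \sign(\hbeta^{\lambda_1})$. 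Hence for any $j$ with $\hbeta^{\lambda_1}_j \neq 0$ we get $\hrho^{\lambda_1}_j = \sign(\hbeta^{\lambda_1}_j) \in \{-1, 1\}$, so $j \in E$ by definition~\cref{eq:equicorrelation_set}. This shows $\supp(\hbeta^{\lambda_1}) \subseteq E$, and in particular $X\hbeta^{\lambda_1} = X_E\hbeta^{\lambda_1}_E$.

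For the sign-consistency~\cref{eq:sign_refitting}, using~\cref{eq:refitting_subgradient} the requirement $\tfrac1n X^\top(y - X\hbeta^{\lambda_1}) \in \lambda_1\sign(\slsbeta)$ reduces to $\hrho^{\lambda_1} \in \sign(\slsbeta)$, which I would check componentwise. For $j \in E^c$ we have $|\hrho^{\lambda_1}_j| < 1$ and $\slsbeta_j = 0$ by definition of the \SLSLasso, so $\hrho^{\lambda_1}_j \in [-1,1] = \sign(0)$. For $j \in E$ we have $|\hrho^{\lambda_1}_j| = 1$, and the feasibility constraint $\hrho^{\lambda_1}_E \odot \slsbeta_E \geq 0$ in~\cref{eq:sls_estim} forces $\hrho^{\lambda_1}_j \slsbeta_j \geq 0$; thus either $\slsbeta_j = 0$ (and $\hrho^{\lambda_1}_j \in [-1,1] = \sign(0)$) or $\sign(\slsbeta_j) = \hrho^{\lambda_1}_j$. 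In every case $\hrho^{\lambda_1}_j \in \sign(\slsbeta_j)$, so the sign-consistency holds, and it does so automatically by construction of the feasible set.

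For the refitting inequality~\cref{eq:refitting}, the key step is to observe that $\hbeta^{\lambda_1}_E$ is itself feasible for the optimization problem~\cref{eq:sls_estim}: for $j \in E$ with $\hbeta^{\lambda_1}_j \neq 0$ we have $\hrho^{\lambda_1}_j \hbeta^{\lambda_1}_j = \sign(\hbeta^{\lambda_1}_j)\hbeta^{\lambda_1}_j = |\hbeta^{\lambda_1}_j| \geq 0$, while for $j \in E$ with $\hbeta^{\lambda_1}_j = 0$ the product vanishes; hence $\hrho^{\lambda_1}_E \odot \hbeta^{\lambda_1}_E \geq 0$. Since $\slsbeta_E$ minimizes $\tfrac{1}{2n}\normin{y - X_E\beta_E}_2^2$ over a feasible set containing $\hbeta^{\lambda_1}_E$, it follows that $\normin{y - X_E\slsbeta_E}_2 \leq \normin{y - X_E\hbeta^{\lambda_1}_E}_2$. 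Finally, using $\slsbeta_{E^c} = 0$ so that $X\slsbeta = X_E\slsbeta_E$, together with $X\hbeta^{\lambda_1} = X_E\hbeta^{\lambda_1}_E$ from the first paragraph, I conclude $\normin{y - X\slsbeta}_2 \leq \normin{y - X\hbeta^{\lambda_1}}_2$, which is precisely the refitting property.

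The argument is essentially direct once the support-containment lemma is in place; the only point requiring care, and which I view as the crux, is verifying that $\hbeta^{\lambda_1}_E$ lies in the sign-constrained feasible set of~\cref{eq:sls_estim}, since this is what simultaneously couples the Lasso signs to the constraint and delivers the descent in the data-fitting term. Everything else is bookkeeping on the equicorrelation set versus the support.
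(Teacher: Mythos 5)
Your proof is correct and takes essentially the same route as the paper's: the heart of both arguments is that the feasible set of~\cref{eq:sls_estim} coincides with $\{\beta : \hrho^{\lambda_1} \in \sign(\beta)\}$, so the sign-consistency condition~\cref{eq:sign_refitting} holds automatically by construction (the paper states exactly this set identity and invokes~\Cref{rem:sign_refitting}). Your write-up is in fact more complete than the paper's one-line proof, which leaves the refitting inequality~\cref{eq:refitting} implicit: your observations that $\supp(\hbeta^{\lambda_1}) \subseteq E$ with $\hrho^{\lambda_1}_j = \sign(\hbeta^{\lambda_1}_j)$ on the support (via the KKT conditions and~\cref{eq:refitting_subgradient}), and hence that $\hbeta^{\lambda_1}_E$ is feasible for~\cref{eq:sls_estim}, supply precisely the details needed to get $\normin{y - X\slsbeta}_2 \leq \normin{y - X\hbeta^{\lambda_1}}_2$ from optimality.
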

\begin{proof}
    Notice that we have the relation
    $\{\beta_{E} \in \bbR^{|E|} : \hrho^{\lambda_1}_{E} \odot \beta_{E} \geq 0\} = \{\beta_{E} \in \bbR^{|E|} : \hrho^{\lambda_1} \in \sign(\beta)\}$,
    and using~\Cref{rem:sign_refitting} we conclude.
\end{proof}
{ We believe that the introduced Sign-Least-Squares refitting of the Lasso is an interesting and simple alternative to the Least-Squares Lasso.
Indeed, both methods achieve similar theoretical guarantees and their computational costs are equivalent (since the estimated supports are generally small).
Moreover, the Sign-Least-Squares refitting of the Lasso might be more appealing to practitioners since confusing signs switches are no longer possible.}
%%%%%%%%%%%%%%%%%%%%%%%%%%%%%%%%%%%%%%%%%%%%%%%%%%%%%%%%%%%%%%%%%%%%%%%%%%%%%%%
%%%%%%%%%%%%%%%%%%%%%%%%%%%%%%%%%%%%%%%%%%%%%%%%%%%%%%%%%%%%%%%%%%%%%%%%%%%%%%%
\subsection{\Bregman as \SLSLasso}
\label{subsec:bregman_as_sls}
%%%%%%%%%%%%%%%%%%%%%%%%%%%%%%%%%%%%%%%%%%%%%%%%%%%%%%%%%%%%%%%%%%%%%%%%%%%%%%%
%%%%%%%%%%%%%%%%%%%%%%%%%%%%%%%%%%%%%%%%%%%%%%%%%%%%%%%%%%%%%%%%%%%%%%%%%%%%%%%

In this section we provide connections between the \Bregman and the \SLSLasso.
Since Problem~\eqref{eq:sls_estim} is convex and Slater's condition is satisfied therefore the KKT conditions state~\citep{Boyd_Vandenberghe04} that there exist $\slsbeta_{E} \in \bbR^{|E|} \text{ and } \hat{\mu} \in \bbR^{|E|}_+$, such that
\begin{equation}
\label{eq:KKT}
    \begin{cases}
        \frac{1}{n}X^\top_{E}\big(y - X_{E}\slsbeta_{E}\big) + \hrho^{\lambda_1}_{E} \odot \hat{\mu} = 0 \\
        \hat{\mu} \odot \slsbeta_{E} \odot \hrho^{\lambda_1}_{E} = 0\\
        \hat{\mu} \geq 0
    \end{cases}\enspace,
\end{equation}
so that we can provide the following connection:
\begin{theorem}
\label{th:SLS}
 For any signal vector $y \in \bbR^n$ and any design matrix $X \in \bbR^{n \times p}$ there exists $\lambda_0 > 0$ such that for any $\lambda_2 \geq \lambda_0$ the solution of the refitting step~\cref{eq:bregman_iteration_linear} is given by  $\hbeta^{\lambda_1, \lambda_2} = \slsbeta$.
\end{theorem}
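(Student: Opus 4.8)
The plan is to recognize the \Bregman refitting~\eqref{eq:bregman_iteration_linear} as an \emph{exact penalty} reformulation of the \SLSLasso problem~\eqref{eq:sls_estim}, and then to verify the first-order optimality conditions directly. First I would use Property~\ref{prop:breg_via_subgrad} of~\Cref{lemma:props_bregman_divergence} together with the separability Property~\ref{prop:separ} to write the penalty component-wise as
\[
\Breg{\ell_1}^{\hrho^{\lambda_1}}(\beta, \hbeta^{\lambda_1}) = \sum_{j=1}^p \big(|\beta_j| - \hrho^{\lambda_1}_j\beta_j\big)\enspace.
\]
For $j \in E$ one has $|\hrho^{\lambda_1}_j| = 1$, so the $j$-th term vanishes exactly when $\hrho^{\lambda_1}_j\beta_j \geq 0$ and equals $2|\beta_j|$ otherwise; for $j \in E^c$ one has $|\hrho^{\lambda_1}_j| < 1$, so the $j$-th term is at least $(1 - |\hrho^{\lambda_1}_j|)|\beta_j| \geq 0$, vanishing only at $\beta_j = 0$. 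Hence the penalty is nonnegative and vanishes precisely on the feasible set $\{\beta : \beta_{E^c} = 0,\ \hrho^{\lambda_1}_E \odot \beta_E \geq 0\}$ of~\eqref{eq:sls_estim}. This identifies the \Bregman objective as the \SLSLasso objective plus a penalty that is zero exactly on the admissible region.

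Next I would show that a fixed \SLSLasso solution $\slsbeta$, together with its KKT multiplier $\hat\mu$ from~\eqref{eq:KKT}, satisfies the subdifferential optimality condition of the (convex) \Bregman problem once $\lambda_2$ is large enough. Writing the optimality condition of~\eqref{eq:bregman_iteration_linear} component-wise gives
\[
\tfrac{1}{n}X_j^\top(y - X\beta) \in \lambda_2\big(\sign(\beta_j) - \hrho^{\lambda_1}_j\big)\enspace.
\]
I would check this at $\beta = \slsbeta$, recalling that $X\slsbeta = X_E\slsbeta_E$ since $\slsbeta_{E^c} = 0$, in three cases. For $j \in E$ with $\slsbeta_j \neq 0$, feasibility gives $\hrho^{\lambda_1}_j\slsbeta_j = |\slsbeta_j| > 0$, so complementary slackness forces $\hat\mu_j = 0$; the first line of~\eqref{eq:KKT} then yields $\tfrac1n X_j^\top(y - X\slsbeta) = 0$ while $\sign(\slsbeta_j) = \hrho^{\lambda_1}_j$, and the membership holds for every $\lambda_2$. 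For $j \in E$ with $\slsbeta_j = 0$, the first line of~\eqref{eq:KKT} gives $\tfrac1n X_j^\top(y - X\slsbeta) = -\hrho^{\lambda_1}_j\hat\mu_j$ with $\hat\mu_j \geq 0$, and since $\sign(0) - \hrho^{\lambda_1}_j = [-1,1] - \hrho^{\lambda_1}_j$ the membership reduces to the single scalar condition $\hat\mu_j \leq 2\lambda_2$. For $j \in E^c$ the quantity $c_j := \tfrac1n X_j^\top(y - X\slsbeta)$ is a fixed finite number, and the target set $\lambda_2[-1 - \hrho^{\lambda_1}_j,\, 1 - \hrho^{\lambda_1}_j]$ contains a neighborhood of the origin (as $|\hrho^{\lambda_1}_j| < 1$) that dilates to all of $\bbR$, so the membership holds once $\lambda_2 \geq |c_j|/(1 - |\hrho^{\lambda_1}_j|)$.

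Finally I would set
\[
\lambda_0 = \max\Big\{\max_{j \in E,\ \slsbeta_j = 0}\tfrac{\hat\mu_j}{2},\ \max_{j \in E^c}\tfrac{|c_j|}{1 - |\hrho^{\lambda_1}_j|}\Big\}\enspace,
\]
which is finite because $\slsbeta$ and $\hat\mu$ do not depend on $\lambda_2$ and $1 - |\hrho^{\lambda_1}_j| > 0$ on $E^c$. For any $\lambda_2 \geq \lambda_0$ all three cases hold simultaneously, so $\slsbeta$ satisfies the optimality condition of~\eqref{eq:bregman_iteration_linear}; since that objective is convex (convexity of the Bregman term from~\Cref{lemma:props_bregman_divergence} plus the convex quadratic data-fitting term), the condition is sufficient for global optimality and we may take $\hbeta^{\lambda_1, \lambda_2} = \slsbeta$. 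The main obstacle I anticipate is the bookkeeping on $E^c$: the equicorrelation set is defined through $\hrho^{\lambda_1}$ rather than through $\supp(\hbeta^{\lambda_1})$, so one must argue carefully that the $E^c$-components of $\slsbeta$ are pinned to zero and that the residual correlations $c_j$ stay within the dilating interval, together with computing the sets $\sign(\slsbeta_j) - \hrho^{\lambda_1}_j$ correctly at the kinks, which is where the factor $2$ and the strict inequality $|\hrho^{\lambda_1}_j| < 1$ become essential.
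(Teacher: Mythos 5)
Your proof is correct and follows essentially the same route as the paper's: both verify the first-order optimality conditions of the Bregman problem~\eqref{eq:bregman_iteration_linear} at $\slsbeta$ using the KKT system~\eqref{eq:KKT}, splitting into the cases $\hat\mu_j = 0$ and $\hat\mu_j > 0$ on $E$, and handling $j \in E^c$ by letting $\lambda_2$ grow since $|\hrho^{\lambda_1}_j| < 1$ there. Your write-up is marginally more explicit than the paper's—you give a closed-form threshold $\lambda_0$ (with the sharp constant $\hat\mu_j/2$ where the paper uses the sufficient condition $\lambda_2 \geq \hat\mu_j$) and add the exact-penalty interpretation of the Bregman divergence as motivation—but the underlying argument is the same.
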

The proof the this theorem is below. However we mention that it states that given $\lambda_1>0$, \Bregman and \SLSLasso often coincide. The value of the parameter $\lambda_0$ in \Cref{th:SLS} is explicit and can be found at the end of the following proof.

\begin{proof}
    We start by writing the KKT conditions for \Bregman,~\cref{eq:bregman_iteration_linear}:
    \begin{equation}
    \label{eq:fermat}
        \begin{cases}
            \hrho^{\lambda_1}_{E} + \frac{1}{\lambda_2 n}X_{E}^\top \big(y - X_{E}\hbeta^{\lambda_1, \lambda_2}_{E} - X_{E^c} \hbeta^{\lambda_1, \lambda_2}_{E^{c}}\big) \in \sign(\hbeta^{\lambda_1, \lambda_2}_{E}) \\
            \hrho^{\lambda_1}_{E^c} + \frac{1}{\lambda_2 n}X_{E^c}^\top \big(y - X_{E}\hbeta^{\lambda_1, \lambda_2}_E - X_{E^c} \hbeta^{\lambda_1, \lambda_2}_{E^c}\big) \in \sign(\hbeta^{\lambda_1, \lambda_2}_{E^c})
        \end{cases}
        \enspace.
    \end{equation}
    These are necessary and sufficient conditions for a vector $\beta$ to be a solution of \Bregman~\eqref{eq:bregman_iteration_linear}.
    Therefore, it is sufficient to check if the vector defined as $\hbeta^{\lambda_1, \lambda_2} = \slsbeta$ satisfies \cref{eq:fermat} for some $\lambda_2$.
    Substituting $\hbeta^{\lambda_1, \lambda_2}_{E} = \slsbeta_{E}$ and $\hbeta^{\lambda_1, \lambda_2}_{E^c} = 0$ we arrive at
        \begin{equation}
        \label{eq:fermat_modif}
        \begin{cases}
            \hrho^{\lambda_1}_{E} + \frac{1}{\lambda_2 n}X_{E}^\top \big(y - X_{E}\slsbeta_{E}\big) \in \sign(\slsbeta_{E})\\
            \hrho^{\lambda_1}_{E^c} + \frac{1}{\lambda_2 n}X_{E^c}^\top \big(y - X_{E}\slsbeta_{E}\big) \in \sign(0)
        \end{cases}
        \enspace.
    \end{equation}
    Since $\slsbeta_{E}$ satisfies the KKT conditions, given in~\cref{eq:KKT}, this yields to
    \begin{equation}
    \label{eq:fermat_modif2}
        \begin{cases}
            \hrho^{\lambda_1}_{E} \odot(\1_{|E|} - \frac{1}{\lambda_2}\hat{\mu}) \in \sign(\slsbeta_{E})\\
            \hrho^{\lambda_1}_{E^c} + \frac{1}{\lambda_2 n}X_{E^c}^\top \big(y - X_{E}\slsbeta_{E}\big) \in \sign(0)
        \end{cases}
        \enspace,
    \end{equation}
    where $\1_{|E|} = {(1, \ldots, 1)^\top}\in\bbR^{|E|}$.
    We notice that the second line in~\cref{eq:fermat_modif2} can be satisfied for $\lambda_2$ large enough, since for each $j \in E^c$ we have $|\hrho^{\lambda_1}_j| < 1$ and $\slsbeta_{E}$ does not depend on the value of $\lambda_2$.
    Denote $\lambda$ the smallest $\lambda_2$ such that the second condition in~\cref{eq:fermat_modif2} is satisfied.
    Now, notice that the first line can be studied element-wise, hence we study two distinct cases, where we take $j\in E$.
    \begin{itemize}
            \item $\hat{\mu}_j = 0 \implies \hrho^{\lambda_1}_j \in \sign\big(\slsbeta_{E}\big)_j$, which holds due to the definition of $\slsbeta_{E}$,
            \item $\hat{\mu}_j > 0 \overset{\eqref{eq:KKT}}{\implies} \big(\slsbeta_{E}\big)_j = 0 \implies \text{ if } \lambda_2 \geq \hat{\mu}_j $ then $ |\hrho^{\lambda_1}_j(1 - \frac{\hat{\mu}_j}{\lambda_2})| \leq 1$ and \cref{eq:fermat_modif2} is satisfied.
    \end{itemize}
    Setting $\lambda_0 = \lambda \vee  \hat{\mu}_{\max}$, where $\hat\mu_{\max} = \max_{j=1}^{|E|}\{\hat \mu_{j}\}$ concludes the proof.
\end{proof}
Combining~\Cref{thm:bregman_racle,th:SLS} we can state the following corollary, which provides an oracle inequality for the \SLSLasso:
\begin{cor}
    \label{cor:slslasso_oracle}
    {
    If the design matrix $X \in \bbR^{n \times p}$ satisfies the restricted eigenvalue condition RE($3$, $s$) and ${\lambda} = 5\sigma\sqrt{{2\log{(p/\delta)}}/{n}}$ for some $\delta \in (0, 1)$, then with probability $1 - \delta$ the following bound holds:
  \[
  \frac{1}{n}\normin{X(\tbeta - \bbeta)}_2^2 + \frac{1}{n}\normin{X(\tbeta - \hbeta)}_2^2
      \leq \frac{\lambda^2 s}{\kappa^2(1, s)} + \frac{9\lambda^2 s}{4\kappa^2(3, s)}\enspace,
    \]
    where $\bbeta$ is the \SLSLasso solution associated with $\hbeta = \hbeta^{\lambda}$.}
\end{cor}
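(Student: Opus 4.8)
The plan is to realize the \SLSLasso estimator $\bbeta$ as a \Bregman refitting and then read the bound directly off~\Cref{thm:bregman_racle}. Fix the first-step Lasso parameter to be $\lambda_1 = \lambda = 5\sigma\sqrt{{2\log{(p/\delta)}}/{n}}$, let $\hbeta = \hbeta^{\lambda}$ denote the corresponding Lasso solution, and let $\bbeta = \slsbeta$ be the \SLSLasso refitting attached to it. By~\Cref{th:SLS} there exists a data-dependent threshold $\lambda_0 > 0$ such that, for every $\lambda_2 \geq \lambda_0$, the \Bregman refitting~\cref{eq:bregman_iteration_linear} run with the pair $(\lambda, \lambda_2)$ coincides with $\slsbeta$. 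The essential point is that $\slsbeta$ itself does not depend on $\lambda_2$, so we retain full freedom to enlarge $\lambda_2$ as needed.

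First I would choose any $\lambda_2 \geq \lambda_0 \vee 2\lambda$ and set $r = \lambda_2/\lambda \geq 2$, which guarantees $\hbeta^{\lambda, \lambda_2} = \slsbeta = \bbeta$. The key inequality to verify is that our fixed $\lambda_1 = \lambda$ still meets the requirement of~\Cref{thm:bregman_racle}. Indeed, the threshold on the Lasso level appearing there equals $\tfrac{1+2r}{r}\,2\sigma\sqrt{{2\log{(p/\delta)}}/{n}} = \big(2 + \tfrac{1}{r}\big)\,2\sigma\sqrt{{2\log{(p/\delta)}}/{n}}$, which is decreasing in $r$ and, for every $r \geq 2$, bounded above by $\tfrac{5}{2}\cdot 2\sigma\sqrt{{2\log{(p/\delta)}}/{n}} = 5\sigma\sqrt{{2\log{(p/\delta)}}/{n}} = \lambda$. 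Hence $\lambda_1 = \lambda$ satisfies the hypothesis of~\Cref{thm:bregman_racle} uniformly over all admissible $r \geq 2$, and in particular for the possibly large $r$ forced by $\lambda_2 \geq \lambda_0$.

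Applying~\Cref{thm:bregman_racle} with this pair $(\lambda_1, \lambda_2)$ then gives, on the same probability-$(1-\delta)$ event,
\[
\frac{r + 2}{r}\,\frac{1}{n}\normin{X(\tbeta - \bbeta)}_2^2 + \frac{1}{n}\normin{X(\tbeta - \hbeta)}_2^2 \leq \frac{\lambda^2 s}{\kappa^2(1, s)} + \frac{9\lambda^2 s}{4\kappa^2(3, s)}\enspace.
\]
Since $\tfrac{r+2}{r} \geq 1$, discarding this factor on the left-hand side yields precisely the claimed inequality; the right-hand side is unchanged because it depends only on $\lambda_1 = \lambda$ and not on $\lambda_2$ or $r$.

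The only delicate step is reconciling the two parametrizations:~\Cref{th:SLS} forces $\lambda_2$ above a data-dependent $\lambda_0$, whereas~\Cref{thm:bregman_racle} couples the admissible Lasso level to $r = \lambda_2/\lambda_1$. I expect the main obstacle to be confirming that~\Cref{thm:bregman_racle} remains valid when the Lasso parameter merely exceeds, rather than equals, the stated threshold $\tfrac{1+2r}{r}\,2\sigma\sqrt{{2\log{(p/\delta)}}/{n}}$; this is the natural form of such oracle inequalities, since the governing concentration event of~\Cref{lem:union_bound} is independent of $\lambda_1$ and $\lambda_2$. This inequality form is exactly what permits keeping $\lambda_1 = \lambda$ fixed while sending $r \to \infty$, so that the refitted object stays the \SLSLasso of $\hbeta^{\lambda}$; the enlarged constant $5$, in place of the limiting value $4$ reached as $r \to \infty$, supplies precisely the slack that makes the threshold condition hold for every $r \geq 2$.
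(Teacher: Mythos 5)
Your proof is correct and follows exactly the route the paper intends: the paper derives \Cref{cor:slslasso_oracle} by ``combining \Cref{thm:bregman_racle,th:SLS}'', and you have supplied precisely the missing details --- choosing $\lambda_2 \geq \lambda_0 \vee 2\lambda$ so that $r \geq 2$ (which is why the constant is $5 = (2+\tfrac{1}{2})\cdot 2$ rather than the limiting $4$), observing that the bound of \Cref{thm:bregman_racle} only requires $\lambda_1$ to dominate the threshold $\tfrac{1+2r}{r}2\sigma\sqrt{2\log(p/\delta)/n}$ since the concentration event of \Cref{lem:union_bound} does not involve $\lambda_1,\lambda_2$, and discarding the factor $\tfrac{r+2}{r}\geq 1$. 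Your uniformity-in-$r$ remark also correctly handles the data-dependence of $\lambda_0$, so no gap remains.
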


%!TEX root = ../bernoulli/lasso_refitting.tex

\begin{figure}[t!]
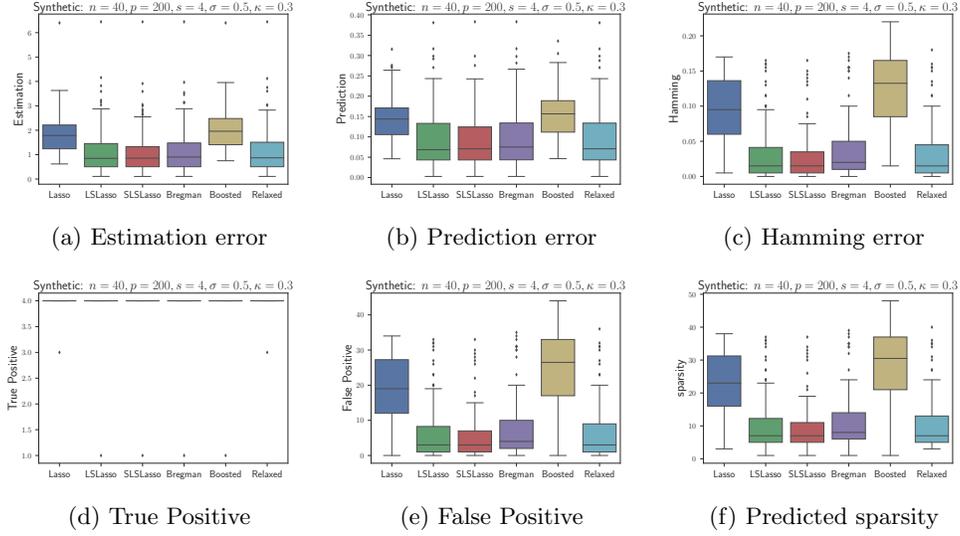

\centering
        \begin{subfigure}[b]{0.32\textwidth}
            \centering
            \includegraphics[width=0.99\textwidth]{{{"code/graphs/n:40p:200sigma:0.5-corr:0.3s:4/Estimation"}}}
            \caption{Estimation error}
        \end{subfigure}
        \begin{subfigure}[b]{0.32\textwidth}
            \centering
            \includegraphics[width=0.99\textwidth]{{{"code/graphs/n:40p:200sigma:0.5-corr:0.3s:4/Prediction"}}}
            \caption{Prediction error}
        \end{subfigure}
        \begin{subfigure}[b]{0.32\textwidth}
            \centering
            \includegraphics[width=0.99\textwidth]{{{"code/graphs/n:40p:200sigma:0.5-corr:0.3s:4/Hamming"}}}
            \caption{Hamming error}
        \end{subfigure}
        \begin{subfigure}[b]{0.32\textwidth}
            \centering
            \includegraphics[width=0.99\textwidth]{{{"code/graphs/n:40p:200sigma:0.5-corr:0.3s:4/True_Positive"}}}
            \caption{True Positive}
        \end{subfigure}
        \begin{subfigure}[b]{0.32\textwidth}
            \centering
            \includegraphics[width=0.99\textwidth]{{{"code/graphs/n:40p:200sigma:0.5-corr:0.3s:4/False_Positive"}}}
            \caption{False Positive}
        \end{subfigure}
        \begin{subfigure}[b]{0.32\textwidth}
            \centering
            \includegraphics[width=0.99\textwidth]{{{"code/graphs/n:40p:200sigma:0.5-corr:0.3s:4/sparsity"}}}
            \caption{Predicted sparsity}
        \end{subfigure}
        \caption{Synthetic dataset: low correlation scenario.}
        \label{plot:synth_low_corr}
\end{figure}

% \begin{equation}
%     \label{eq:MLE_with_ell_1}
%     {\hbeta}^{\lambda_1}\in \argmin_{{\beta} \in \bbR^p} \Big\{\frac{1}{n}\sum\limits_{i=1}^n \Big(-y_i \scalar{{x}_i}{\beta} + b(\scalar{{x}_i}{\beta})\Big) + \lambda_1\norm{{\beta}}_1  \Big\}\enspace.
% \end{equation}
% Similarly, Bregman type refitting (for a fixed $\lambda_2 > 0$) can be performed as
% \begin{equation}
%     \label{eq:refitting_with_ell_1}
%     {\hbeta}^{\lambda_1, \lambda_2}\in \argmin_{{\beta} \in \bbR^p} \Big\{\frac{1}{n}\sum\limits_{i=1}^n \Big(-y_i \scalar{{x}_i}{\beta} + b(\scalar{{x}_i}{\beta})\Big) + \lambda_2\Breg{\ell_1}^{\hrho^{\lambda_1}}(\beta, {\hbeta}^{\lambda_1}) \Big\}\enspace,
% \end{equation}
% where the subgradient is given by ${\hrho}^{\lambda_1} = \frac{1}{\lambda_1 n}X^\top(y - b'(X{\hbeta}^{\lambda_1}))$.
\begin{figure}[t!]
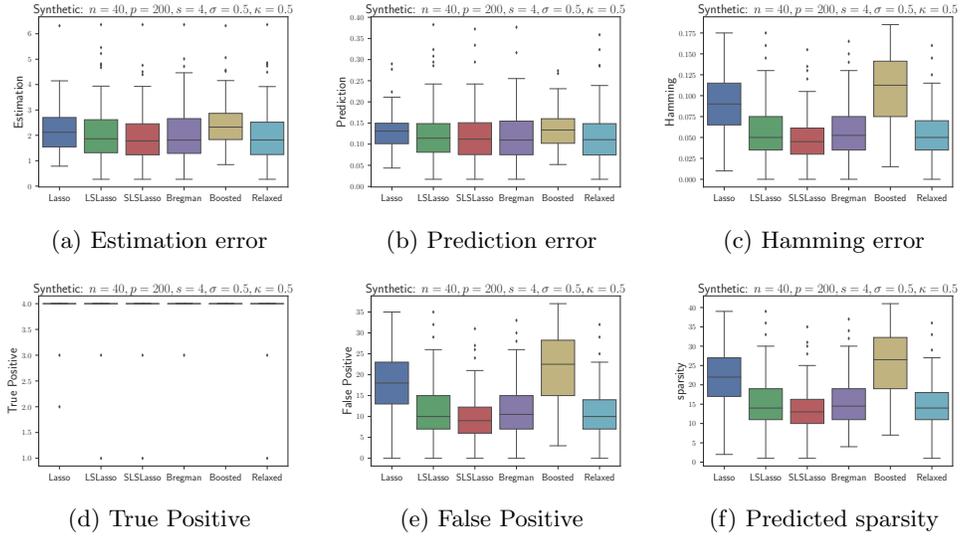

\centering
        \begin{subfigure}[b]{0.32\textwidth}
            \centering
            \includegraphics[width=0.99\textwidth]{{{"code/graphs/n:40p:200sigma:0.5-corr:0.5s:4/Estimation"}}}
            \caption{Estimation error}
        \end{subfigure}
        \begin{subfigure}[b]{0.32\textwidth}
            \centering
            \includegraphics[width=0.99\textwidth]{{{"code/graphs/n:40p:200sigma:0.5-corr:0.5s:4/Prediction"}}}
            \caption{Prediction error}
        \end{subfigure}
        \begin{subfigure}[b]{0.32\textwidth}
            \centering
            \includegraphics[width=0.99\textwidth]{{{"code/graphs/n:40p:200sigma:0.5-corr:0.5s:4/Hamming"}}}
            \caption{Hamming error}
        \end{subfigure}
        \begin{subfigure}[b]{0.32\textwidth}
            \centering
            \includegraphics[width=0.99\textwidth]{{{"code/graphs/n:40p:200sigma:0.5-corr:0.5s:4/True_Positive"}}}
            \caption{True Positive}
        \end{subfigure}
        \begin{subfigure}[b]{0.32\textwidth}
            \centering
            \includegraphics[width=0.99\textwidth]{{{"code/graphs/n:40p:200sigma:0.5-corr:0.5s:4/False_Positive"}}}
            \caption{False Positive}
        \end{subfigure}
        \begin{subfigure}[b]{0.32\textwidth}
            \centering
            \includegraphics[width=0.99\textwidth]{{{"code/graphs/n:40p:200sigma:0.5-corr:0.5s:4/sparsity"}}}
            \caption{Predicted sparsity}
        \end{subfigure}
        \caption{Synthetic dataset: average correlation scenario.}
        \label{plot:synth_average_corr}
\end{figure}

%!TEX root = ../bernoulli/lasso_refitting.tex

%%%%%%%%%%%%%%%%%%%%%%%%%%%%%%%%%%%%%%%%%%%%%%%%%%%%%%%%%%%%%%%%%%%%%%%%%%%%%%%
%%%%%%%%%%%%%%%%%%%%%%%%%%%%%%%%%%%%%%%%%%%%%%%%%%%%%%%%%%%%%%%%%%%%%%%%%%%%%%%
\section{Experiments}
\label{sec:experiments}
%%%%%%%%%%%%%%%%%%%%%%%%%%%%%%%%%%%%%%%%%%%%%%%%%%%%%%%%%%%%%%%%%%%%%%%%%%%%%%%
%%%%%%%%%%%%%%%%%%%%%%%%%%%%%%%%%%%%%%%%%%%%%%%%%%%%%%%%%%%%%%%%%%%%%%%%%%%%%%%
To evaluate each scenario, we consider the following 'oracle' performance measures (in the sense that in practice one can not evaluate them):
\begin{itemize}
    \item Prediction: $\normin{X(\tbeta - \hbeta)}_2^2$;
    \item Estimation: $\normin{\tbeta - \hbeta}_1$;
    \item Predicted sparsity: $|\supp(\hbeta)|$;
    \item True Positive: $|\{j\in[p]: \tbeta_j \neq 0 \text{ and } \hbeta_j \neq 0\}|$;
    \item False Positive: $|\{j\in[p]: \tbeta_j = 0 \text{ and } \hbeta_j \neq 0\}|$;
    \item Hamming: $\frac{1}{p}|\{j\in[p]: \text{False Positive} \text{ or } \text{False Negative} \text{ or } \text{False Sign}\}|$,

    where $\underbrace{\tbeta_j = 0 \text{ and } \hbeta_j \neq 0}_{\text{False Positive}}, \ \underbrace{\tbeta_j \neq 0 \text{ and } \hbeta_j = 0}_{\text{False Negative}},  \ \underbrace{\sign(\tbeta_j) \neq \sign(\hbeta_j)}_{\text{False Sign}}$.
\end{itemize}

\begin{figure}[t!]
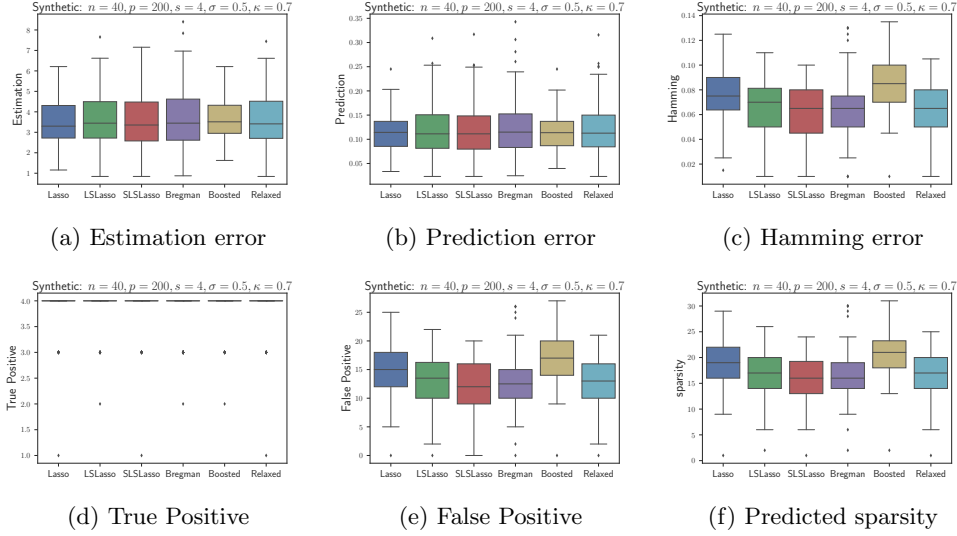

\centering
        \begin{subfigure}[b]{0.32\textwidth}
            \centering
            \includegraphics[width=0.99\textwidth]{{{code/graphs/n:40p:200sigma:0.5-corr:0.7s:4/Estimation}}}
            \caption{Estimation error}
        \end{subfigure}
        \begin{subfigure}[b]{0.32\textwidth}
            \centering
            \includegraphics[width=0.99\textwidth]{{{code/graphs/n:40p:200sigma:0.5-corr:0.7s:4/Prediction}}}
            \caption{Prediction error}
        \end{subfigure}
        \begin{subfigure}[b]{0.32\textwidth}
            \centering
            \includegraphics[width=0.99\textwidth]{{{code/graphs/n:40p:200sigma:0.5-corr:0.7s:4/Hamming}}}
            \caption{Hamming error}
        \end{subfigure}
        \begin{subfigure}[b]{0.32\textwidth}
            \centering
            \includegraphics[width=0.99\textwidth]{{{"code/graphs/n:40p:200sigma:0.5-corr:0.7s:4/True_Positive"}}}
            \caption{True Positive}
        \end{subfigure}
        \begin{subfigure}[b]{0.32\textwidth}
            \centering
            \includegraphics[width=0.99\textwidth]{{{"code/graphs/n:40p:200sigma:0.5-corr:0.7s:4/False_Positive"}}}
            \caption{False Positive}
        \end{subfigure}
        \begin{subfigure}[b]{0.32\textwidth}
            \centering
            \includegraphics[width=0.99\textwidth]{{{code/graphs/n:40p:200sigma:0.5-corr:0.7s:4/sparsity}}}
            \caption{Predicted sparsity}
        \end{subfigure}
        \caption{Synthetic dataset: high correlation scenario.}
        \label{plot:synth_high_corr}
\end{figure}
All except one measures used in our evaluation are standard and are widely considered by various authors.
We additionally study Hamming loss, which allows to capture information about miss predicting the sign of the underlying $\tbeta$.
However, one should keep in mind that the introduced Hamming treats False Positive, False Negative and False Sign mistakes equally.
Furthermore, the estimators that we consider are described in~\Cref{tab:all_estimatros}.

\begin{table}[h!]
\centering
\begin{footnotesize}
\renewcommand{\arraystretch}{2}
\begin{tabular}{l|l}
Lasso~\citep{Tibshirani96}& $\hbeta^{Lasso} \in \displaystyle\argmin_{\beta \in \bbR^p} \frac{1}{2n}\normin{y - X\beta}_2^2 + \lambda_1\normin{\beta}_1$\\
\hline
LSLasso~\citep{Belloni_Chernozhukov13} & $\hbeta^{LSLasso} \in \displaystyle\argmin_{\beta : \supp(\beta) \subset \supp(\hbeta^{Lasso})} \frac{1}{2n}\normin{y - X\beta}_2^2$\\
\hline
SLSLasso~\citep{Brinkmann_Burger_Rasch_Satour16} & $\hbeta^{SLSLasso} \in \displaystyle\argmin_{\beta:\beta \odot \hrho^{\lambda_1} \geq 0} \frac{1}{2n}\normin{y - X\beta}_2^2$\\
\hline
Boosted Lasso & $\hbeta^{Boosted} \in \displaystyle\argmin_{\beta \in \bbR^p} \frac{1}{2n}\normin{y - X\beta}_2^2 + \lambda_2\normin{\beta - \hbeta^{Lasso}}_1$\\
\hline
Bregman Lasso & $\hbeta^{Bregman} \in \displaystyle\argmin_{\beta \in \bbR^p} \frac{1}{2n}\normin{y - X\beta}_2^2 + \lambda_2(\normin{\beta}_1 - \scalar{\hrho^{\lambda_1}}{\beta})$\\
\hline
{Relaxed Lasso} & {$\hbeta^{Relaxed} \in \displaystyle\argmin_{\beta : \supp(\beta) \subset \supp(\hbeta^{Lasso})} \frac{1}{2n}\norm{y - X\beta}_2^2 + \phi\lambda_1{\norm{\beta}_1}$}\\
\end{tabular}
\end{footnotesize}
\caption{Description of considered estimators in the experiments. The subgradient $\hrho^{\lambda_1}$, associated with the Lasso solution, is evaluated according to~\Cref{eq:refitting_subgradient} with $\hbeta^{\lambda_1} = \hbeta^{Lasso}$. {For the Relaxed Lasso, the parameter $
\phi$ belongs to the interval $(0, 1)$.}}
\label{tab:all_estimatros}
\end{table}

To report our results, we present boxplots for each scenario and performance measure over one hundred experiment replicas.
During each simulation run we perform 3 fold cross-validation over a predefined one dimensional grid of 50 points, spread equally on logarithmic scale from $0.01 \cdot \norm{X^\top y/n}_{\infty}$ to $\norm{X^\top y/n}_{\infty}$ for $\lambda_1$, or two dimensional grid of $50 \times 50$ points for $\lambda_1, \lambda_2$ on the same scale.
{For the Relaxed Lasso the parameter $\phi$ is chosen over a uniform grid of $50$ points laying in the interval $(0.001, 0.999)$.}
We finally select the tuning parameters achieving the best 3-fold cross-validation performance in terms of MSE, since in practice one does not have access to the underlying $\tbeta$.
%%%%%%%%%%%%%%%%%%%%%%%%%%%%%%%%%%%%%%%%%%%%%%%%%%%%%%%%%%%%%%%%%%%%%%%%%%%%%%%
%%%%%%%%%%%%%%%%%%%%%%%%%%%%%%%%%%%%%%%%%%%%%%%%%%%%%%%%%%%%%%%%%%%%%%%%%%%%%%%
\subsection{Synthetic data}
\label{subsec:synthetic_data}
%%%%%%%%%%%%%%%%%%%%%%%%%%%%%%%%%%%%%%%%%%%%%%%%%%%%%%%%%%%%%%%%%%%%%%%%%%%%%%%
%%%%%%%%%%%%%%%%%%%%%%%%%%%%%%%%%%%%%%%%%%%%%%%%%%%%%%%%%%%%%%%%%%%%%%%%%%%%%%%

In our synthetic experiments, we generated the design matrix $X \in \bbR^{n \times p}$ as follows
\begin{align}
    X_j = \sqrt{n}\frac{\kappa \zeta + (1-\kappa)\xi_j}{\norm{\kappa \zeta + (1-\kappa)\xi_j}_2}\enspace,
\end{align}
where $\zeta, \xi_1, \ldots, \xi_p \sim \mathcal{N}(0, I_p)$ are independent standard normal vectors.
The level of correlations between the covariates is determined by the parameter $\kappa \in [0, 1]$.
We additionally consider a noise vector $\varepsilon \sim \mathcal{N}(0, I_n)$ and set the underlying vector as
\[
\tbeta = (\underbrace{1,\ldots, 1}_{s}, \underbrace{0, \ldots, 0}_{p - s})^\top \in \bbR^p\enspace.
\]

\begin{figure}[t!]
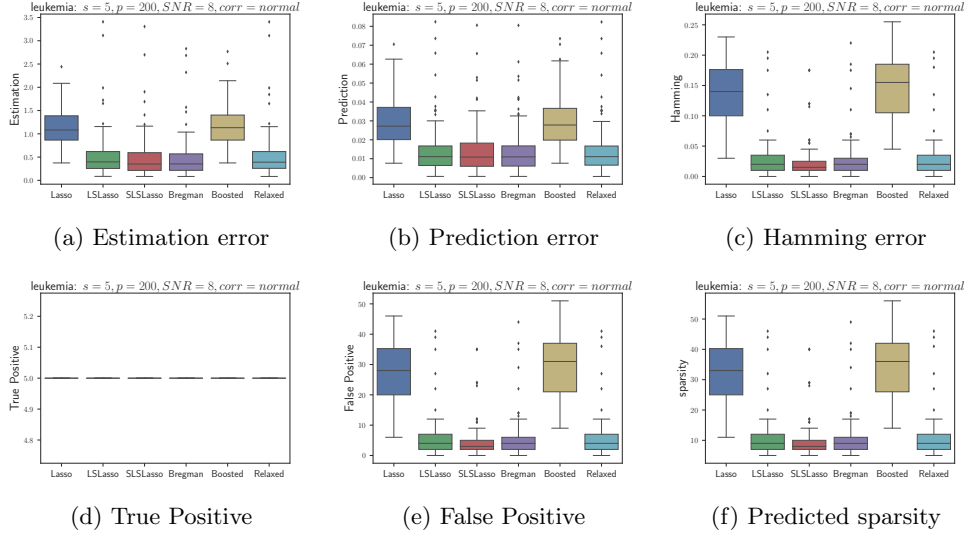

\centering
        \begin{subfigure}[b]{0.32\textwidth}
            \centering
            \includegraphics[width=0.99\textwidth]{{{code/graphs/s:5p:200SNR:8-normal/Estimation}}}
            \caption{Estimation error}
        \end{subfigure}
        \begin{subfigure}[b]{0.32\textwidth}
            \centering
            \includegraphics[width=0.99\textwidth]{{{code/graphs/s:5p:200SNR:8-normal/Prediction}}}
            \caption{Prediction error}
        \end{subfigure}
        \begin{subfigure}[b]{0.32\textwidth}
            \centering
            \includegraphics[width=0.99\textwidth]{{{code/graphs/s:5p:200SNR:8-normal/Hamming}}}
            \caption{Hamming error}
        \end{subfigure}
        \begin{subfigure}[b]{0.32\textwidth}
            \centering
            \includegraphics[width=0.99\textwidth]{{{"code/graphs/s:5p:200SNR:8-normal/True_Positive"}}}
            \caption{True Positive}
        \end{subfigure}
        \begin{subfigure}[b]{0.32\textwidth}
            \centering
            \includegraphics[width=0.99\textwidth]{{{"code/graphs/s:5p:200SNR:8-normal/False_Positive"}}}
            \caption{False Positive}
        \end{subfigure}
        \begin{subfigure}[b]{0.32\textwidth}
            \centering
            \includegraphics[width=0.99\textwidth]{{{code/graphs/s:5p:200SNR:8-normal/sparsity}}}
            \caption{Predicted sparsity}
        \end{subfigure}
        \caption{\emph{Leukemia} dataset with $p=200$, $\SNR=8$, $s=5$, normal correlations scenario.}
        \label{plot:leuk_200_8_5}
\end{figure}
Hence, each scenario in the synthetic data section is described by the following parameters: number of observations: $n$, number of features: $p$, sparsity level of $\tbeta$: $s$, level of correlations: $\kappa$, noise level: $\sigma$.
We fix $n = 40, p = 200, s = 4, \sigma = 0.5$ and use the following values of correlations $\kappa = 0.3; 0.5; 0.7$, representing low, average and high correlation settings respectively.
The results are reported on~\Cref{plot:synth_low_corr},~\Cref{plot:synth_average_corr} and~\Cref{plot:synth_high_corr}.
We first notice that the \Boosted does not give any significant improvement over the Lasso.
Other four refitting strategies can improve the first step Lasso solution.
In case of modest correlations inside the design matrix, the improvement can be considered as significant.
Moreover, the \SLSLasso outperforms the \LSLasso and the \Bregman in average, in addition to the greater interpretability due to the sign preserving properties.
{The Relaxed Lasso performance  is on par with the one of the \LSLasso. However, the former requires an additional tuning parameter to be calibrated.}

%%%%%%%%%%%%%%%%%%%%%%%%%%%%%%%%%%%%%%%%%%%%%%%%%%%%%%%%%%%%%%%%%%%%%%%%%%%%%%%
%%%%%%%%%%%%%%%%%%%%%%%%%%%%%%%%%%%%%%%%%%%%%%%%%%%%%%%%%%%%%%%%%%%%%%%%%%%%%%%
\subsection{Semi-real data}
\label{subsec:semi_real_data}
%%%%%%%%%%%%%%%%%%%%%%%%%%%%%%%%%%%%%%%%%%%%%%%%%%%%%%%%%%%%%%%%%%%%%%%%%%%%%%%
%%%%%%%%%%%%%%%%%%%%%%%%%%%%%%%%%%%%%%%%%%%%%%%%%%%%%%%%%%%%%%%%%%%%%%%%%%%%%%%
\begin{table}[t!]
\centering
\begin{tabular}{lll}
\hline
\multicolumn{3}{l}{Settings}                   \\ \hline
Signal to noise ration (SNR) & $2$    & $8$    \\
Number of covariates ($p$)   & $200$  & $1000$ \\
Underlying sparsity ($s$)    & $5$    & $20$   \\
Correlations settings        & Normal & High   \\ \hline
\end{tabular}
\caption{Description of the parameters for semi-real dataset simulation}
\label{tab:params_description}
\end{table}
For our experimental study, we generate semi-real datasets, following an approach presented in~\citep{Buhlmann_Mandozzi14}.
The data are generated from the model~\Cref{eq:linear_regression_model}, where the design matrix $X$ is obtained from the \emph{leukemia} dataset with $n = 72$.
We consider the following $4$ parameters to describe the settings of our experiments: $p$ - number of covariates, $s$ - sparsity of the vector $\tbeta$, SNR - signal to noise ratio, and the correlation settings - way to generate support of the vector $\tbeta$.
All the plots are averaged over hundred runs of the simulation process with fixed values of parameters given in~\Cref{tab:params_description}.
During each simulation round we choose the first $p$ columns from the \emph{leukemia} dataset.
Additionally we set the signal to noise ratio, defined as $\SNR = \frac{\norm{X\tbeta}_2}{\sqrt{n\sigma^2}}$,
to control the noise level $\sigma^2$ in the model~\Cref{eq:linear_regression_model}.
Moreover the true cardinality of $\tbeta$ is set to $s$ and each non-zero component of $\tbeta$ is set to $1$ or $-1$ with equal probabilities.
Finally, the support of the vector $\tbeta$ is formed following two scenarios: normal correlations and high correlations.
For normal correlations we choose randomly $s$ components out of $p$ and for high correlations scenario, the first component is chosen randomly and the remaining $s - 1$ having the highest Pearson correlation with the first one.
Additional scenarios can be found in Appendix, in the main text we provide only three cases due to the space limitation.
\begin{figure}[t!]
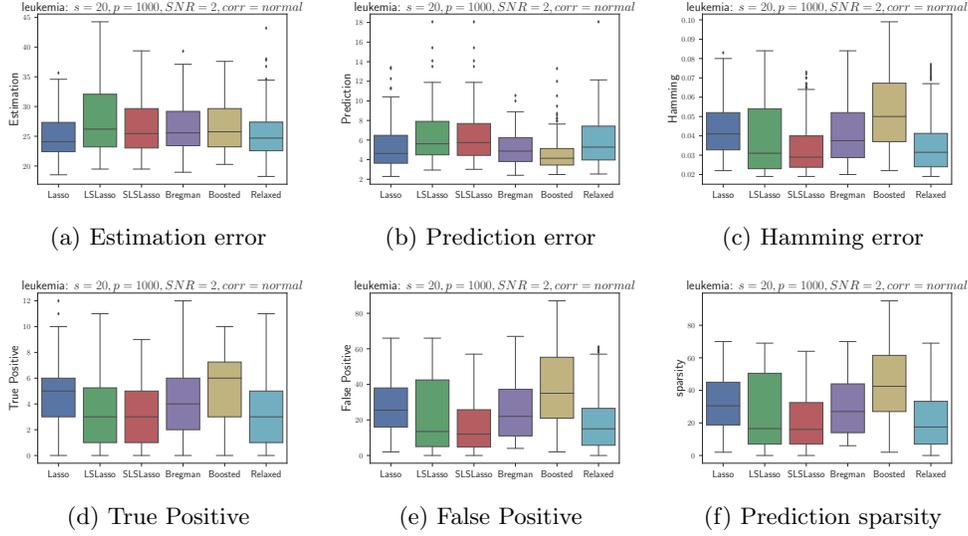

\centering
        \begin{subfigure}[b]{0.32\textwidth}
            \centering
            \includegraphics[width=0.99\textwidth]{{{code/graphs/s:20p:1000SNR:2-normal/Estimation}}}
            \caption{Estimation error}
        \end{subfigure}
        \begin{subfigure}[b]{0.32\textwidth}
            \centering
            \includegraphics[width=0.99\textwidth]{{{code/graphs/s:20p:1000SNR:2-normal/Prediction}}}
            \caption{Prediction error}
        \end{subfigure}
        \begin{subfigure}[b]{0.32\textwidth}
            \centering
            \includegraphics[width=0.99\textwidth]{{{code/graphs/s:20p:1000SNR:2-normal/Hamming}}}
            \caption{Hamming error}
        \end{subfigure}
        \begin{subfigure}[b]{0.32\textwidth}
            \centering
            \includegraphics[width=0.99\textwidth]{{{"code/graphs/s:20p:1000SNR:2-normal/True_Positive"}}}
            \caption{True Positive}
        \end{subfigure}
        \begin{subfigure}[b]{0.32\textwidth}
            \centering
            \includegraphics[width=0.99\textwidth]{{{"code/graphs/s:20p:1000SNR:2-normal/False_Positive"}}}
            \caption{False Positive}
        \end{subfigure}
        \begin{subfigure}[b]{0.32\textwidth}
            \centering
            \includegraphics[width=0.99\textwidth]{{{code/graphs/s:20p:1000SNR:2-normal/sparsity}}}
            \caption{Prediction sparsity}
        \end{subfigure}
        \caption{\emph{Leukemia} dataset with $p=1000$, $\SNR=2$, $s=20$, normal correlations scenario.}
        \label{plot:_leuk_1000_2_20}
\end{figure}

On~\Cref{plot:leuk_200_8_5} we notice that in the low noise ($\SNR = 8$) and very sparse ($s = 5$) scenario the overall performance is similar to the synthetic dataset, discussed above.

Meanwhile, the conclusion is different for~\Cref{plot:_leuk_1000_2_20}, where the noise level ($\SNR = 2$) and the sparsity level ($s = 20$) are high.
In this scenario we observe that simple Lasso outperforms in average all the refitting strategies in terms of estimation error and the TP rate, \Boosted provides with better prediction rate and the \SLSLasso and the {Relaxed Lasso} achieves better results in terms of all the other measures.
We additionally emphasize, that the \LSLasso shows large variance and might fail to improve the estimation in some cases.

Similar conclusions as for previous scenario can be made for~\Cref{plot:leuk_1000_8_20}, where the sparsity level $(s = 20)$ is still high, but the noise level is reduced $(\SNR = 8)$.
\begin{figure}[t!]
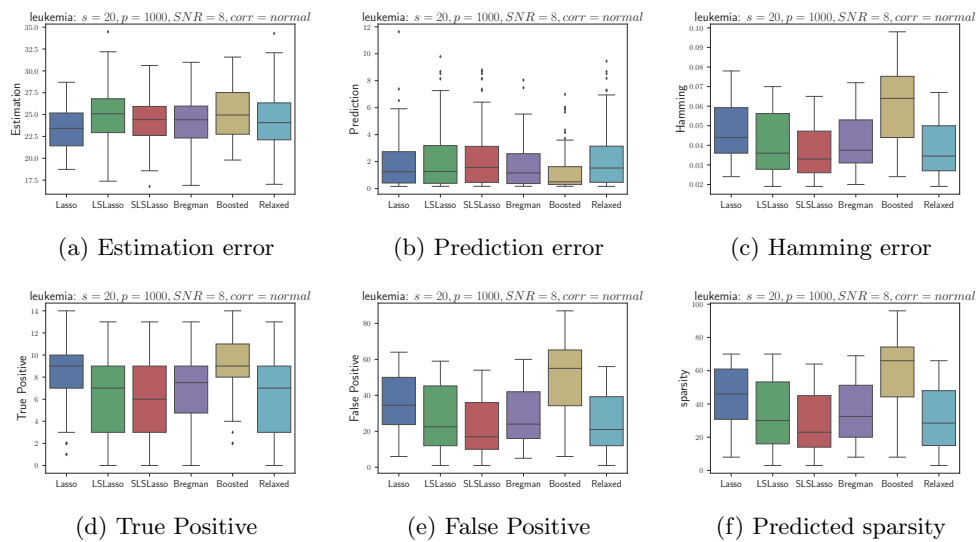

\centering
        \begin{subfigure}[b]{0.32\textwidth}
            \centering
            \includegraphics[width=0.99\textwidth]{{{code/graphs/s:20p:1000SNR:8-normal/Estimation}}}
            \caption{Estimation error}
        \end{subfigure}
        \begin{subfigure}[b]{0.32\textwidth}
            \centering
            \includegraphics[width=0.99\textwidth]{{{code/graphs/s:20p:1000SNR:8-normal/Prediction}}}
            \caption{Prediction error}
        \end{subfigure}
        \begin{subfigure}[b]{0.32\textwidth}
            \centering
            \includegraphics[width=0.99\textwidth]{{{code/graphs/s:20p:1000SNR:8-normal/Hamming}}}
            \caption{Hamming error}
        \end{subfigure}
        \begin{subfigure}[b]{0.32\textwidth}
            \centering
            \includegraphics[width=0.99\textwidth]{{{"code/graphs/s:20p:1000SNR:8-normal/True_Positive"}}}
            \caption{True Positive}
        \end{subfigure}
        \begin{subfigure}[b]{0.32\textwidth}
            \centering
            \includegraphics[width=0.99\textwidth]{{{"code/graphs/s:20p:1000SNR:8-normal/False_Positive"}}}
            \caption{False Positive}
        \end{subfigure}
        \begin{subfigure}[b]{0.32\textwidth}
            \centering
            \includegraphics[width=0.99\textwidth]{{{code/graphs/s:20p:1000SNR:8-normal/sparsity}}}
            \caption{Predicted sparsity}
        \end{subfigure}
        \caption{\emph{Leukemia} dataset with $p=1000$, $\SNR=8$, $s=20$, normal correlations scenario.}
        \label{plot:leuk_1000_8_20}
\end{figure}

We conclude by pointing out that performing or not performing the refitting depends on the measure of interest and on the underlying (unknown in practice) scenario.
\Boosted agrees with our theoretical results, as it only improves the Lasso estimator in terms of prediction, but it fails to improve any other measure except True Positive rate (due to higher output sparsity).
Least-Squares Lasso may show an undesirable performance in some scenarios and bring the problem of interpretability.
\SLSLasso and \Bregman showed consistent and satisfying performance, however Sign-Least-Squares Lasso outperforms both the \Bregman and the Least-Squares Lasso
in average.
We additionally emphasize that our results are valid for cross-validation on MSE.
Other choices are possible (BIC, AIC, AV$_{p}$) and may possibly provide with different overall conclusion.

%%%%%%%%%%%%%%%%%%%%%%%%%%%%%%%%%%%%%%%%%%%%%%%%%%%%%%%%%%%%%%%%%%%%%%%%%%%%%%%
%%%%%%%%%%%%%%%%%%%%%%%%%%%%%%%%%%%%%%%%%%%%%%%%%%%%%%%%%%%%%%%%%%%%%%%%%%%%%%%
\section*{Conclusion}
%%%%%%%%%%%%%%%%%%%%%%%%%%%%%%%%%%%%%%%%%%%%%%%%%%%%%%%%%%%%%%%%%%%%%%%%%%%%%%%
%%%%%%%%%%%%%%%%%%%%%%%%%%%%%%%%%%%%%%%%%%%%%%%%%%%%%%%%%%%%%%%%%%%%%%%%%%%%%%%

In this article we introduced a simple framework to provide additional statistical guarantees on the refitting and sign-consistent refitting of Lasso solutions.
We demonstrated that every sign-consistent refitting strategy satisfies an oracle inequality under the same assumptions as the Lasso bounds.
We theoretically analyzed two refitting strategies: \Boosted and \Bregman, which are easy to implement as they require only Lasso solver.
It appeared that the \Bregman converges to the \SLSLasso, a particular refitting strategy with sign preserving properties.
Experimental results show the advantages of sign-consistent strategies over the simple \LSLasso.
Possible extension of this work is to consider other families of the refitting strategies by either taking into account an additional information provided by the Lasso or replacing the sign-preserving property.
Another interesting road is to use our framework to provide oracle inequalities for estimation error and feature selection.

%%%%%%%%%%%%%%%%%%%%%%%%%%%%%%%%%%%%%%%%%%%%%%%%%%%%%%%%%%%%%%%%%%%%%%%%%%%%%%%
%%%%%%%%%%%%%%%%%%%%%%%%%%%%%%%%%%%%%%%%%%%%%%%%%%%%%%%%%%%%%%%%%%%%%%%%%%%%%%%
\section*{Acknowledgment}
%%%%%%%%%%%%%%%%%%%%%%%%%%%%%%%%%%%%%%%%%%%%%%%%%%%%%%%%%%%%%%%%%%%%%%%%%%%%%%%
%%%%%%%%%%%%%%%%%%%%%%%%%%%%%%%%%%%%%%%%%%%%%%%%%%%%%%%%%%%%%%%%%%%%%%%%%%%%%%%

This work was partially supported by "Laboratoire d'excellence B\'ezout of Universit\'e Paris Est" and by  "Chair Machine Learning for Big Data at T\'el\'ecom ParisTech".

\vskip 0.2in
\bibliographystyle{plainnat}
\bibliography{final_refs}
\newpage
%!TEX root = ../bernoulli/lasso_refitting.tex

\appendix

%%%%%%%%%%%%%%%%%%%%%%%%%%%%%%%%%%%%%%%%%%%%%%%%%%%%%%%%%%%%%%%%%%%%%%%%%%%%%%%
%%%%%%%%%%%%%%%%%%%%%%%%%%%%%%%%%%%%%%%%%%%%%%%%%%%%%%%%%%%%%%%%%%%%%%%%%%%%%%%
\section{Proofs}
\label{sec:appendix_proofs}
%%%%%%%%%%%%%%%%%%%%%%%%%%%%%%%%%%%%%%%%%%%%%%%%%%%%%%%%%%%%%%%%%%%%%%%%%%%%%%%
%%%%%%%%%%%%%%%%%%%%%%%%%%%%%%%%%%%%%%%%%%%%%%%%%%%%%%%%%%%%%%%%%%%%%%%%%%%%%%%

\begin{proof}[Proof of~\Cref{thm:oracle_lasso}]
    We start from the KKT conditions for Lasso~\Cref{lem:kkt_lasso},
    noticing that $\scalar{\hrho}{\hbeta} = \normin{\hbeta}_1$ and for every $\beta \in \bbR^p$ we have $\scalar{\hrho}{\beta} \leq \normin{\beta}_1$ since $\normin{\hrho}_{\infty}\leq 1$, we can write
    \begin{align*}
        \frac{1}{n}(\beta - \hbeta)^\top X^\top (y - X\hbeta) = \scalar{\beta - \hbeta}{\hrho} & \leq {\lambda}(\normin{\beta}_1 - \normin{\hbeta}_1)\enspace,
    \end{align*}
    hence, we have $\frac{1}{n}(\beta - \hbeta)^\top X^\top X(\tbeta - \hbeta) \leq {\lambda}(\normin{\beta}_1 - \normin{\hbeta}_1) + \frac{1}{n} X^\top \varepsilon (\hbeta - \beta)$.

    Using $2 \scalar{x}{y}  = \norm{x}_2^2 + \norm{y}_2^2 -\norm{x-y}_2^2$ (for all $x,\, y \in \bbR^p$),
    we derive
    \begin{align*}
        % \label{eq:inequalitis_for_refitting}
            \tfrac{1}{2n}\normin{X(\beta - \hbeta)}_2^2 + \tfrac{1}{2n}\normin{X(\beta^* - \hbeta)}_2^2 -\tfrac{1}{2n}\normin{X(\beta - \beta^*)}_2^2 \leq &{\lambda}(\normin{\beta}_1 - \normin{\hbeta}_1) + \tfrac{1}{n} X^\top \varepsilon (\hbeta - \beta)\enspace.
    \end{align*}
    From~\Cref{lem:union_bound}, with probability at least $1-\delta$, $\norm{X^\top \varepsilon / n}_{\infty} \leq {\lambda} / 2$. This yields
    \begin{align}
        \label{Basic_ineq}
        \tfrac{1}{n}\normin{X(\beta - \hbeta)}_2^2 + \tfrac{1}{n}\normin{X(\hbeta - \beta^*)}_2^2 \leq \tfrac{1}{n}\normin{X(\beta - \beta^*)}_2^2 + {\lambda}(\normin{\hbeta - \beta}_1 + 2\normin{\beta}_1 - 2\normin{\hbeta}_1)\enspace,
    \end{align}
    now let us set $\hat\Delta = \hbeta - \beta$ and let $J \in [p]$ be any set such that $|J| \leq s$. Using the triangle inequality and the decomposability of the $\ell_1$-norm, we can write
    \begin{align*}
        \normin{\hbeta - \beta}_1 &+ 2\normin{\beta}_1 - 2\normin{\hbeta}_1 \leq 3\normin{\hat\Delta_J}_1 - \normin{\hat\Delta_{J^c}}_1 + 4\normin{\beta_{J^c}}_1\enspace.
    \end{align*}
    We consider two cases:
    \begin{itemize}
        \item if $3\normin{\hat\Delta_J}_1 - \normin{\hat\Delta_{J^c}} \leq 0$, then~\cref{Basic_ineq} implies
        \begin{equation*}
            \frac{1}{n}\normin{X(\beta - \hbeta)}_2^2 + \frac{1}{n}\normin{X(\hbeta - \beta^*)}_2^2 \leq \frac{1}{n}\normin{X(\beta - \beta^*)}_2^2 + 4\lambda\normin{\beta_{J^c}}_1\enspace,
        \end{equation*}
        \item if $3\normin{\Delta_J}_1 - \normin{\hat\Delta_{J^c}} > 0$, then RE($3$, $s$) implies
         $\normin{\hat\Delta_J}_2^2 \leq \frac{\normin{X\hat\Delta}_2^2}{n\kappa^2(3, s)},$
        and hence in view of~\cref{Basic_ineq} we get
        \begin{align*}
            \frac{1}{n}\normin{X(\hbeta - \beta^*)}_2^2 &\leq \frac{1}{n}\normin{X(\beta - \beta^*)}_2^2 + 4\lambda\normin{\beta_{J^c}}_1 + 3\lambda\normin{\hat\Delta_J}_1 - \frac{1}{n}\normin{X\hat\Delta}_2^2 \\
            &\leq \frac{1}{n}\normin{X(\beta - \beta^*)}_2^2 + 4\lambda\normin{\beta_{J^c}}_1 + 3{\lambda}\sqrt{s}\normin{\hat\Delta_J}_2 - \frac{1}{n}\normin{X\hat\Delta}_2^2\\
            &\leq \frac{1}{n}\normin{X(\beta - \beta^*)}_2^2 + 4\lambda\normin{\beta_{J^c}}_1 + \frac{3{\lambda}\sqrt{s}}{\kappa(3, s)}\frac{1}{\sqrt{n}}\normin{X\hat\Delta}_2 - \frac{1}{n}\normin{X\hat\Delta}_2^2 \enspace.
        \end{align*}
        If we set $x = \normin{X\hat\Delta}_2/\sqrt{n}$, the last two terms are
        \begin{equation*}
            \sqrt{s}\frac{3{\lambda}}{\kappa(3, s)}x - x^2 = \Big(\sqrt{s}\frac{3{\lambda}}{2\kappa(3, s)}\Big)^2 - \Big(\sqrt{s}\frac{3{\lambda}}{2\kappa(3, s)} - x\Big)^2 \leq \frac{9{\lambda}s}{4\kappa^2(3, s)}\enspace.
        \end{equation*}
    \end{itemize}
    and we get the desired result
    \begin{align*}
        \frac{1}{n}\normin{X(\beta^* - \hbeta)}_2^2 \leq \inf_{\beta \in \bbR^p} \min_{|J| \leq s}\Big\{\frac{1}{n}\norm{X(\beta - \beta^*)}_2^2 + \frac{9\lambda^2 s}{4 \kappa^2(3, s)} + 4{\lambda}\normin{\beta_{J^c}}_1 \Big\}\enspace.
    \end{align*}
\end{proof}

\begin{proof}[Proof of~\Cref{thm:oracle_refitting_lasso}]
    Substituting $y = X\beta^* + \varepsilon$ into~\Cref{def:refititng_vector} to get
    \begin{align*}
        \frac{1}{2n}\norm{X(\bbeta - \beta^*)}_2^2 - \frac{1}{2n}\norm{X(\beta^* - \hbeta)}_2^2 &\leq \frac{1}{n} X^\top \varepsilon(\bbeta - \hbeta)\enspace.
    \end{align*}
    Hence, we can conclude using H\"older's inequality on the event  $\{\normin{ X^\top \varepsilon/n}_{\infty} \leq \lambda/2\}$.
\end{proof}

\begin{proof}[Proof of~\Cref{thm:oracle_sign_refitting_lasso}]
    Let $\bbeta$ be a sign-consistent refitting of $\hat\beta = \hbeta(\lambda)$ - Lasso solution with the tuning parameter $\lambda$, we first define
    \begin{align}\label{eq:def_deltas}
        \Delta = \bbeta - \hat\beta,\quad
        \bar\Delta = \tbeta - \bbeta,\quad
        \hat\Delta = \tbeta - \hbeta\enspace.
    \end{align}
    We start with the KKT conditions for Lasso (\Cref{lem:kkt_lasso}).
    Since the refitting is sign-consistent we have $\scalar{\bbeta}{\hrho} = \normin{\bbeta}_1$, therefore one can write
    \begin{align}
        \frac{1}{n}(\tbeta - \hbeta)^\top X^\top(y - X\hbeta) &= \lambda\scalar{\tbeta - \hbeta}{\hrho} \leq \lambda(\normin{\tbeta}_1 - \normin{\hbeta}_1)\enspace,\label{eq:subgrad_l1_lasso}\\
        \frac{1}{n}(\tbeta - \bbeta)^\top X^\top(y - X\hbeta) &= \lambda\scalar{\tbeta - \bbeta}{\hrho} \leq \lambda(\normin{\tbeta}_1 - \normin{\bbeta}_1)\enspace.\label{eq:subgrad_l1_sign_refitting}
    \end{align}
    Substituting the model in~\Cref{eq:linear_regression_model}, we get
    \begin{align*}
        \frac{1}{n}(\tbeta - \hbeta)^\top X^\top X(\tbeta - \hbeta) &\leq \lambda(\normin{\tbeta}_1 - \normin{\hbeta}_1) + \frac{1}{n} X^\top \varepsilon(\hbeta - \tbeta)\enspace,\\
        \frac{1}{n}(\tbeta - \bbeta)^\top X^\top X(\tbeta - \hbeta) &\leq \lambda(\normin{\tbeta}_1 - \normin{\bbeta}_1)+ \frac{1}{n} X^\top \varepsilon(\bbeta - \tbeta)\enspace.
    \end{align*}
    Therefore, we have three ingredients to derive our final result: the first one, given in~\cref{eq:subgrad_l1_lasso}, relies on the subgradient property of the $\ell_1$-norm applied to the Lasso solution; the second one, given in~\cref{eq:subgrad_l1_sign_refitting}, relies on the same property applied to the sign-consistent refitting; and the third one
    is coming from the definition of a refitting strategy~\Cref{def:refititng_vector}.
    \begin{align*}
        \frac{1}{n}\norm{X\hat\Delta}_2^2&\leq {\lambda}(\normin{\tbeta}_1 - \normin{\hbeta}_1) + \frac{1}{n} X^\top \varepsilon (\hbeta - \tbeta)\enspace,\\
        \frac{1}{2n}\norm{X\hat\Delta}_2^2 + \frac{1}{2n}\norm{X\bar\Delta}_2^2  -\frac{1}{2n}\norm{X\Delta}_2^2 &\leq {\lambda}(\normin{\tbeta}_1 - \normin{\bbeta}_1) + \frac{1}{n} X^\top \varepsilon (\bbeta - \tbeta)\enspace,\\
        \frac{1}{2n}\norm{X\bar\Delta}_2^2 - \frac{1}{2n}\norm{X\hat\Delta}_2^2 &\leq \frac{1}{n} X^\top \varepsilon(\bbeta - \hbeta)\enspace.
    \end{align*}
    Multiplying the second inequality by $1/2$ and summing the three equations we get
    \begin{align}
    \label{eq:mainRefitt}
        \frac{3}{4n}\norm{X\bar\Delta}_2^2 + \frac{3}{4n}\norm{X\hat\Delta}_2^2 \leq &\frac{1}{4n}\norm{X\Delta}_2^2 + \lambda\Big\{\frac{3}{2}\normin{\tbeta}_1 - \frac{1}{2}\normin{\bbeta}_1 - \normin{\hbeta}_1\Big\}\nonumber\\
        &+ \frac{3}{2n} X^\top \varepsilon(\bbeta - \tbeta)\enspace.
    \end{align}
    Now observe that
    \begin{align*}
    \normin{X\Delta}_2^2 - \normin{X\hat\Delta}_2^2 & = \normin{X\bar\Delta}_2^2 - 2\bar\Delta^\top X^\top X\hat\Delta \\  &  \leq  \normin{X\bar\Delta}_2^2 + 2\normin{X\bar\Delta}_2\normin{X\hat\Delta}_2\\ & \leq \normin{X\bar\Delta}_2^2 + \normin{X\bar\Delta}_2^2 + \normin{X\hat\Delta}_2^2\enspace,
    \end{align*}
%   hence one can write
%   \begin{align*}
%       \frac{3}{4n}\normin{X\bar\Delta}_2^2 + \frac{1}{2n}\normin{X\hat\Delta}_2^2 &\leq \frac{1}{4n}\big(\normin{X\Delta}_2^2 - \normin{X\hat\Delta}_2^2\big) + \lambda\Big\{\frac{3}{2}\normin{\tbeta}_1 - \frac{1}{2}\normin{\bbeta}_1 - \normin{\hbeta}_1\Big\} + \frac{3}{2n} X^\top \varepsilon(\bbeta - \tbeta)\\
%       &=\frac{1}{4n}\big(\normin{X\bar\Delta}_2^2 - 2\bar\Delta^\top X^\top X\hat\Delta) + \lambda\Big\{\frac{3}{2}\normin{\tbeta}_1 - \frac{1}{2}\normin{\bbeta}_1 - \normin{\hbeta}_1\Big\} + \frac{3}{2n} X^\top \varepsilon(\bbeta - \tbeta)\\
%       &\leq \frac{1}{4n}\big(\normin{X\bar\Delta}_2^2 + 2\normin{X\bar\Delta}_2\normin{X\hat\Delta}_2) + \lambda\Big\{\frac{3}{2}\normin{\tbeta}_1 - \frac{1}{2}\normin{\bbeta}_1 - \normin{\hbeta}_1\Big\} + \frac{3}{2n} X^\top \varepsilon(\bbeta - \tbeta)\\
%       &\leq\frac{1}{4n}\big(\normin{X\bar\Delta}_2^2 + \normin{X\bar\Delta}_2^2 + \normin{X\hat\Delta}_2^2) + \lambda\Big\{\frac{3}{2}\normin{\tbeta}_1 - \frac{1}{2}\normin{\bbeta}_1 - \normin{\hbeta}_1\Big\} + \frac{3}{2n} X^\top \varepsilon(\bbeta - \tbeta)\enspace,
%   \end{align*}
    where we have again used $2ab \leq a^2 + b^2$ in the last inequality. Then, subtracting $\frac{1}{4n}\normin{X\hat\Delta}_2^2$ from the both sides in~\Cref{eq:mainRefitt} and using previous inequality, we get
    \begin{align*}
        \frac{1}{n}\normin{X\bar\Delta}_2^2 + \frac{1}{n}\normin{X\hat\Delta}_2^2
        \leq&\lambda\Big\{6\normin{\tbeta}_1 - 2\normin{\bbeta}_1 - 4\normin{\hbeta}_1\Big\} + \frac{6}{n} X^\top \varepsilon(\bbeta - \tbeta)\\
        \leq& \lambda\Big\{6\normin{\tbeta_S}_1 - 2\normin{\tbeta_S}_1 + 2\normin{\bar\Delta_S}_1 - 2\normin{\bar\Delta_{S^c}}_1 \\
        &- 4\normin{\tbeta_S}_1 + 4\normin{\hat\Delta_S}_1 - 4\normin{\hat\Delta_{S^c}}_1\Big\} + \frac{6}{n} X^\top \varepsilon(\bbeta - \tbeta)\\
        =& \lambda\Big\{2\normin{\bar\Delta_S}_1 - 2\normin{\bar\Delta_{S^c}}_1\Big\} + \lambda\Big\{4\normin{\hat\Delta_S}_1 - 4\normin{\hat\Delta_{S^c}}_1\Big\} \\
        &+ \frac{6}{n} X^\top \varepsilon(\bbeta - \tbeta)\enspace,
    \end{align*}
    where in the last inequality we used the fact that $\normin{\beta}_1 \geq \normin{\tbeta_S}_1 - \normin{\tbeta_S - \beta_S}_1 + \normin{\beta_{S^c}}$ for any vector $\beta$ and $\tbeta$ with $\supp(\tbeta) = S$. Now let us restrict our attention on the event where $\normin{ X^\top \varepsilon/n}_{\infty} \leq \lambda/6$, hence we can write
    \begin{align*}
        \frac{1}{n}\normin{X\bar\Delta}_2^2 + \frac{1}{n}\normin{X\hat\Delta}_2^2
        &\leq \lambda\{3\normin{\bar\Delta_S}_1 - \normin{\bar\Delta_{S^c}}_1\} + \lambda\{4\normin{\hat\Delta_S}_1 - 4\normin{\hat\Delta_{S^c}}_1\}\enspace,
    \end{align*}
    notice that both $4\normin{\hat\Delta_S}_1 - 4\normin{\hat\Delta_{S^c}}_1$ and $3\normin{\bar\Delta_S}_1 - \normin{\bar\Delta_{S^c}}_1$ can not be negative simultaneously, if one of them is negative we can simply erase it. W.l.o.g. we can assume that both terms are positive, hence we have
    \begin{align*}
        3\normin{\bar\Delta_S}_1 &\geq \normin{\bar\Delta_{S^c}}_1\\
        \normin{\hat\Delta_S}_1 &\geq \normin{\hat\Delta_{S^c}}_1\enspace,
    \end{align*}
    which allows us to write
    \begin{align*}
        \frac{1}{n}\normin{X\bar\Delta}_2^2 + \frac{1}{n}\normin{X\hat\Delta}_2^2
        &\leq \lambda\{3\normin{\bar\Delta_S}_1 - \normin{\bar\Delta_{S^c}}_1\} + \lambda\{4\normin{\hat\Delta_S}_1 - 4\normin{\hat\Delta_{S^c}}_1\}\\
        &\leq 3\lambda\normin{\bar\Delta_S}_1 + 4\lambda\normin{\hat\Delta_S}_1
        \leq 3\lambda\sqrt{s}\normin{\bar\Delta_S}_2 + 4\lambda\sqrt{s}\normin{\hat\Delta_S}_2\\
        &\leq \frac{3\lambda\sqrt{s}}{\kappa(3, s)\sqrt{n}}\normin{X\bar\Delta}_2 + \frac{4\lambda\sqrt{s}}{\kappa(1, s)\sqrt{n}}\normin{X\hat\Delta}_2\\
        &\leq \frac{9\lambda^2s}{2\kappa^2(3, s)} + \frac{1}{2n}\normin{X\bar\Delta}_2^2 + \frac{16\lambda^2 s}{2\kappa^2(1, s)} + \frac{1}{2n}\normin{X\hat\Delta}_2^2\enspace,
    \end{align*}
    again in the last inequality we used $2ab \leq a^2 + b^2$. Hence, we can write
    \begin{align}
        \frac{1}{2n}\normin{X(\tbeta - \bbeta)}_2^2 + \frac{1}{2n}\normin{X(\tbeta - \hbeta)}_2^2 \leq \lambda^2s\Big(\frac{9}{2\kappa^2(3, s)} + \frac{8}{\kappa^2(1, s)}\Big)\enspace.
    \end{align}
\end{proof}

% \begin{proof}[Proof of~\Cref{thm:noiseless_lasso}]
%     From the first-order optimality condition we have
%     \begin{align*}
%         \frac{1}{n}\normin{X(\tbeta - \hbeta)}_2^2 &\leq {\lambda}(\normin{\tbeta}_1 - \normin{\hbeta}_1)\\
%         &\leq \lambda(\normin{\tbeta - \hbeta_S}_1 - \normin{\hbeta_{S^c}}_1)\enspace.
%     \end{align*}
%     Therefore $\normin{\tbeta - \hbeta_S}_1 \geq \normin{\hbeta_{S^c}}_1$ and we can use the RE($1$, $s$) condition to write:
%     \begin{align*}
%         \frac{1}{n}\normin{X(\tbeta - \hbeta)}_2^2 &\leq \lambda(\normin{\tbeta - \hbeta_S}_1 - \normin{\hbeta_{S^c}}_1)\\
%         &\leq \lambda\sqrt{s}\normin{\tbeta - \hbeta_S}_2 \\
%         &\leq \frac{\lambda\sqrt{s}}{\sqrt{n}\kappa(1, s)}\normin{X(\tbeta - \hbeta)}_2\\
%         &\leq \lambda^2s\frac{1}{2\kappa^2(1, s)} + \frac{1}{2n}\normin{X(\tbeta - \hbeta)}_2^2\enspace.
%     \end{align*}
% \end{proof}
\begin{proof}[Proof of~\Cref{prop:GoodProp}]
    For the second part we notice that $\hbeta^{\lambda_1} = 0$ (since $\lambda_1 \geq \lambda_{1, \max}$) and the statement follows from~\Cref{eq:lasso_boosting} with $\hbeta^{\lambda_1} = 0$.
    For the first part we can compute the critical value for~\Cref{eq:lasso_boosting} (it is a Lasso problem), which is given by $\lambda_{2, \max} = \normin{X^\top \bar y / n}_{\infty}$, where $\bar y = y - X\hbeta^{\lambda_1}$.
    Notice that thanks to~\Cref{lem:kkt_lasso} we can write
    \[
        \lambda_{2, \max} = \frac{1}{n}\big\|X^\top \bar y\big\|_{\infty} = \frac{1}{n}\norm{X^\top \big(y - X\hbeta^{\lambda_1}\big)}_{\infty} = \lambda_1\norm{\hrho^{\lambda_1}}_{\infty}, \,\,\text{for some } \hrho^{\lambda_1} \in \sign(\hbeta^{\lambda_1})\enspace,
    \]
    since $\lambda_1 < \lambda_{1, \max}$, hence $\hbeta^{\lambda_1} \neq 0$ and $\normin{\hrho^{\lambda_1}}_{\infty} = 1$, which concludes the proof.
    % Since the KKT conditions for convex problem defined in~\Cref{eq:lasso_boosting} are necessary and sufficient conditions for $\Delta$ to be a solution of~\Cref{eq:lasso_boosting}.
    % Hence, we show that $\hat\Delta = 0$ satisfies them.
    % Indeed,
    % \begin{align*}
    %    \forall j \in [p], \quad |X_j^\top (y - X\hbeta^{\lambda_1}))| \leq \lambda_2\enspace,
    % \end{align*}
    % which clearly holds due to the KKT conditions for the original Lasso problem (\Cref{lem:kkt_lasso}), whenever $\lambda_2 \geq \lambda_1$:
    % \begin{align*}
    %    \forall j \in [p] \quad |X_j^\top (y - X\hbeta^{\lambda_1}))| \leq \lambda_1 \leq \lambda_2\enspace.
    % \end{align*}
\end{proof}

\begin{proof}[Proof of~\Cref{lem:boosting_lemma}]
By optimality of $\hbeta^{\lambda_1, \lambda_2}$ in~\cref{eq:lasso_boosting_strange_form}
    \begin{align*}
        \frac{1}{2n}\norm{y - X\hbeta^{\lambda_1, \lambda_2}}_2^2 + \lambda_2\normin{\hbeta^{\lambda_1, \lambda_2} - \hbeta^{\lambda_1}}_1 &\leq\frac{1}{2n}\norm{y - X\hbeta^{\lambda_1}}_2^2\enspace,
    \end{align*}
    hence on the event $\normin{ X^\top \varepsilon/n}_{\infty} \leq {\lambda_1}/{2}$ and using H\"older's inequality, we can write
    \begin{align*}
        \tfrac{1}{2n}\norm{X(\tbeta - \hbeta^{\lambda_1, \lambda_2})}_2^2 + \lambda_2\normin{\hbeta^{\lambda_1, \lambda_2} - \hbeta^{\lambda_1}}_1 &\leq \tfrac{1}{2n}\norm{X(\tbeta - \hbeta^{\lambda_1})}_2^2 + \tfrac{1}{n} X^\top \varepsilon(\hbeta^{\lambda_1, \lambda_2} - \hbeta^{\lambda_1})\\
        &\leq \tfrac{1}{2n}\norm{X(\tbeta - \hbeta^{\lambda_1})}_2^2  + \tfrac{\lambda_1}{2}\normin{\hbeta^{\lambda_1, \lambda_2} - \hbeta^{\lambda_1}}_1\enspace.
    \end{align*}
\end{proof}

\begin{proof}[Proof of~\Cref{cor:boosted_oracle}]
Combining~\Cref{lem:boosting_lemma} and~\Cref{thm:oracle_lasso} we can state the following corollary
\end{proof}

% \jo{$k$ is what is $r$ in theorem, and $\lambda$ should be $\lambda_1$ here?}
\begin{proof}[Proof of~\Cref{thm:bregman_racle}]
    Let $r = \lambda_2 / \lambda_1$ and $\lambda = \lambda_1$, from the optimality of the Lasso and the Bregman Lasso we have
    \begin{align*}
      \frac{1}{n}X^\top X(\tbeta - \hbeta) + \frac{1}{n} X^\top \varepsilon &= \lambda \hrho\enspace,\\
      \frac{1}{n}X^\top X(\tbeta - \bbeta) + \frac{1}{n} X^\top \varepsilon &= r\lambda(\brho - \hrho)\enspace,
    \end{align*}
    where $\brho$ and $\hrho$ are subgradients of $\bbeta$ (Bregman Lasso) and $\hbeta$ (Lasso) respectively.
    Hence, we can write as
    \begin{align}
      \label{eq:optimility_;asso}
      \frac{1}{n}X^\top X(\tbeta - \hbeta) + \frac{1}{n} X^\top \varepsilon &= \lambda \hrho\enspace,\\
      \label{eq:optimality_bregman}
      \frac{1}{r n}X^\top X(\tbeta - \bbeta) + \frac{1}{n}X^\top X(\tbeta - \hbeta) + \frac{1+r}{r}\frac{1}{n} X^\top \varepsilon&= \lambda\brho\enspace.
    \end{align}
    Multiplying the first equality by $\bbeta - \hbeta$, the second by $\hbeta - \bbeta$, and using the notation $\hat{\Sigma} = X^\top X /n$ we can write:
    \begin{align*}
      &(\bbeta - \hbeta)^\top \hat{\Sigma}(\tbeta - \hbeta) + \frac{1}{n}(\bbeta - \hbeta)^\top X^\top \varepsilon &\leq \lambda (\normin{\bbeta}_1 - \normin{\hbeta}_1)\enspace,\\
      \frac{1}{r}(\hbeta - \bbeta)^\top \hat{\Sigma}(\tbeta - \bbeta)
      + &(\hbeta - \bbeta)^\top \hat{\Sigma}(\tbeta - \hbeta) + \frac{1+r}{n r} (\hbeta - \bbeta)^\top X^\top \varepsilon&\leq \lambda(\normin{\hbeta}_1 - \normin{\bbeta}_1)\enspace.
    \end{align*}
    %  \begin{align*}
    %   \frac{1}{n}(\bbeta - \hbeta)^\top X^\top X(\tbeta - \hbeta) + &\frac{1}{n}(\bbeta - \hbeta)^\top X^\top \varepsilon \leq \lambda (\normin{\bbeta}_1 - \normin{\hbeta}_1)\enspace,\\
    %   \frac{1}{rn}(\hbeta - \bbeta)^\top X^\top X(\tbeta - \bbeta)
    %   + &\frac{1}{n}(\hbeta - \bbeta)^\top X^\top X(\tbeta - \hbeta) \\
    %    +&\left(\frac{1}{n} + \frac{1}{n r}\right) (\hbeta - \bbeta)^\top X^\top \varepsilon\leq \lambda(\normin{\hbeta}_1 - \normin{\bbeta}_1)\enspace.
    % \end{align*}
    % \jo{if needed usins $\hat\Sigma=X^\top X / n$ could compact all the proofs}
    Summing up the previous inequalities yields:
    \begin{align*}
      \frac{1}{r}(\hbeta - \bbeta)^\top \hat{\Sigma}(\tbeta - \bbeta) + \frac{1}{rn} (\hbeta - \bbeta)^\top X^\top \varepsilon \leq 0\enspace.
    \end{align*}
    We recall the following notation:
    $\Delta = \bbeta - \hat\beta,\quad
     \bar\Delta = \tbeta - \bbeta,\quad
     \hat\Delta = \tbeta - \hbeta$.
    Hence, the previous inequality can be simplified as
    \begin{align*}
        \frac{1}{2n}\normin{X\bar\Delta}_2^2 + \frac{1}{2n}\normin{X\Delta}_2^2 - \frac{1}{2n}\normin{X\hat\Delta}_2^2 \leq \frac{1}{n} (\bbeta - \hbeta)^\top X^\top \varepsilon\enspace.
    \end{align*}
    Additionally notice, that the lasso itself admits the following bound:
    \begin{align*}
      \frac{1}{n}\normin{X\hat{\Delta}}_2^2 \leq \lambda(\normin{\tbeta}_1 - \normin{\hbeta}_1) + \frac{1}{n} (\hbeta - \tbeta)^\top X^\top \varepsilon\enspace,
    \end{align*}
    which, combined with the previous inequality gives:
    \begin{align}
        \label{eq:bregman_proof_lasso_oracle_bregman_mix}
        \frac{1}{2n}\normin{X\bar\Delta}_2^2 + \frac{1}{2n}\normin{X\Delta}_2^2 + \frac{1}{2n}\normin{X\hat\Delta}_2^2 \leq \lambda(\normin{\tbeta}_1 - \normin{\hbeta}_1) + \frac{1}{n} (\bbeta - \tbeta)^\top X^\top \varepsilon \enspace.
    \end{align}
    Multiplying~\cref{eq:optimality_bregman} by $(\tbeta - \bbeta)$ from both sides we arrive at:
    \begin{align*}
      (2+r)\frac{1}{2n}\normin{X\bar\Delta}_2^2 + r\frac{1}{2n}\norm{X\hat\Delta}_2^2  -r\frac{1}{2n}\norm{X\Delta}_2^2 \leq &r\lambda(\normin{\tbeta}_1 - \normin{\bbeta}_1)\\
      &+ (1+r)\frac{1}{n} X^\top \varepsilon(\bbeta - \tbeta)\enspace.
    \end{align*}
    Summing the previous inequality with~\cref{eq:bregman_proof_lasso_oracle_bregman_mix} multiplied by $r$ we arrive at
    \begin{align*}
      \frac{r + 1}{r}\frac{1}{n}\normin{X\bar\Delta}_2^2 + \frac{1}{n}\norm{X\hat\Delta}_2^2 \leq &\lambda(2\normin{\tbeta}_1 - \normin{\bbeta}_1 - \normin{\hbeta}_1)\\
      &+ \frac{1 + 2r}{r}\frac{1}{n} X^\top \varepsilon(\bbeta - \tbeta)\enspace.
    \end{align*}
    % Now the standard machinery can be applied.
    We continue on the event $\normin{({1 + 2r}) X^\top \varepsilon/ rn}_{\infty} \leq \lambda / 2$ as:
    \begin{align*}
      \frac{r + 1}{r}\frac{1}{n}\normin{X\bar\Delta}_2^2 + \frac{1}{n}\norm{X\hat\Delta}_2^2 \leq &\lambda(2\normin{\tbeta}_1 - \normin{\bbeta}_1 - \normin{\hbeta}_1) + \frac{1 + 2r}{r}\frac{1}{n} X^\top \varepsilon(\bbeta - \tbeta)\\
      = & \lambda(\normin{\hat\Delta_S}_1 - \normin{\hat\Delta_{S^c}}_1 + \frac{3}{2}\normin{\bar\Delta_S}_1 - \frac{1}{2}\normin{\bar\Delta_{S^c}}_1)\enspace.
    \end{align*}
    Hence, we have
    \begin{align*}
      2\frac{r + 1}{r}\frac{1}{n}\normin{X\bar\Delta}_2^2 + \frac{2}{n}\norm{X\hat\Delta}_2^2 \leq & \lambda(2\normin{\hat\Delta_S}_1 - 2\normin{\hat\Delta_{S^c}}_1 + 3\normin{\bar\Delta_S}_1 - \normin{\bar\Delta_{S^c}}_1)\enspace.
    \end{align*}
    By similar arguments as in~\Cref{thm:oracle_sign_refitting_lasso} we can use the Restricted Eigenvalue assumption and derive the following sequence of inequalities:
    \begin{align*}
      2\frac{r + 1}{r}\frac{1}{n}\normin{X\bar\Delta}_2^2 + \frac{2}{n}\norm{X\hat\Delta}_2^2 \leq &\lambda(2\normin{\hat\Delta_S}_1 - 2\normin{\hat\Delta_{S^c}}_1 + 3\normin{\bar\Delta_S}_1 - \normin{\bar\Delta_{S^c}}_1)\\
      \leq &2\sqrt{\frac{s}{n}}\frac{\lambda}{\kappa(1, s)}\normin{X\hat\Delta}_2 + 3\sqrt{\frac{s}{n}}\frac{\lambda}{\kappa(3, s)}\normin{X\bar\Delta}_2\\
      \leq &\frac{\lambda^2 s}{\kappa^2(1, s)}+\frac{1}{n}\normin{X\hat\Delta}_2^2 + \frac{9\lambda^2 s}{4\kappa^2(3, s)} + \frac{1}{n}\normin{X\bar\Delta}_2^2\enspace.
    \end{align*}
    Finally,
    \begin{align*}
      \frac{r + 2}{r}\frac{1}{n}\normin{X\bar\Delta}_2^2 + \frac{1}{n}\norm{X\hat\Delta}_2^2
      \leq &\frac{\lambda^2 s}{\kappa^2(1, s)} + \frac{9\lambda^2 s}{4\kappa^2(3, s)}\enspace.
    \end{align*}
\end{proof}

\begin{proof}[Proof of~\Cref{lemma:props_bregman_divergence}]
    The identity $\Breg{\ell_1}^{\rho}({z}, w) = \norm{{z}}_1 - \scalar{{\rho}}{{z}}$ immediately follows from the definition of the Bregman divergence and the fact that $\scalar{\rho}{w} = \normin{w}_1$.
    To prove convexity, we consider $z_1, z_2, w \in \bbR^p$ arbitrary vectors and $t \in [0, 1]$, hence by the definition of the Bregman divergence we can write
    \begin{align*}
            \Breg{\ell_1}^{\rho}(tz_1 + (1-t)z_2, w) = &\normin{tz_1 + (1-t)z_2}_1 - \normin{w}_1 - \scalar{\rho}{tz_1 + (1-t)z_2 - w} \\
            \leq &t(\norm{z_1}_1 - \scalar{\rho}{z_1}) + (t-1)(\norm{z_2}_1 -\scalar{\rho}{z_2}) +  \scalar{\rho}{w} - \normin{w}_1\\
            = &t\Breg{\ell_1}^{\rho}(z_1, w) + (1-t)\Breg{\ell_1}^{\rho}(z_2, w)\enspace,
    \end{align*}
    where we used the triangle inequality and the identity $\Breg{\ell_1}^{\rho}({z}, w) = \norm{{z}}_1 - \scalar{{\rho}}{{z}}$.
    The bound $0 \leq \Breg{\ell_1}^{\rho}({z}, w) \leq 2\norm{{z}}_1$, follows from H\"older's inequality and the fact that $\normin{\rho}_{\infty} \leq 1$.
    Separability $\Breg{\ell_1}^{\rho}({z}, w) = \sum_{i = 1}^p |z_i| - z_i\rho_i  = \sum_{i = 1}^p \Breg{\ell_1}^{\rho_i}(z_i, w_i)$ is a consequence of the separability of the $\ell_1$-norm.
    Finally, the last property follows from the definition of the subgradient $\rho$ and identity $\Breg{\ell_1}^{\rho}({z}, w) = \norm{{z}}_1 - \scalar{{\rho}}{{z}}$.
\end{proof}

\begin{proof}[Proof of~\Cref{prop:bregman_as_Lasso}]
    One can write the following sequence of equalities
    \begin{align*}
        \frac{1}{2n}\norm{y - X\beta}_2^2 + \lambda_2\Breg{\ell_1}^{\hrho^{\lambda_1}}(\beta, \hbeta^{\lambda_1}) &=  \frac{1}{2n}\norm{y}_2^2 - \frac{1}{n}\scalar{X^\top y}{\beta} + \frac{1}{2n}\norm{X\beta}_2^2 + \lambda_2\Breg{\ell_1}^{\hrho^{\lambda_1}}(\beta, \hbeta^{\lambda_1}) \\
        &=\frac{1}{2n}\norm{y}_2^2 - \frac{1}{n}\scalar{X^\top \bar y}{\beta} + \frac{1}{2n}\norm{X\beta}_2^2 + \lambda_2\normin{\beta}_1\enspace,
    \end{align*}
    where to get the last equality we used Property~\ref{prop:breg_via_subgrad} and the choice of subgradient given in~\Cref{eq:refitting_subgradient}.
    Noticing the following relation
    \begin{align}
      \frac{1}{2n}\norm{y}_2^2 - \frac{1}{n}\scalar{X^\top \bar y}{\beta} + \frac{1}{2n}\norm{X\beta}_2^2  = \frac{1}{2n}\norm{\bar y - X\beta}_2^2
      + \frac{1}{2n}\norm{y}_2^2 - \frac{1}{2n}\norm{\bar y}_2^2\nonumber\enspace,
    \end{align}
    and using the fact that $y, \bar y$ are independent of $\beta$, we conclude.
\end{proof}

\begin{proof}[Proof of~\Cref{lem:bregman_is_refitting}]
    The proof of this lemma relies on~\Cref{eq:bregman_iteration_linear}, hence we can write
    \begin{align*}
        \frac{1}{2n}\normin{y - X\hbeta^{\lambda_1, \lambda_2}}_2^2 + \lambda_2\Breg{\ell_1}^{\hrho^{\lambda_1}}(\hbeta^{\lambda_1, \lambda_2}, \hbeta^{\lambda_1}) \leq \frac{1}{2n}\normin{y - X\hbeta^{\lambda_1}}_2^2\enspace,
    \end{align*}
    so that we get the result since Bregman divergences are non negative.
\end{proof}

\begin{proof}[Proof of~\Cref{prop:orthogonal_subgradien_same_sign}]
   Since $\hbeta^{\lambda_1} = \ST(y, \lambda_1)$, we can equivalently rewrite~\cref{eq:orthogonal_subgradient} as
    \begin{align*}
        \forall i \in [p], \quad \hrho^{\lambda_1}_i = \begin{cases}
                                                            \sign(y_i), \quad &|y_i| \geq \lambda_1\\
                                                            \frac{y_i}{\lambda_1},\quad &|y_i| < \lambda_1
                                                     \end{cases}\enspace.
    \end{align*}
\vspace{-0.4cm}
\end{proof}

\begin{proof}[Proof of~\Cref{prop:solution_of_refitting_with_orthogonal design}]
    We have the following sequence of equalities
    \begin{align*}
        \frac{1}{2}\norm{y - \beta}_2^2 + \lambda_2\Big(\norm{\beta}_1 - \scalar{\hrho^{\lambda_1}}{\beta}\Big) &= \frac{\norm{y}_2^2}{2} - \scalar{y + \lambda_2\hrho^{\lambda_1}}{\beta} + \frac{\norm{\beta}_2^2}{2} + \lambda_2\norm{\beta}_1\\
        &=\frac{1}{2}\norm{y + \lambda_2\hrho^{\lambda_1} - \beta}_2^2 + \lambda_2\norm{\beta}_1 + \frac{\norm{y}_2^2}{2} - \frac{\norm{y - \lambda_2\hrho^{\lambda_1}}_2^2}{2}\enspace.
    \end{align*}
    Since, $y$ and $\hrho^{\lambda_1}$ do not depend on $\beta$ we can write
    \begin{align}
        \label{eq:orthogonal_refitting_alternative}
        \hbeta^{\lambda_1, \lambda_2} = \argmin_{\beta \in \bbR^p} \frac{1}{2}\norm{y + \lambda_2\hrho^{\lambda_1} - \beta}_2^2 + \lambda_2\norm{\beta}_1\enspace,
    \end{align}
    which reduces us to the case of Lasso with orthogonal design and we get the desired result.
\end{proof}
\begin{proof}[Proof of~\Cref{prop:orthogonal_refitting_is_mcp}]
        First we notice that the optimization problem in~\Cref{eq:orthogonal_refitting} is separable \ie can be solved component-wise and that $\mu\gamma = \lambda_H(1 + \frac{\lambda_1}{\lambda_2}) = \lambda_1$ and $\frac{\gamma}{\gamma - 1}
        % = \frac{1 + \tfrac{\lambda_1}{\lambda_2}}{(1 + \tfrac{\lambda_1}{\lambda_2}) - 1}
        = 1 + \frac{\lambda_2}{\lambda_1}$. For each $j \in [p]$ the solution of the refitting step~\cref{eq:bregman_iteration_linear} is given by
    \begin{equation*}
        \hbeta^{\lambda_1, \lambda_2}_j = \sign(y_j)\big(|y_j + \frac{\lambda_2}{\lambda_1}(y_j - \hbeta^{\lambda_1}_j)| - \lambda_2\big)_+\enspace,
    \end{equation*}
    where we used the fact that $\sign(y_j) = \sign(y_j + \lambda_2\hrho^{\lambda_1}_j)$, a result that follows from~\Cref{prop:orthogonal_subgradien_same_sign}. We consider the following cases:
    \begin{itemize}
        \item $y_j > 0, y_j \geq \lambda_1$, hence $\hbeta^{\lambda_1}_j = y_j - \lambda_1$ and
            \begin{equation*}
                \sign(y_j)\big(|y_j + \frac{\lambda_2}{\lambda_1}(y_j - \hbeta^{\lambda_1}_j)| - \lambda_2\big)_+ = \big(y_j + \frac{\lambda_2}{\lambda_1}(y_j - y_j + \lambda_1) - \lambda_2\big)_+ = y_j\enspace,
            \end{equation*}
        which holds for all positive values of $\lambda_2$.
        \item $y_j > 0, y_j < \lambda_1$, hence $\hbeta^{\lambda_1}_j = 0$ and
            \begin{align*}
                \sign(y_j)\big(|y_j + \frac{\lambda_2}{\lambda_1}(y_j - \hbeta^{\lambda_1}_j)| - \lambda_2\big)_+ &= \big(y_j + \frac{\lambda_2}{\lambda_1}y_j - \lambda_2\big)_+
                = \big(y_j\big(1 + \frac{\lambda_2}{\lambda_1}\big) - \lambda_2\big)_+\\
                &= \big(1 + \frac{\lambda_2}{\lambda_1}\big)\big(y_j - \lambda_H\big)_+\enspace.
            \end{align*}
    \end{itemize}
    The case of negative $y_j$ is proved in the same manner.
\end{proof}
\begin{proof}[Proof of~\Cref{prop:breg_isfirw}]
    We prove this result component-wise (for arbitrary component $j \in [p]$) and additionally assume that $y_j > 0$, the case of negative $y_j$ being similar.
    The statement clearly holds for $k = 2$, since it is sufficient to use \Cref{prop:orthogonal_refitting_is_mcp} with $\lambda_1 = \lambda_2$.
    We assume that the statement holds up to $k > 2$, hence using previous result we can write
    \begin{equation*}
      \hbeta_{k+1, j} = \ST(y_j + \lambda\hrho_{k,j}, \lambda)\enspace,
    \end{equation*}
    reminding $\rho_0=0$, with the inductive assumption and the definition of the subgradient in~\cref{eq:old_bregman_iteration} we have
    \begin{equation*}
        \hrho_{k, j} = \frac{1}{\lambda}\Big(y_j - \ST(y_j, \lambda) + \sum\limits_{m = 2}^{k}(y_j - \MCP(y_j, \tfrac{\lambda}{m}, \tfrac{m}{m-1}))\Big)\enspace.
    \end{equation*}
    \begin{itemize}
      \item  If $\tfrac{\lambda}{k} < y_j \leq \tfrac{\lambda}{k-1}$, the firm-threshold definition implies that for all $m < k$ iterations $\hbeta_{m,j} = 0$, hence
    \begin{align*}
        \hrho_{k, j} &= \frac{1}{\lambda}\Big(y_j - \ST(y_j, \lambda) + \sum\limits_{m = 2}^{k}(y_j - m\ST(y_j, \tfrac{\lambda}{m}))\Big)\\
         &= \frac{1}{\lambda}\Big(y_j + (k-2)y_j + (y_j - k(y_j - \tfrac{\lambda}{k}))\Big)
         = 1\enspace.
    \end{align*}
    Therefore, for $\tfrac{\lambda}{k} < y_j \leq \tfrac{\lambda}{k-1}$, we have $\hbeta_{k+1} = \ST(y_j + \lambda, \lambda) = y_j = \MCP(y, \tfrac{\lambda}{k+1}, \tfrac{k+1}{k})$.

    \item If $0 < y_j \leq \tfrac{\lambda}{k}$ it means that for all $m \leq k$ iterations $\hbeta_{m,j} = 0$, and hence
        $\hrho_{k, j} = \frac{1}{\lambda}\Big((y_j - \ST(y_j, \lambda)) + \sum\limits_{m = 2}^{k}(y_j - m\ST(y_j, \tfrac{\lambda}{m}))\Big) = \frac{1}{\lambda}\Big(y_j + (k-1)y_j)\Big) = \frac{k}{\lambda}y_j$.
    Therefore, when $0 < y_j \leq \tfrac{\lambda}{k}$, we have $\hbeta_{k+1} = \ST(y_j + ky_j, \lambda) = (y_j + ky_j - \lambda)_+ = (k+1)(y_j - \tfrac{\lambda}{k+1})_+ = \MCP(y, \tfrac{\lambda}{k+1}, \tfrac{k+1}{k})$.

    \item If $\tfrac{\lambda}{m^*} < y_j \leq \tfrac{\lambda}{m^*-1}$, for some $2 \leq m^* < k$ we know by the induction assumption that for all $m > m^*$ the estimation is given by $\hbeta_{m, j} = y_j$, for all $m < m^*$ the estimation is given by $\hbeta_{m, j} = 0$ and for $m = m^*$ we have $\hbeta_{m^*, j} = m^*(y - \tfrac{\lambda}{m^*})$, hence we can write
        $\hrho_{k, j} = \big(y_j - \ST(y_j, \lambda) + \sum\limits_{m = 2}^{k}(y_j - m\ST(y_j, \tfrac{\lambda}{m}))\big)/\lambda = \big(y_j + \sum\limits_{m = 2}^{m^*}y_j - m^*(y_j - \tfrac{\lambda}{m^*})\big) / \lambda= 1\enspace.$
    \end{itemize}
  Comparing with the firm-threshold definition provide the expected result.
\end{proof}

\end{document}